\newtheorem{theorem}{Theorem}[section]
\newtheorem{lemma}[theorem]{Lemma}
\newtheorem{cor}[theorem]{Corollary}
\newtheorem{prop}[theorem]{Proposition}
\theoremstyle{definition}
\theoremstyle{remark}
\newtheorem{remark}[theorem]{Remark}
\numberwithin{equation}{section}
\newcommand\nutwid{\overset {\text{\lower 3pt\hbox{$\sim$}}}\nu}
\newcommand{\abs}[1]{\lvert#1\rvert}
\newcommand{\uhp}{\mathscr{H}}  
\newcommand{\sgn}{\textnormal{sgn}}
\newcommand\leg[2]{\genfrac{(}{)}{}{}{#1}{#2}} 
\newcommand\abcdMAT{\begin{pmatrix} a & b \\ c & d \end{pmatrix}}
\newcommand\MAT[4]{\begin{pmatrix} {#1} & {#2} \\ {#3}  & {#4} \end{pmatrix}}
\newcommand\Z{\mathbb{Z}}
\newcommand\Q{\mathbb{Q}}
\newcommand\R{\mathbb{R}}
\newcommand\C{\mathbb{C}}
\newcommand{\beqs}{\begin{equation*}}
\newcommand{\eeqs}{\end{equation*}}
\newcommand{\beq}{\begin{equation}}
\newcommand{\eeq}{\end{equation}}
\DeclareMathOperator{\IM}{Im}
\DeclareMathOperator{\RE}{Re}
\DeclareMathOperator{\ord}{ord}
\newcommand\rmi{i}
\newcommand\rme{e}
\newcommand\slZ{\mathrm{SL}_2(\mathbb{Z})}
\newcommand\tbtmat[4]{\left(\begin{smallmatrix}{#1} & {#2} \\ {#3} & {#4}\end{smallmatrix}\right)}
\renewcommand\uppi\pi
\begin{document}

\title[Transformation properties of Andrews-Beck $NT$ functions]{Transformation properties of Andrews-Beck $NT$ functions and generalized Appell-Lerch series}

\author{Rong Chen}
\address{Department of Mathematics, Shanghai Normal University, Shanghai, People's Republic of China}
\email{rchen@shnu.edu.cn}

\author{Xiao-Jie Zhu}
\address{School of Mathematical Sciences,
Key Laboratory of MEA(Ministry of Education) \& Shanghai Key Laboratory of PMMP,
East China Normal University}
\email{zhuxiaojiemath@outlook.com}

\subjclass[2020]{Primary 11P84; Secondary 05A17, 11F11, 11F30, 11F37, 11P83}

\keywords{Andrews-Beck $NT$ function, Dyson's rank function, partitions, non-holomorphic modular forms, Appell-Lerch series}

\begin{abstract}
In 2021, Andrews mentioned that George Beck introduced a partition statistic $NT(r,m,n)$ which is related to Dyson's rank statistic. Motivated by Andrews's work, scholars have established a number of congruences and identities involving $NT(r,m,n)$. In this paper, we strengthen and extend a recent work of Mao on the transformation properties of the $NT$ function and provide an analogy of Hickerson and Mortenson's work on the rank function. As an application, we demonstrate how one can deduce from our results many identities involving $NT(r,m,n)$ and another crank-analog statistic $M_\omega(r,m,n)$. As a related result, some new properties of generalized Appell-Lerch series are given.
\end{abstract}

\maketitle

\section{Introduction}
\subsection{Background and the main results}
A partition of a positive integer $n$ is defined as a sequence of positive integers in non-increasing order that sums to $n$. The number of all partitions of $n$ is denoted by $p(n)$. The following distinguished congruences were discovered by Ramanujan in \cite{Ra-19}:
\begin{align}
p(5n+4)\equiv 0 &\pmod 5,\label{1.1}\\
p(7n+5)\equiv 0 &\pmod 7,\label{1.2}\\
p(11n+6)\equiv 0 &\pmod {11}\label{1.3},
\end{align}
where $n$ is any nonnegative integer.

In an effort to provide purely combinatorial interpretations of Ramanujan's famous congruences \eqref{1.1}-\eqref{1.3}, two important partition statistics of ordinary partitions, rank and crank, were introduced by Dyson, Andrews, and Garvan. In 1944, Dyson \cite{Dyson-1944} defined the rank of a partition to be the largest part of the partition minus the number of parts. Dyson also conjectured that for $k=5,7$,
$$
N(r,k,kn-s_k)=\frac{p(kn-s_k)}{k},
$$
where $N(r,k,n)$ counts the number of partitions of $n$ with rank congruent to $r$ modulo $k$ and $s_k$ denotes $s_k=(k^2-1)/24$ throughout the paper. In 1954, Dyson's conjectures were proved by Atkin and Swinnerton-Dyer \cite{Swinnerton-Dyer-1954}. Therefore, the ranks of partitions provide purely combinatorial descriptions of Ramanujan's congruences \eqref{1.1} and \eqref{1.2}. Unfortunately, Dyson's rank failed to account for Ramanujan's third congruence \eqref{1.3} combinatorially. Dyson conjectured the existence of an unknown partition statistic, which he whimsically called ``the crank'', to explain Ramanujan's third congruence modulo 11. The crank was found by Andrews and Garvan \cite{Andrews-1988} who defined it as the largest part, if the partition has no ones, and otherwise as the difference between the number of parts larger than the number of ones and the number of ones.
In 1987, Garvan \cite{Garvan-1988} proved that for $k=5,7,11$,
$$
M(r,k,kn-s_k)=\frac{p(kn-s_k)}{k},
$$
where $M(r,k,n)$ counts the number of partitions of $n$ with crank congruent to $r$ modulo $k$. Garvan's results imply that the crank of partitions provides purely combinatorial descriptions of Ramanujan's three congruences \eqref{1.1}-\eqref{1.3}.

The transformation properties of the rank function was studied by Bringmann, Ono and Rhoades \cite{BrOnRh-08} and later by many authors such as Ahlgren, Garvan, Hickerson, Mortenson and Treneer \cite{AhTr-08}, \cite{Ga-19}, \cite{HiMo-17}. For example, let
$$
D(a,M):=\sum_{n=0}^\infty\left(N(a,M,n)-\frac{p(n)}{M}\right)q^n.
$$
Hickerson and Mortenson \cite[Theorem 4.1]{HiMo-17} state that
\beq
\label{hm}
D(a,M)=d(a,M)+T_{a,M},
\eeq
where $d(a,M)$ are some given Appell-Lerch series (under $M$-dissection when $\gcd(M,6)=1$) and $T_{a,M}$ is a theta function.

In 2021, Andrews \cite{Andrews-2021} mentioned that George Beck had introduced the partition statistics $NT(r,m,n)$ and $M_\omega(r,m,n)$, which count the total number of parts in the partitions of $n$ with rank congruent to $r$ modulo $m$, and the total number of ones in the partitions of $n$ with crank congruent to $r$ modulo $m$, respectively. Namely,
\begin{align*}
NT(r,m,n)=\sum_{\substack {\pi\vdash n,\\ \text{rank}(\pi)\equiv r \pmod m}} \sharp(\pi)
\end{align*}
and
\begin{align*}
M_{\omega}(r,m,n)=\sum_{\substack{\pi\vdash n,\\ \text{crank}(\pi)\equiv r \pmod m}} \omega(\pi),
\end{align*}
where $\sharp(\pi)$ denotes the number of parts of $\pi$, and $\omega(\pi)$ the number of ones. The following Andrews-Beck type congruence, conjectured by Beck, was proved by Andrews \cite{Andrews-2021}:
$$
\sum_{m=1}^4mNT(m,5,5n+1)\equiv\sum_{m=1}^4mNT(m,5,5n+4)\equiv 0\pmod 5.
$$
Motivated by Andrews' works, variants of Andrews-Beck type congruence were studied by many authors. For example, Chern \cite{Ch-22} found and proved a list of this type congruences modulo 5,7,11, and 13. Moreover, identities and equalities involving these new partition statistics have been established by many authors through $q$-series approaches. For example, Mao \cite{Mao-22} obtained identities involving $NT(s,p,pn+d)$ such as
\begin{align}
\label{ex1}
\sum_{n=0}^\infty &\big(NT(1,7,7n+5)-NT(6,7,7n+5)\\
\nonumber
&+3NT(2,7,7n+5)-3NT(5,7,7n+5\big)q^n=-7\frac{(q^7;q^7)_\infty(q^3,q^4;q^7)_\infty}{(q,q^6;q^7)_\infty(q^2,q^5;q^7)_\infty^2}.
\end{align}
which is analogue of the well-known identity
$$
\sum_{n=1}^\infty p(5n+4)q^n=5\frac{(q^5;q^5)^5}{(q;q)^6},
$$
and also indicate the congruence
\begin{align*}
NT(1,7,7n+5)-NT(6,7,7n+5)+3NT(2,7,7n+5)-3NT(5,7,7n+5)\equiv 0 \pmod{7}.
\end{align*}
Here and throughout the paper
$$
(a;q)_n:=\prod_{k=0}^{n-1}(1-aq^k),
$$
$$
(a;q)_\infty:=\prod_{k=0}^{\infty}(1-aq^k),
$$
$$
(a_1,a_2,\cdots,a_n;q)_\infty:=\prod_{k=1}^{n}(a_k;q)_\infty,
$$
and for convenience
$$
(q)_\infty:=(q;q)_\infty.
$$
Identities involving $NT$ and $M_\omega$ are also established by many authors. For example, Jin, Liu and Xia \cite{JLX-22} found the relations such as
\beq
\label{ex2}
NT(2,5,5n+1)-NT(3,5,5n+1)=M_\omega(2,5,5n+1)-M_\omega(3,5,5n+1).
\eeq
In a recent work with Chen and Yin, the first author \cite{ccy} found the identity for $p=11$,
\beq
\label{ex3}
\sum_{m=1}^{5}m\left[M_{\omega}(m,11,11n+6)-M_{\omega}(11-m,11,11n+6)\right]=0.
\eeq

As stated in \cite{Andrews-2021}, the fact $N(s,k,n)=N(k-s,k,n)$ is generally false if $N$ is replaced by $NT$. Recently, Mao \cite{Mao-24} established the modular approach for analyzing the difference $NT(s,k,n)-NT(k-s,k,n)$ and proved additional identities of the form shown in \eqref{ex1}-\eqref{ex3} by applying the theory of mock modular forms. Mao \cite[Lemma 2.1]{Mao-24} represented the difference $NT(s,k,n)-NT(k-s,k,n)$ as the rank function and the (generalized) Appell-Lerch sum.

\begin{align}
\label{maoid}
&\sum_{n=0}^\infty\left(NT(s,k,n)-NT(k-s,k,n)\right)q^n\\
\nonumber
=&\sum_{j=1}^{k-1}\frac{\zeta_{k}^{j(s-1)}(1-\zeta_{k}^j)}{k}\left\{\frac{1}{2}+\frac{\zeta_{k}^{j/2}}{2\pi i (q)_\infty}\cdot \frac{\partial}{\partial u}\bigg|_{u=0} A_3\left(u-\frac{j}{k};\tau\right)\right\}\\
\nonumber
&-\sum_{j=1}^{k-1}\frac{\zeta_{k}^{js}(1+\zeta_{k}^j)\mathcal{R}\left(\zeta_{k}^j;q\right)}{2k(1-\zeta_{k}^j)}.
\end{align}

By analogy, the difference $M_\omega(m,k,n)-M_\omega(k-m,k,n)$ can be expressed as \cite[Lemma 2.2]{Mao-24}

\begin{align}
\label{maoidm}
&\sum_{n=0}^\infty\left(M_\omega(s,k,n)-M_\omega(k-s,k,n)\right)q^n\\
\nonumber
=&\sum_{j=1}^{k-1}\frac{\zeta_{k}^{j(s-1/2)}(1-\zeta_{k}^j)}{2k\pi i(q)_\infty}\cdot \frac{\partial}{\partial u}\bigg|_{u=0} A_1\left(u-\frac{j}{k};\tau\right)\\
\nonumber
&-\sum_{j=1}^{k-1}\frac{\zeta_{k}^{js}(1+\zeta_{k}^j)\mathcal{C}\left(\zeta_{k}^j;q\right)}{2k(1-\zeta_{k}^j)}.
\end{align}

The function $\mathcal{R}(z;q)$ (resp. $\mathcal{C}(z;q)$) is the rank (resp. crank) function,
$$
\mathcal{R}(z;q):=\sum_{n=0}^\infty\sum_{m=-\infty}^\infty N(m,n)z^mq^n,
$$
$$
\mathcal{C}(z;q):=\sum_{n=0}^\infty\sum_{m=-\infty}^\infty M(m,n)z^mq^n,
$$
and $A_\ell(u,v;\tau)$ is the (generalized) Appell-Lerch series
\beq
\label{eq:AppellLerch}
A_\ell(u,v;\tau):=a^{\ell/2}\sum_{n\in \mathbb{Z}}\frac{(-1)^{\ell n}q^{\ell n(n+1)/2}b^n}{1-aq^n},
\eeq
where $a:=e^{2\pi i u}$, $b:=e^{2\pi i v}$ and $q:=e^{2\pi i \tau}$. We also write $A_\ell(u;\tau)$:=$A_\ell(u,0;\tau)$.

In this paper, we start from \eqref{maoid} to find an analog of \eqref{hm}. Let $p\geq 5$ be prime and
$$
D_{NT}(s,p):=\sum_{n=0}^\infty\left(NT(s,p,n)-NT(p-s,p,n)\right)q^n-\sum_{r=1}^{p-1}\frac{p-2r}{2p}D(r-s,p),
$$
where we point out that
$$
\sum_{r=1}^{p-1}\frac{p-2r}{2p}D(r-s,p)=\sum_{r=1}^{p-1}\frac{p-2r}{2p}\sum_{n=0}^\infty N(r-s,p,n)q^n.
$$
Our main theorem (see Theorem \ref{mainN} below) shows that
$$
D_{NT}(s,p)=d_{NT}(s,p)+t_{s,p},
$$
where $d_{NT}(s,p)$ represents some given Appell-Lerch series and their derivatives, and $t_{s,p}$ is ``modular'' (weight $3/2$). For example, we have
$$
D_{NT}(1,5)=q^8L_5(2;q^5)-q^5L_5(4;q^5)+t_{1,5},
$$
where for prime $p$ and integer $0<v<p$, we define
\begin{align}
\label{lpp}
L_p(v;q):=&L_p(v)\\
\nonumber
:=&l_p(v)+\frac{(-1)^v}{(q^p;q^p)_\infty}\left(p\sum_{n=-\infty}^\infty\frac{(-1)^nq^{3pn(n+1)/2}}{(1-q^{pn+v})^2}
+\left(\frac{p}{2}-3v\right)\sum_{n=-\infty}^\infty\frac{(-1)^nq^{3pn(n+1)/2}}{1-q^{pn+v}}\right),
\end{align}
with $l_p(v)=(-1)^v\frac{p-6v}{2}q^{-v}$ if $0<v<\frac{p}{6}$, $l_p(v)=(-1)^v\frac{5p-6v}{2}q^{v-p}$ if $\frac{5p}{6}<v<p$ and $l_p(v)=0$ else. For $v=0$ we define

\begin{align}
\label{lp0}
L_p(0)=\frac{p}{(q^p;q^p)_\infty}\left(\sum_{n\neq 0} \frac{(-1)^nq^{3pn(n+1)/2}}{(1-q^{pn})^2}
+3\sum_{n=1}^\infty \frac{nq^{pn}}{1-q^{pn}}-\frac{1}{12}\right).
\end{align}
We remark that the function $L_p(0)$ is close to the Andrews $spt$ function
$$
\sum_{n=0}^\infty spt(n)q^n=\frac{1}{(q;q)_\infty}\left(\sum_{n\neq 0} \frac{(-1)^nq^{3n(n+1)/2}}{(1-q^n)^2}
+\sum_{n=1}^\infty \frac{nq^{pn}}{1-q^{pn}}\right),
$$
which is related to a weight 3/2 harmonic Maass form studied by Bringmann \cite{Bri08}, also Bringmann, Folsom and Ono \cite{BFO09}.

Furthermore, we find that all $p$-dissections of the part $t_{s,p}$ are modular functions on $\Gamma_1(p)$ when multiplied by certain eta-quotients and give the expression of $t_{s,p}$ (under $p$-dissection) by the modular function approach for $p=5,7$ (see Appendices \ref{apx:A} and \ref{apx:B}).

Then we show that Theorems 3.6 and 3.7 imply identities such as \eqref{ex1}--\eqref{ex3}.

\subsection{Appell-Lerch series of level $\ell$}
\label{subsec:Appell-Lerch series}
In proving the main theorems (Theorems \ref{mainN} and \ref{mainM} below) and finding out the exact forms of $N_p(s,k)$ and $M_p(s,k)$ in these theorems, we rely heavily on the properties of the modular completions $\widehat{A}_\ell(u;\tau)$ of generalized Appell-Lerch series $A_\ell(u;\tau)$. The completion was obtained by Zwegers \cite{Zw-19} which extends a previous important result obtained in the same author's Ph.D. Thesis. Zwegers also gave the transformation equations of these functions as real analytic Jacobi forms. In particular, $\widehat{A}_\ell(u;\tau)$ is a real analytic Jacobi form of weight $1$ and index $-\ell/2$; c.f. \cite[Remark 4]{Zw-19}. The positive integer $\ell$ is called the level.

The functions $A_\ell(u;\tau)$ appear in several branches of mathematics, e.g., in representation theory of affine Lie superalgebras, conformal field theory, the theory of K3 surfaces, the study of vector bundles on elliptic curves; c.f. \cite{KW-01}, \cite{STT-05}, \cite{MO-09}, \cite{Pol-01}. More related to our topic, generalized Appell-Lerch series play an important role in the study of Ramanujan's mock theta functions; c.f. \cite{BO-10} and \cite{Zag-09}.

Mao \cite{Mao-24} used $A_1(u;\tau)$ and $A_3(u;\tau)$ to derive some of his theorems (see \eqref{maoid} and \eqref{maoidm} above). In detail, Mao obtained the transformation equations of $\frac{\partial}{\partial u}|_{u=0} \widehat{A}_\ell\left(u-\frac{j}{k};\tau\right)$ and then proved the holomorphicity of the ``holomorphic parts'' of these functions at cusps for the level $\ell=1$ and $3$. These properties are key to the proof of his results.

In this paper, we obtain some new results on $\widehat{A}_\ell(u;\tau)$ which in the cases $\ell=1$ and $3$ will be used in the proof of Theorems \ref{mainN} and \ref{mainM} and in determining how many terms one should check when using a computer algebra system to prove identities on $N_p(s,k)$ and $M_p(s,k)$. These results are more  meticulous and comprehensive than Mao's and may be of independent interest due to the role of $\widehat{A}_\ell(u;\tau)$ in mathematics. We list the results as follows.
\begin{itemize}
    \item We obtain the $q$-series expansions of both the holomorphic part and nonholomorphic part of $\frac{\partial}{\partial u}\vert_{u=0}\widehat{A}_\ell(u-x;\tau)$ at any cusp, where $x$ is a positive nonintegral rational number. See Theorem \ref{thm:partAlhatCusp} below.
    \item We give an explicit formula for the order of the holomorphic part of $\frac{\partial}{\partial u}\vert_{u=0}\widehat{A}_\ell(u-x;\tau)$ at any cusp. See Corollary \ref{coro:orderPartialAlhat} below.
    \item We establish the modularity of certain linear combination of $\widehat{A}_\ell$ and its derivative (with respect to $u$) and give an explicit formula for the orders of this function at cusps. See Proposition \ref{prop:glxModular} and Corollary \ref{cor:glxOrder} below.
    \item We show that the image of $\frac{\partial}{\partial u}\vert_{u=0}\widehat{A}_\ell(u-x;\tau)$ under an operator $U_{p,k}$, multiplied by some generalized Dedekind eta functions, is a real analytic modular form on the congruence subgroup $\Gamma_1(p)$ with trivial multiplier system. See Theorem \ref{thm:AppellGEtaModular} below. This exhibits a technique of how one can multiply a (possibly nonholomorphic) modular form by some generalized Dedekind eta functions to reduce the multiplier system to a trivial one.
\end{itemize}

In the proofs of the above results, we freely use the theory of Jacobi forms for which the reader may consult \cite{EZ-85}. Finally, we emphasize that the real analytic modular forms mentioned above, in general, are not harmonic Maass forms as the shapes of their nonholomorphic parts show.

\subsection{Organization of the paper and notations}

This paper is organized as follows. In Section \ref{sec:Modularity}, we derive some properties of the generalized Appell-Lerch series $A_\ell(u;\tau)$ as explained in Section \ref{subsec:Appell-Lerch series}. In Section \ref{sec:Generating functions}, we find out the functions to cancel the nonholomophic part of each $p$-dissection of the $NT$ function and prove the main theorem. In Section \ref{sec:Examples and the algorithm}, we calculate the lower bounds of the orders at all cusps for $N_p(s,k)$ and $M_p(s,k)$ in Theorems~\ref{mainN} and~\ref{mainM} and then use the valence formula to express $N_p(s,k)$ and $M_p(s,k)$ as linear combinations of certain generalized Dedekind eta quotients. The resulted expressions are collected in Appendices \ref{apx:A} and \ref{apx:B}.

We list some common notations here. The symbol $\uhp$ denotes the upper half plane $\{z\in\C\colon\IM z>0\}$ and $\tau$ is tacitly assumed to take values in $\uhp$. The symbol $q$ always means $e^{2\pi i\tau}$. If $a, b$ are two numbers, the notation $\delta_{a,b}$ refers to the Kronecker $\delta$, that is, $\delta_{a,b}=1$ if and only if $a=b$. Let $x$ be a rational number; we define $\delta_x=1$ if $x\in\Z$ and $\delta_x=0$ otherwise; define $\sgn(x)=1$ or $-1$ according to $x>0$ or $x<0$ respectively and set $\sgn(0)=0$. For integers $a$ and $b$, $(a,b)$ means the greatest common divisor. The notations $\zeta_k=e^{\frac{2\pi i}{k}}$ and $s_k=\frac{k^2-1}{24}$ have been mentioned in above. Let $p\geq5$ be a prime; we set $\chi_{12}(p)=1$ if $p\equiv1,11\pmod{12}$ and $\chi_{12}(p)=-1$ if $p\equiv5,7\pmod{12}$. For a complex-valued real-differentiable function $f(u)$ of a complex variable $u$, define $\frac{\partial}{\partial u}f=\frac{1}{2}\left(\frac{\partial f}{\partial \RE u}-\rmi\frac{\partial f}{\partial \IM u}\right)$. If $f$ is holomorphic, this coincides with the complex derivative.

The slash operators $f\vert_k\gamma$ acting on modular forms or analogues $f$ are defined preceding Theorem \ref{thm:partAlhatCusp} and after Corollary \ref{cor:glxOrder}. The slash operators $\phi\vert_k\gamma$ acting on Jacobi forms or analogues $\phi$ are used in some proofs and their definition can be found in \cite[Theorem 1.4]{EZ-85}. The order of a modular form or its analogue at a cusp, $\ord\nolimits_{\rmi\infty}f\vert_r\gamma$ or $\ord_{a/c}f$, is defined preceding the statement of and after the proof of Corollary \ref{coro:orderPartialAlhat}. The divisor $\mathrm{div}_{a/c}f$ or $\mathrm{div}_{\tau}f$ of a meromorphic modular form $f$ at $a/c$ or $\tau$, each of which is a term in the left-hand side of the valence formula, is defined in the beginning part of Section \ref{subsec:Behaviour at cusps}.

\section{Modularity}
\label{sec:Modularity}

\subsection{Appell-Lerch series}

Following Zwegers \cite{Zw-19}, let
\begin{equation}
\label{eq:defAlhat}
\widehat{A}_\ell(u;\tau):=A_\ell (u;\tau)+\frac{i}{2}\sum_{m=0}^{\ell-1}e^{2\pi imu}\theta\left(m\tau+\frac{\ell-1}{2};\ell\tau\right) R\left(\ell u-m\tau-\frac{\ell-1}{2};\ell \tau\right),
\end{equation}
where $\tau\in\uhp$, $u\in\C\setminus(\Z\tau\oplus\Z)$,
\begin{align}
\theta(z;\tau):=&\sum_{\nu \in \frac{1}{2}+\mathbb{Z}}e^{\pi i \nu^2 \tau+2\pi i \nu (z+\frac{1}{2})}\notag\\
=&-iq^{\frac{1}{8}}e^{-\pi i z}\prod_{n=1}^{\infty}(1-q^n)(1-e^{2\pi iz}q^{n-1})(1-e^{-2\pi i z}q^n),\label{eq:triProd}
\end{align}
and
\begin{align*}
R(u;\tau):=\sum_{\nu \in \frac{1}{2}+\mathbb{Z}}&\left\{\sgn(v)-E\left((\nu+\frac{\mathrm{Im}(u)}{\mathrm{Im}(\tau)})\sqrt{2\mathrm{Im}(\tau)}\right)\right\}\\
&\times (-1)^{\nu -\frac{1}{2}}q^{\frac{-\nu^2}{2}}e^{-2\pi i \nu u},
\end{align*}
with
$$
E(z):=2\int_{0}^z e^{-\pi u^2} \mathrm{d}u=\sgn(z)(1-\beta(z^2)) \quad (z\in \mathbb{R}),
$$
$$
\beta(x):=\int_x^\infty u^{-\frac{1}{2}}e^{-\pi u} \mathrm{d}u \quad (x\in \mathbb{R}_{\geq 0}).
$$
One can easily verify that
$$
\widehat{A}_1(u;\tau)=A_1(u;\tau),
$$
and
\beq
\label{a3m}
\widehat{A}_3(u;\tau)=A_3(u;\tau)+\frac{i}{2}\sum_{m=1}^2e^{2\pi i m u}\theta(m\tau;3\tau)R(3u-m\tau;3\tau).
\eeq

Zwegers \cite[Theorem 4]{Zw-19} and Mao \cite[Theorem 2.4]{Mao-24} found the transformation formulas for these $\widehat{A}_\ell$ and their derivatives which we recall here. For $m,n\in \mathbb{Z}$,
$$
\widehat{A}_\ell(u+n\tau+m;\tau)=(-1)^{\ell (n+m)}e^{2\pi i\ell nu} q^{\ell n^2/2}\widehat{A}_\ell(u;\tau).
$$
For $\abcdMAT \in \mathrm{SL}_2(\mathbb{Z})$,
\beq
\label{Alt}
\widehat{A}_\ell\left(\frac{u}{c\tau+d};\frac{a\tau+b}{c\tau+d}\right)=(c\tau+d)e^{\frac{-\pi i c \ell u^2}{c\tau+d}}\widehat{A}_\ell(u;\tau).
\eeq
For $\abcdMAT \in \mathrm{SL}_2(\mathbb{Z})$ and $j\not\equiv 0 \pmod{k}$,
\begin{align*}
\frac{\partial}{\partial u}\bigg|_{u=0} \widehat{A}_\ell\left(u-\frac{j}{k};\frac{a\tau+b}{c\tau+d}\right)
=&\frac{2j\ell c \pi i(c\tau+d)^2}{k}e^{\frac{-j^2\ell c \pi i(c\tau+d)}{k^2}} \widehat{A}_\ell\left(-\frac{j(c\tau+d)}{k};\tau\right)\\
&+(c\tau+d)^2 e^{\frac{-j^2\ell c \pi i(c\tau+d)}{k^2}} \frac{\partial}{\partial u}\bigg|_{u=0} \widehat{A}_\ell\left(u-\frac{j(c\tau+d)}{k};\tau\right).
\end{align*}
For $\abcdMAT \in \Gamma_1(k)$, we have\footnote{In \cite[Eq. (2.18)]{Mao-24}, the factor $e^{\frac{j^2\ell c \pi i(2-d)}{k^2}}$ occurred instead of $e^{\frac{j^2\ell cd \pi i}{k^2}}$. The two factors are equal since $\tbtmat{a}{b}{c}{d}\in\Gamma_1(k)$. However, we prefer the one presented here since the transformation formulas can be generalized to $\partial\widehat{A}_\ell(-j/k)\vert_2\tbtmat{a}{b}{c}{d}=(-1)^{\ell cj/k}e^{\frac{j^2\ell cd \pi i}{k^2}}\partial\widehat{A}_\ell(-dj/k),\,\tbtmat{a}{b}{c}{d}\in\Gamma_0(k)$ where we can not use the other factor. For the notation see Section \ref{sec:Behaviour at cusps}.}
\begin{equation}
\label{eq:transformationGamma1k}
\frac{\partial}{\partial u}\bigg|_{u=0} \widehat{A}_\ell\left(u-\frac{j}{k};\frac{a\tau+b}{c\tau+d}\right)
=(-1)^{\frac{\ell j(c+d-1)}{k}}(c\tau+d)^2 e^{\frac{\pi i\ell cdj^2 }{k^2}} \frac{\partial}{\partial u}\bigg|_{u=0} \widehat{A}_\ell\left(u-\frac{j}{k};\tau\right).
\end{equation}

\subsection{Behavior of $\widehat{A}_\ell$ and $\partial\widehat{A}_\ell$ at cusps}
\label{sec:Behaviour at cusps}

Let $x$ be a positive rational number that is not an integer (say $x=j/k$), and $\gamma=\tbtmat{a}{b}{c}{d}\in\slZ$ with $c\geq0$. Let $\partial\widehat{A}_\ell$ denote the function $\frac{\partial}{\partial u}\widehat{A}_\ell(u;\tau)$ where $\frac{\partial}{\partial u}=\frac{1}{2}\left(\frac{\partial}{\partial \RE u}-\rmi\frac{\partial}{\partial \IM u}\right)$. Therefore $\partial\widehat{A}_\ell(-x)=\partial\widehat{A}_\ell(-x;\tau):=\left.\frac{\partial}{\partial u}\right\vert_{u=0}\widehat{A}_\ell(u-x;\tau)$. The weight $2$ slash operator is defined by $\partial\widehat{A}_\ell(-x)\vert_2\gamma(\tau):=(c\tau+d)^{-2}\partial\widehat{A}_\ell\left(-x;\frac{a\tau+b}{c\tau+d}\right)$.
\begin{theorem}
\label{thm:partAlhatCusp}
The function $\frac{1}{2\uppi\rmi}\rme^{-\uppi\rmi \ell cdx^2}\cdot\partial\widehat{A}_\ell(-x)\vert_2\gamma$ can be written as $H_1+H_2$. In this decomposition, $H_1(\tau)$, the holomorphic part, is given by
\beqs
C+\left(\sum_{n>-cx,\,m\geq M}-\sum_{n<-cx,\,m< M}\right)(-1)^{\ell n}\rme^{2\uppi\rmi(m-M)dx}\cdot(m-M)q^{\frac{\ell}{2}(n+cx)^2+(n+cx)(m-M)},
\eeqs
where the variables $n$ and $m$ take values in $\mathbb{Z}$, $M=\ell cx+\frac{\ell}{2}$ and
\beqs
C=-\delta_{cx}\frac{1-\delta_{(\ell+1)/2}+\delta_{(\ell+1)/2}\cdot(-1)^{cx}\cos(\uppi dx)}{4\sin^2(\uppi dx)}.
\eeqs
On the other hand, the nonholomorphic part $H_2(\tau)$ is given by
\beqs
-\frac{1}{2\uppi}\sum_{n,m\in\Z}(-1)^{\ell n}e^{2\pi i dx(\ell(n-cx)-(m+\frac{\ell}{2}))}f\left(\ell(n-cx)-(m+\frac{\ell}{2});\IM\tau\right)q^{-\frac{\ell}{2}(n-cx)^2+(n-cx)(m+\frac{\ell}{2})}
\eeqs
where
\beqs
f(t;\IM\tau):=\uppi \abs{t}\beta\left(t^2\cdot\frac{2\IM\tau}{\ell}\right)-\rme^{-\uppi t^2\cdot\frac{2\IM\tau}{\ell}}\cdot\sqrt{\frac{\ell}{2\IM\tau}}.
\eeqs
\end{theorem}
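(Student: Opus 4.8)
The plan is to start from the transformation formula for $\partial\widehat{A}_\ell$ under $\gamma=\tbtmat{a}{b}{c}{d}$ recalled just before the statement, which already expresses $\partial\widehat{A}_\ell(-x)\vert_2\gamma$ in terms of the two functions $\widehat{A}_\ell(-x(c\tau+d);\tau)$ and $\partial\widehat{A}_\ell(-x(c\tau+d);\tau)$, weighted by the elementary factors $2\pi i\ell cx\,e^{-\pi i\ell cx^2(c\tau+d)}$ and $e^{-\pi i\ell cx^2(c\tau+d)}$ respectively (here I use $\frac{jc}{k}=cx$ and $\frac{j^2}{k^2}=x^2$). Writing $x=j/k$ and noting $-x(c\tau+d)=-cx\tau-dx$, the product with the normalizing constant $\frac{1}{2\pi i}e^{-\pi i\ell cdx^2}$ turns the $\tau$-dependent exponential into a clean half-integral power of $q$ together with a constant root-of-unity phase. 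Since $\widehat{A}_\ell=A_\ell+(\text{Zwegers' }R\text{-correction})$, both $\widehat{A}_\ell$ and $\partial\widehat{A}_\ell$ split into a holomorphic part coming from $A_\ell$ and a nonholomorphic part coming from the correction; these will produce $H_1$ and $H_2$ respectively.

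For the holomorphic part I would substitute the elliptic variable $u=-cx\tau-dx$, so that $a=e^{2\pi iu}=e^{-2\pi idx}q^{-cx}$, into $A_\ell(u;\tau)=e^{\pi i\ell u}\sum_{n}\frac{(-1)^{\ell n}q^{\ell n(n+1)/2}}{1-aq^n}$ and expand each factor $\frac{1}{1-aq^n}=\frac{1}{1-e^{-2\pi idx}q^{n-cx}}$ as a geometric series, the direction of expansion being dictated by $\sgn(n-cx)$; this is precisely the source of the two sums $\sum_{n>-cx}$ and $\sum_{n<-cx}$ after the reindexing $n\mapsto-n$. Differentiating in $u$ at $u=0$ for the $\partial\widehat{A}_\ell$ piece produces, besides a simple pole, a double pole $\frac{1}{(1-aq^n)^2}$ whose geometric expansion $\sum_{m\geq1}mw^{m-1}$ yields the linear coefficient $(m-M)$ with $M=\ell cx+\frac{\ell}{2}$. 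Combining this with the $2\pi i\ell cx\,\widehat{A}_\ell$ term and completing the square in the $q$-exponent gives $\frac{\ell}{2}(n+cx)^2+(n+cx)(m-M)$, matching the stated $H_1$.

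The resonant term $n=cx$, present only when $cx\in\Z$ (whence the factor $\delta_{cx}$ in $C$), must be isolated: there $aq^n=e^{-2\pi idx}$ has modulus one and admits no geometric expansion. The contributions of $\frac{1}{1-e^{-2\pi idx}}$ and, from the derivative, $\frac{e^{-2\pi idx}}{(1-e^{-2\pi idx})^2}$, together with the derivative of the prefactor $e^{\pi i\ell u}$, collapse via $1-e^{2\pi i\theta}=-2ie^{\pi i\theta}\sin(\pi\theta)$ into the closed constant $C$; the parity factor $\delta_{(\ell+1)/2}$, nonzero precisely when $\ell$ is odd, records whether $M$ is an integer and hence whether the index $m=M$ lands on this boundary.

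The nonholomorphic part is the main obstacle. Here I would differentiate the correction term $\frac{i}{2}\sum_{m=0}^{\ell-1}e^{2\pi imu}\theta\!\left(m\tau+\tfrac{\ell-1}{2};\ell\tau\right)R\!\left(\ell u-m\tau-\tfrac{\ell-1}{2};\ell\tau\right)$ in $u$ after the substitution $u\mapsto u-x(c\tau+d)$, using the series definition of $R$ over $\nu\in\frac12+\Z$, the identity $\sgn(\nu)-E(z)=\sgn(\nu)\beta(z^2)$ for the undifferentiated summands, and $E'(z)=2e^{-\pi z^2}$ for the differentiated ones; the former yields the $\pi\abs{t}\beta(2t^2\IM\tau/\ell)$ term of $f$ and the latter the $e^{-2\pi t^2\IM\tau/\ell}\sqrt{\ell/(2\IM\tau)}$ term. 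The real work is combinatorial: one must merge the finite sum over $m\in\{0,\dots,\ell-1\}$, the triple-product expansion of $\theta(m\tau+\tfrac{\ell-1}{2};\ell\tau)$, and the half-integral lattice sum of $R$ at level $\ell\tau$ into a single double sum over $n,m\in\Z$, with quadratic exponent $-\frac{\ell}{2}(n-cx)^2+(n-cx)(m+\frac{\ell}{2})$ and argument $t=\ell(n-cx)-(m+\frac{\ell}{2})$. Absorbing the level-$\ell$ scaling, relabelling the half-integer index against the $m$-sum, and completing the square so that the two lattice directions decouple is the delicate bookkeeping; verifying that the accumulated root-of-unity phases assemble into $e^{2\pi idx(\ell(n-cx)-(m+\ell/2))}$ is the final consistency check that pins down $H_2$ in the stated form.
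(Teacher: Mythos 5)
Your route is the paper's own — unfold the transformation law to reduce everything to $\widehat{A}_\ell(-x(c\tau+d);\tau)$ and $\partial\widehat{A}_\ell(-x(c\tau+d);\tau)$, expand $A_\ell$ and $\partial A_\ell$ as geometric series split at $n=cx$, treat the resonant term separately, and process the Zwegers correction — but there is a genuine gap in your holomorphic/nonholomorphic bookkeeping. You assert that $A_\ell$ produces $H_1$ and the $R$-correction produces $H_2$. This is false. Because the elliptic variable is the torsion point $-x(c\tau+d)=-cx\tau-dx$, each $R$-function in the correction is evaluated at an argument $\lambda(\ell\tau)+\mu$ with $\lambda=-(cx+k/\ell)\neq0$ in general, and for the finitely many $\nu\in\frac12+\Z$ with $0<\nu\leq cx+k/\ell$ one has $\sgn(\nu)\neq\sgn(\nu+\lambda)$. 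For exactly those $\nu$ your identity $\sgn(\nu)-E(z)=\sgn(\nu)\beta(z^2)$ fails: the correct splitting is $\sgn(\nu)-E(z)=\bigl(\sgn(\nu)-\sgn(\nu+\lambda)\bigr)+\sgn(\nu+\lambda)\beta(z^2)$, as in \eqref{eq:Rtau}, and the discrepancy $\sgn(\nu)-\sgn(\nu+\lambda)=\pm2$ multiplies genuine $q$-powers that do \emph{not} decay as $\IM\tau\to+\infty$. These terms (the sum the paper calls $S_3$) belong to the holomorphic part, not the nonholomorphic one.

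This omission is not cosmetic; it breaks both halves of your computation. The geometric expansions of $A_\ell$ and $\partial A_\ell$ can only produce summation boundaries at the integers ($m\leq0$ and $m\geq1$, the paper's $S_1$ and $S_2$), whereas the stated $H_1$ has boundaries at $M=\ell cx+\frac{\ell}{2}$, which is generically not an integer; the passage from $\{m\geq 1\},\{m\leq 0\}$ to $\{m\geq M\},\{m<M\}$ is precisely the subtraction of $S_3=\sum_{n\in\Z,\,1\leq m<M}$ coming from the correction term. The constant is affected too: the paper's $C$ is your resonant-term constant $C_1$ \emph{minus} the $n=-cx$ slice of $S_3$, so your closed form for $C$ would not come out right either. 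Finally, with the sign-discrepancy terms left inside $H_2$, your claimed nonholomorphic part would contain Fourier coefficients that do not tend to $0$ as $\IM\tau\to+\infty$, contradicting the characterization of the nonholomorphic part in Remark \ref{rema:holoPart}. The repair is exactly the paper's step you skipped: split each $R(\lambda\tau+\mu;\ell\tau)$ and $\partial R(\lambda\tau+\mu;\ell\tau)$ via \eqref{eq:Rtau}--\eqref{eq:pRtau}, route the finite sign-discrepancy sums into the holomorphic part, verify the index bijection $(k,n_1,\nu_1)\mapsto(n,m)$ that turns them into $S_3$, and only then perform the lattice reindexing that yields $H_2$.
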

\begin{proof}
We shall use the theory of Jacobi forms in this proof, c.f. \cite{EZ-85}. According to Zwegers \cite[Theorem 4]{Zw-19}, $\widehat{A}_\ell(u;\tau)$ transforms like a Jacobi form of weight $1$ and index $-\ell/2$. One can verify by \cite[Theorem 1.4]{EZ-85} that, for $\gamma_1=\left[\tbtmat{a_1}{b_1}{c_1}{d_1},(\lambda,\mu),\xi\right]$,
\begin{equation}
\label{eq:derivativeJacobi}
(\partial\widehat{A}_\ell)\vert_{2,-\ell/2}\gamma_1(u;\tau)=\partial(\widehat{A}_\ell\vert_{1,-\ell/2}\gamma_1)(u;\tau)-2\pi i\left(\frac{c_1}{c_1\tau+d_1}\ell(u+\tau\lambda+\mu)-\ell\lambda\right)\widehat{A}_\ell\vert_{1,-\ell/2}\gamma_1(u;\tau),
\end{equation}
where $a_1d_1-b_1c_1=1$ and $\lambda,\mu\in\R$, $\abs{\xi}=1$. It follows that for the given $\gamma=\tbtmat{a}{b}{c}{d}$ (even when $c<0$), we have
\begin{align}
&\partial\widehat{A}_\ell\vert_{2,-\ell/2}\left[(0,-x),1\right]\vert_{2,-\ell/2}\tbtmat{a}{b}{c}{d}(u;\tau)\notag\\
=&\partial\widehat{A}_\ell\vert_{2,-\ell/2}\left[\tbtmat{a}{b}{c}{d},(-xc,-xd),1\right](u;\tau)\notag\\
=&\partial(\widehat{A}_\ell\vert_{1,-\ell/2}[(-cx,-dx),1])(u;\tau)-2\pi i\frac{c}{c\tau+d}\ell u\widehat{A}_\ell\vert_{1,-\ell/2}[(-cx,-dx),1](u;\tau)\label{eq:pAlvertxabcd},
\end{align}
in the last equality of which we have used the fact $\widehat{A}_\ell\vert_{1,-\ell/2}\tbtmat{a}{b}{c}{d}=\widehat{A}_\ell$ (c.f. \cite[Theorem 4]{Zw-19}). Unfolding the slash operators, setting $u=0$ and using the chain rule for $\frac{\partial}{\partial u}$ in the above equality we obtain
\begin{align}
&\partial\widehat{A}_\ell(-x)\vert_2\gamma=q^{-\frac{\ell c^2x^2}{2}}e^{-\pi i\ell cdx^2}\cdot\left(2\pi i\ell cx\widehat{A}_\ell(-x(c\tau+d);\tau)+\partial\widehat{A}_\ell(-x(c\tau+d);\tau)\right)\notag\\
=&q^{-\frac{\ell c^2x^2}{2}}e^{-\pi i\ell cdx^2}\cdot\left(2\pi i\ell cx(A_\ell(-x(c\tau+d);\tau)+\widetilde{A}_\ell(-x(c\tau+d);\tau))\right.\label{eq:pAlhatDecomp}\\
&+\left.\partial A_\ell(-x(c\tau+d);\tau)+\partial\widetilde{A}_\ell(-x(c\tau+d);\tau)\right),\notag
\end{align}
where $\widetilde{A}_\ell=\widehat{A}_\ell-A_\ell$ is the nonholomorphic correction term (c.f. \eqref{eq:defAlhat}). It remains to compute the four terms
\begin{equation*}
A_\ell(-x(c\tau+d);\tau),\quad \partial A_\ell(-x(c\tau+d);\tau),\quad \widetilde{A}_\ell(-x(c\tau+d);\tau),\quad \partial\widetilde{A}_\ell(-x(c\tau+d);\tau).
\end{equation*}

By the definition,
\begin{equation*}
A_\ell(-x(c\tau+d);\tau)=e^{-\pi i\ell dx}q^{-\frac{\ell cx}{2}}\sum_{n\in\Z}\frac{(-1)^{\ell n}q^{\ell n(n+1)/2}}{1-e^{-2\pi idx}q^{n-cx}}.
\end{equation*}
Splitting the sum $\sum_{n\in\Z}$ into three sums $\sum_{n>cx}$, $\sum_{n<cx}$ and $\sum_{n=cx}$ and expanding the geometric series $(1-z)^{-1}=1+z+z^2+\dots$ we find that
\begin{multline}
\label{eq:AlxExpan}
A_\ell(-x(c\tau+d);\tau)=e^{-\pi i\ell dx}q^{-\frac{\ell cx}{2}}\cdot\left(\sum_{n>cx,\, m\geq0}(-1)^{\ell n}q^{\ell n(n+1)/2}e^{-2\pi imdx}q^{m(n-cx)}\right.\\
+\sum_{n<cx,\, m\geq0}(-1)^{\ell n+1}q^{\ell n(n+1)/2}e^{-2\pi idx}q^{cx-n}e^{2\pi imdx}q^{m(cx-n)}\\
\left.+\delta_{cx}(-1)^{\ell cx}q^{\ell cx(cx+1)/2}(1-e^{-2\pi idx})^{-1}\right).
\end{multline}
The two series on the right-hand side converge normally\footnote{All the series in this proof converge normally (on compact sets of $\uhp$) and we will never mention this hereafter.} so the equality actually holds for $\tau\in\uhp$.

For the term $\partial A_\ell(-x(c\tau+d);\tau)$, we have
\begin{equation}
\label{eq:pAlxExpan}
\partial A_\ell(-x(c\tau+d);\tau)=\pi i\ell A_\ell(-x(c\tau+d);\tau)+2\pi ie^{-2\pi ix(c\tau+d)(\ell/2+1)}\sum_{n\in\Z}\frac{(-1)^{\ell n}q^{(\ell n^2+\ell n + 2n)/2}}{(1-e^{-2\pi idx}q^{n-cx})^2}.
\end{equation}
The first term on the right-side hand has been computed in \eqref{eq:AlxExpan} and second term, the sum $\sum_{n\in\Z}$ is equal to
\begin{multline}
\label{eq:pAlxExpanSecond}
\sum_{n>cx,\,m\geq0}(-1)^{\ell n}q^{n(\ell n+\ell+2)/2}e^{-2\pi imdx}(m+1)q^{m(n-cx)}\\
+\sum_{n<cx,\,m\geq0}(-1)^{\ell n}q^{n(\ell n+\ell+2)/2}e^{4\pi idx}q^{2(cx-n)}e^{2\pi imdx}(m+1)q^{m(cx-n)}\\
+\delta_{cx}(-1)^{\ell cx}q^{cx(\ell cx+\ell+2)/2}(1-e^{-2\pi idx})^{-2}.
\end{multline}
Combining \eqref{eq:AlxExpan}, \eqref{eq:pAlxExpan} and \eqref{eq:pAlxExpanSecond} we obtain
\begin{equation}
\label{eq:AlpAldecomp}
q^{-\frac{\ell c^2x^2}{2}}e^{-2\pi i\ell cdx^2}\left(\ell cx(A_\ell(-x(c\tau+d);\tau)+\frac{1}{2\pi i}\partial A_\ell(-x(c\tau+d);\tau)\right)=-S_1+S_2+C_1,
\end{equation}
where
\begin{align*}
S_1&=\sum_{n<-cx,\,m\leq0}(-1)^{\ell n}\rme^{2\uppi\rmi(m-M)dx}\cdot(m-M)q^{\frac{\ell}{2}(n+cx)^2+(n+cx)(m-M)},\\
S_2&=\sum_{n>-cx,\,m\geq1}(-1)^{\ell n}\rme^{2\uppi\rmi(m-M)dx}\cdot(m-M)q^{\frac{\ell}{2}(n+cx)^2+(n+cx)(m-M)},\\
C_1&=\delta_{cx}(-1)^{\ell cx}e^{-2\pi i\ell cdx^2}\left(\ell\left(cx+\frac{1}{2}\right)\frac{e^{-\pi i\ell dx}}{1-e^{-2\pi idx}}+\frac{e^{-\pi i(\ell+2)dx}}{(1-e^{-2\pi idx})^2}\right).
\end{align*}

Now we consider the terms $\widetilde{A}_\ell(-x(c\tau+d);\tau)$ and $\partial\widetilde{A}_\ell(-x(c\tau+d);\tau)$. As a prerequisite, note that for $\lambda,\mu\in\Q$ we have
\begin{multline}
\label{eq:Rtau}
R(\lambda\tau+\mu;\tau)=\sum_{\nu\in\frac{1}{2}+\Z}(\sgn(\nu)-\sgn(\nu+\lambda))(-1)^{\nu-\frac{1}{2}}e^{-2\pi i\nu\mu}q^{-\frac{1}{2}\nu^2-\lambda \nu}\\
+\sum_{\nu\in\frac{1}{2}+\Z}\sgn(\nu+\lambda)\beta((\nu+\lambda)^2\cdot2\IM\tau)(-1)^{\nu-\frac{1}{2}}e^{-2\pi i\nu\mu}q^{-\frac{1}{2}\nu^2-\lambda \nu}
\end{multline}
where the first sum on the right-hand side is the holomorphic part and the second sum the nonholomorphic part. On the other hand, since
\begin{align*}
\frac{\partial}{\partial u}E\left((\nu+\frac{\mathrm{Im}(u)}{\mathrm{Im}(\tau)})\sqrt{2\mathrm{Im}(\tau)}\right)&=-\frac{i}{2}\frac{\partial}{\partial\IM u}E\left((\nu+\frac{\mathrm{Im}(u)}{\mathrm{Im}(\tau)})\sqrt{2\mathrm{Im}(\tau)}\right)\\
&=-ie^{-\pi(\nu+\frac{\IM z}{\IM\tau})^2\cdot2\IM\tau}\cdot\sqrt{\frac{2}{\IM\tau}},
\end{align*}
we have
\begin{multline}
\label{eq:pRtau}
\partial R(\lambda\tau+\mu;\tau)=\sum_{\nu\in\frac{1}{2}+\Z}(\sgn(\nu)-\sgn(\nu+\lambda))(-1)^{\nu-\frac{1}{2}}(-2\pi i\nu)e^{-2\pi i\nu\mu}q^{-\frac{1}{2}\nu^2-\lambda \nu}\\
+\sum_{\nu\in\frac{1}{2}+\Z}\sgn(\nu+\lambda)\beta((\nu+\lambda)^2\cdot2\IM\tau)(-1)^{\nu-\frac{1}{2}}(-2\pi i\nu)e^{-2\pi i\nu\mu}q^{-\frac{1}{2}\nu^2-\lambda \nu}\\
+i\sum_{\nu\in\frac{1}{2}+\Z}e^{-\pi(\nu+\lambda)^2\cdot2\IM\tau}\cdot\sqrt{\frac{2}{\IM\tau}}(-1)^{\nu-\frac{1}{2}}e^{-2\pi i\nu\mu}q^{-\frac{1}{2}\nu^2-\lambda \nu}
\end{multline}
where the first sum on the right-hand side is the holomorphic part and the remaining two sums the nonholomorphic part. By definition
\begin{multline*}
\ell cx(\widetilde{A}_\ell(-x(c\tau+d);\tau)+\frac{1}{2\pi i}\partial \widetilde{A}_\ell(-x(c\tau+d);\tau)\\
=\frac{1}{2\pi i}\sum_{k=0}^{\ell-1}q^{-kcx}e^{-2\pi ikdx}\theta\left(k\tau+\frac{\ell-1}{2};\ell\tau\right)\cdot\left(-\pi(\ell cx+k)R\left(-(cx+\frac{k}{\ell})\ell\tau-\ell dx-\frac{\ell-1}{2};\ell\tau\right)\right.\\
+\left.\frac{i\ell}{2}\partial R\left(-(cx+\frac{k}{\ell})\ell\tau-\ell dx-\frac{\ell-1}{2};\ell\tau\right)\right).
\end{multline*}
It follows from this, \eqref{eq:Rtau}, \eqref{eq:pRtau} and the definition \eqref{eq:triProd} of $\theta$ that the holomorphic part of $q^{-\frac{\ell c^2x^2}{2}}e^{-2\pi i\ell cdx^2}\left(\ell cx(\widetilde{A}_\ell(-x(c\tau+d);\tau)+\frac{1}{2\pi i}\partial \widetilde{A}_\ell(-x(c\tau+d);\tau)\right)$ is equal to
\begin{multline}
\label{eq:RholoPart}
\frac{1}{2\pi i}q^{-\frac{\ell c^2x^2}{2}}e^{-2\pi i\ell cdx^2}\sum_{0\leq k<\ell}\sum_{n\in\frac{1}{2}+\Z}\sum_{\nu\in\frac{1}{2}+\Z}e^{-2\pi ikdx}q^{-kcx}e^{\pi i\ell n}q^{\frac{\ell}{2}n^2+kn}\\
\cdot\left(\sgn(\nu)-\sgn(\nu-cx-\frac{k}{\ell})\right)(-1)^{\nu-\frac{1}{2}}\pi(\ell\nu-\ell cx-k)e^{2\pi i\nu(\ell dx+\frac{\ell-1}{2})}q^{-\frac{\ell}{2}\nu^2+(\ell cx+k)\nu}.
\end{multline}
Note that $\sgn(\nu)-\sgn(\nu-cx-\frac{k}{\ell})=0$ unless $0<\nu\leq cx+\frac{k}{\ell}$ since $cx+\frac{k}{\ell}\geq0$. Therefore, applying the changes of variables $n=\frac{1}{2}+n_1$ and $\nu=\frac{1}{2}+\nu_1$ we find that \eqref{eq:RholoPart} is equal to
\begin{multline}
\label{eq:RholoPart2}
(-1)^{\ell+1}e^{-2\pi i\ell cdx^2}\sum_{0\leq k<\ell}\sum_{n_1\in\Z}\sum_{\substack{{0\leq\nu_1\in\Z} \\ {\nu_1<cx+k/\ell-1/2}}}e^{\pi i\ell n_1}q^{\frac{\ell}{2}(n_1+\frac{1}{2}-cx)+(\ell cx+k)(n_1+\frac{1}{2}-cx)}\\
\cdot(-1)^{\nu_1}(\ell\nu_1+\frac{\ell}{2}-\ell cx-k)e^{2\pi i\left(\nu_1(\ell dx+\frac{\ell-1}{2})+(\frac{\ell}{2}-k)dx\right)}q^{-\frac{\ell}{2}(\nu+\frac{1}{2})^2+(\ell cx+k)(\nu+\frac{1}{2})}.
\end{multline}
Set $n=n_1-\nu_1-1$ and $m=\ell\nu+\ell-k-1$, which gives a bijection\footnote{It may happen that the domain and codomain of this bijection are the empty set.} from
$$
\{(k,n_1,\nu_1)\in\Z^3\colon 0\leq k<\ell,\, 0\leq\nu_1<cx+k/\ell-1/2\}
$$
onto
$$
\{(n,m)\in\Z^2\colon 0\leq m<M-1\}.
$$
Hence \eqref{eq:RholoPart2} becomes
\begin{equation*}
-S_3:=-\sum_{\substack{{n\in\Z} \\{1\leq m<M}}}(-1)^{\ell n}\rme^{2\uppi\rmi(m-M)dx}\cdot(m-M)q^{\frac{\ell}{2}(n+cx)^2+(n+cx)(m-M)}.
\end{equation*}
It follows from this, \eqref{eq:AlpAldecomp} and \eqref{eq:pAlhatDecomp} that the holomorphic part of $\frac{1}{2\uppi\rmi}\rme^{-\uppi\rmi \ell cdx^2}\cdot\partial\widehat{A}_\ell(-x)\vert_2\gamma$ is equal to $C_1-S_1+S_2-S_3$. Now if $cx\not\in\Z$, then $C=C_1=0$ and
\begin{equation*}
-S_1+S_2-S_3=\sum_{n>-cx,\,m\geq M}-\sum_{n<-cx,\,m< M}.
\end{equation*}
Thus, the holomorphic part of $\frac{1}{2\uppi\rmi}\rme^{-\uppi\rmi \ell cdx^2}\cdot\partial\widehat{A}_\ell(-x)\vert_2\gamma$ equals $H_1(\tau)$ as required. Otherwise if $cx\in\Z$, then the holomorphic part is equal to
\begin{equation*}
C_1-S_1+S_2-S_3=C_1-\sum_{n=-cx,\, 1\leq m<M} + \sum_{n>-cx,\,m\geq M}-\sum_{n<-cx,\,m< M}.
\end{equation*}
Since $C_1-\sum_{n=-cx,\, 1\leq m<M}=C$, the holomorphic part is equal to $H_1(\tau)$ as well in this case.

It remains to compute the nonholomorphic part of $\frac{1}{2\uppi\rmi}\rme^{-\uppi\rmi \ell cdx^2}\cdot\partial\widehat{A}_\ell(-x)\vert_2\gamma$ which is equal to the nonholomorphic part of $q^{-\frac{\ell c^2x^2}{2}}e^{-2\pi i\ell cdx^2}\left(\ell cx(\widetilde{A}_\ell(-x(c\tau+d);\tau)+\frac{1}{2\pi i}\partial \widetilde{A}_\ell(-x(c\tau+d);\tau)\right)$. This is equal to, by \eqref{eq:Rtau} and \eqref{eq:pRtau},
\begin{multline*}
\frac{1}{2\pi i}q^{-\frac{\ell c^2x^2}{2}}e^{-2\pi i\ell cdx^2}\sum_{0\leq k<\ell}q^{-kcx}e^{-2\pi ikdx}\theta\left(k\tau+\frac{\ell-1}{2};\ell\tau\right)\\
\cdot\sum_{\nu\in\frac{1}{2}+\Z}(-1)^{\nu-\frac{1}{2}}e^{2\pi i\nu(\ell dx+\frac{\ell-1}{2})}f\left(\ell\nu-\ell cx-k;\IM\tau\right)q^{-\frac{\ell}{2}\nu^2+(\ell cx+k)\nu}.
\end{multline*}
Substituting the definition of $\theta$ in the above expression and applying the change of variable $\nu=\nu'+\frac{1}{2}$ we find that the nonholomorphic part equals
\begin{multline*}
-\frac{1}{2\pi}e^{\pi i\ell cx}\sum_{\substack{{\nu',m'\in\Z} \\ {0\leq k<\ell}}}e^{2\pi i\left(dx(\ell\nu'+\ell-k-1-\frac{\ell}{2}-\ell cx+1)+\frac{1}{2}(\ell m'+k+\frac{\ell}{2}+\ell\nu'+\ell-k-1-\frac{\ell}{2}-\ell cx+1)\right)}\\
\cdot f\left(\ell\nu'+\frac{\ell}{2}-\ell cx-k;\IM\tau\right)q^{\frac{1}{2\ell}\left((\ell m'+k+\frac{\ell}{2})^2-(\ell\nu'+\ell-k-1-\frac{\ell}{2}-\ell cx+1)^2\right)}
\end{multline*}
Setting $\nu_1=\ell\nu'+\ell-k-1$ and $m=\ell m'+k$ we simplify the above expression further to
\begin{equation*}
-\frac{1}{2\pi}e^{\pi i\ell cx}\cdot\sum_{\substack{\nu_1,m\in\Z \\ \nu_1+m\equiv-1\bmod{\ell}}}e^{2\pi i\left((dx+\frac{1}{2})(\nu_1-M)+\frac{1}{2}(m+\frac{\ell}{2})\right)}f(\nu_1-M;\IM\tau)q^{\frac{1}{2\ell}\left((m+\frac{\ell}{2})^2-(\nu_1-M)^2\right)}.
\end{equation*}
Finally, the above expression is actually equal to $H_2(\tau)$ which can be seen by the change of variables $\nu_1=\ell n-m-1$. This concludes the whole proof.
\end{proof}

\begin{remark}
\label{rema:holoPart}
The terms ``holomorphic part'' and ``nonholomorphic part'' need to be further clarified since a decomposition of a real analytic function into a sum of a holomprhic one and a nonholomorphic one is not unique. Let $g$ be a real analytic function of period $N$, say $g=\partial\widehat{A}_\ell(-x)\vert_2\gamma$. Then $g(\tau)=\sum_{n\in N^{-1}\Z}c_n(\IM\tau)q^n$ which converges unconditionally for any fixed $\IM\tau$ with respect to the inner product on $\R/N\Z$ by elementary Fourier analysis. The uniquely defined coefficients $c_n(y)=N^{-1}\int_0^Ng(x+iy)e^{-2\uppi in(x+iy)}\,\mathrm{d}x$ are real analytic since $g$ is. Applying integration by parts to this integral we obtain $c_n(y)e^{-2\uppi iny}=O(n^{-2})$ for $y$ in any compact set, from which it follows that the series of $g(\tau)$ converges absolutely and uniformly on any compact subset of $\uhp$. If $\lim_{\IM\tau\to+\infty}c_n(\IM\tau)=c_n(+\infty)$ exists in $\C$ for each $n$ (this is the case for $g=\partial\widehat{A}_\ell(-x)\vert_2\gamma$), then formally $g$ can be decomposed as $H_1+H_2$ where
\begin{align*}
H_1(\tau)&=\sum_{n\in N^{-1}\Z}c_n(+\infty)q^n,\\
H_2(\tau)&=\sum_{n\in N^{-1}\Z}\left(c_n(\IM\tau)-c_n(+\infty)\right)q^n.
\end{align*}
However, the series $H_1(\tau)$ (or $H_2(\tau)$) not necessarily converges. To ensure this decomposition makes sense, we should \emph{assume} that $H_2(\tau)$ converges absolutely and uniformly on the sets $\{\tau\in\uhp\colon\IM\tau\geq Y_0\}$ with $Y_0$ being any positive real number from which the absolute and compact uniform convergence of $H_1$ follows. (This is the case for $g=\partial\widehat{A}_\ell(-x)\vert_2\gamma$ and as well for $g$ being any Harmonic Maass form.) Under these assumptions we call $H_1$, $H_2$ the holomorphic part and the nonholomorphic part of $g$ respectively. Note that the nonholomorphic part $H_2$ of $g$ is characterized by the conditions $g-H_2$ is holomorphic on $\uhp$ and the Fourier expansion $H_2(\tau)=\sum_{n\in N^{-1}\Z}c_n'(\IM\tau)q^n$ satisfies $\lim_{\IM\tau\to+\infty}c_n'(\IM\tau)=0$ for each $n$. Also note that our definition does not apply to all cases, e.g., to Harmonic Maass forms of manageable growth of weight $\leq1$.
\end{remark}
The following property is immediate according to Remark \ref{rema:holoPart}.
\begin{cor}
\label{coro:Hpart}
Let $f_1$ and $f_2$ have well defined holomorphic parts $H_{11}, H_{21}$ and nonholomorphic parts $H_{12}, H_{22}$ respectively. Then the holomorphic part of $f_1+f_2$ is $H_{11}+H_{21}$ and the nonholomorphic part of $f_1+f_2$ is $H_{12}+H_{22}$. 
\end{cor}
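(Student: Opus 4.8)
The plan is to appeal directly to the characterization of the nonholomorphic part recorded at the end of Remark \ref{rema:holoPart}: a real analytic $g$ of manageable type has nonholomorphic part $H$ precisely when $g-H$ is holomorphic on $\uhp$ and the Fourier coefficients of $H$ tend to $0$ as $\IM\tau\to+\infty$. I would show that $H_{12}+H_{22}$ meets these two conditions for $g=f_1+f_2$, after first confirming that the candidate decomposition $f_1+f_2=(H_{11}+H_{21})+(H_{12}+H_{22})$ is legitimate in the sense required there.

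First I would dispatch the convergence assumption built into the definition. By hypothesis each $H_{i2}$ converges absolutely and uniformly on every set $\{\tau\in\uhp\colon\IM\tau\geq Y_0\}$ with $Y_0>0$; a finite sum of such series inherits the same property, so $H_{12}+H_{22}$ converges absolutely and uniformly on each such set. Hence the decomposition is admissible, and by the discussion in Remark \ref{rema:holoPart} the corresponding ``holomorphic part'' series $H_{11}+H_{21}$ automatically converges absolutely and compactly as well.

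Next I would verify the holomorphy condition: $(f_1+f_2)-(H_{12}+H_{22})=(f_1-H_{12})+(f_2-H_{22})=H_{11}+H_{21}$, which is a sum of two functions holomorphic on $\uhp$ and therefore holomorphic. For the decay condition, write the Fourier expansions $H_{i2}(\tau)=\sum_{n}c_n^{(i)}(\IM\tau)q^n$ with $\lim_{\IM\tau\to+\infty}c_n^{(i)}(\IM\tau)=0$ for each $n$ and each $i\in\{1,2\}$. By linearity of Fourier coefficients, the $n$th coefficient of $H_{12}+H_{22}$ is $c_n^{(1)}(\IM\tau)+c_n^{(2)}(\IM\tau)$, whose limit as $\IM\tau\to+\infty$ is $0$. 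The characterization then identifies $H_{12}+H_{22}$ as the nonholomorphic part of $f_1+f_2$, whence $H_{11}+H_{21}=(f_1+f_2)-(H_{12}+H_{22})$ is its holomorphic part.

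Since every step is an immediate consequence of the defining conditions and the linearity of the Fourier coefficient maps $g\mapsto c_n(\IM\tau)$, I do not anticipate any genuine obstacle. The only point that warrants explicit attention is that the convergence hypothesis attached to the definition of the decomposition is stable under addition; this is why I would settle the absolute and uniform convergence of $H_{12}+H_{22}$ on each half-plane $\{\IM\tau\geq Y_0\}$ before invoking the characterization.
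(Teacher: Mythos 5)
Your proposal is correct and matches the paper's approach: the paper gives no written proof, stating the corollary is immediate from Remark \ref{rema:holoPart}, and your argument is precisely the verification that remark calls for (stability of the convergence hypothesis under addition, holomorphy of the difference, and decay of the summed Fourier coefficients, then invoking the characterization of the nonholomorphic part). No gaps; you have simply written out the details the paper leaves implicit.
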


A function series $\sum_nf_n(z)$ is said to converge normally for $z\in A\subseteq\C$ if there is a sequence $c_n\geq0$ with the property $\abs{f_n(z)}\leq c_n,\,z\in A$ and $\sum_n c_n<+\infty$. Note that normal convergence implies absolute and uniform convergence and that the product of two normally convergent function series still converges normally. Also note that $H_1(\tau)$ and $H_2(\tau)$ in Theorem \ref{thm:partAlhatCusp} both converge normally on $\{\tau\in\uhp\colon\IM\tau\geq Y_0\}$ where $Y_0>0$.
\begin{prop}
\label{prop:productHolo}
Let $f_1,\,f_2$ be real analytic (or of class $C^2$ in general) functions on $\uhp$ of period $N$. ($N$ is a positive integer.) Suppose
\begin{enumerate}
    \item $f_1$ has well defined holomorphic part $H_{1}$ and nonholomorphic part $H_{2}$,
    \item The Fourier expansion of $H_2(\tau)$ converges normally on $\{\tau\in\uhp\colon\IM\tau\geq Y_0\}$ for any positive real number $Y_0$,
    \item $f_2$ is holomorphic and there is $n_0\in N^{-1}\Z$ such that $f_2=\sum_{n_0\leq n\in N^{-1}\Z}a_nq^n$.
\end{enumerate}
Then the holomorphic part of $f_1f_2$ is $H_1f_2$ and the nonholomorphic part is $H_2f_2$.
\end{prop}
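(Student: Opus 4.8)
The plan is to reduce everything to the intrinsic characterization of the holomorphic/nonholomorphic decomposition recorded in Remark \ref{rema:holoPart}: any decomposition $g=A+B$ in which $A$ is holomorphic on $\uhp$, $B$ converges absolutely and uniformly on each $\{\tau\in\uhp\colon\IM\tau\geq Y_0\}$, and \emph{every} Fourier coefficient of $B$ tends to $0$ as $\IM\tau\to+\infty$, necessarily exhibits $A$ as the holomorphic part and $B$ as the nonholomorphic part. Since by hypothesis $(1)$ we have $f_1=H_1+H_2$, it follows that $f_1f_2=H_1f_2+H_2f_2$, and $H_1f_2$ is holomorphic, being a product of the holomorphic function $H_1$ with the holomorphic $f_2$. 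Thus it suffices to verify that $H_2f_2$ meets the two analytic conditions required of a nonholomorphic part; the conclusion then follows from the characterization, noting that $f_1f_2$ is again real analytic and $N$-periodic, so the framework of Remark \ref{rema:holoPart} genuinely applies to it.

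The next step is to control the Fourier expansion of the product. Writing $H_2(\tau)=\sum_m b_m(\IM\tau)q^m$, hypothesis $(1)$ together with the characterization gives $\lim_{\IM\tau\to+\infty}b_m(\IM\tau)=0$ for every $m$, while hypothesis $(2)$ provides a summable majorant $\{\beta_m\}$ with $|b_m(\IM\tau)q^m|\leq\beta_m$ on $\{\IM\tau\geq Y_0\}$. For $f_2=\sum_{k\geq n_0}a_kq^k$ I would first observe that a holomorphic $N$-periodic function whose Fourier support is bounded below becomes, after the substitution $w=e^{2\pi i\tau/N}$, a one-sided (hence convergent) power series in $w$ on the punctured disc, so $f_2$ too converges normally on $\{\IM\tau\geq Y_0\}$ with some majorant $\{\alpha_k\}$, $|a_kq^k|\leq\alpha_k$. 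Because the product of two normally convergent series converges normally (as noted before the statement), $H_2f_2$ converges normally on $\{\IM\tau\geq Y_0\}$---exactly the absolute and uniform convergence demanded of a nonholomorphic part---and it may be regrouped by total exponent into the Fourier series $H_2f_2=\sum_n c_n'(\IM\tau)q^n$ with $c_n'(\IM\tau)=\sum_{k\geq n_0}a_k\,b_{n-k}(\IM\tau)$.

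The crux is to show $c_n'(\IM\tau)\to0$ as $\IM\tau\to+\infty$ for each fixed $n$, and I expect this interchange of limit and infinite summation to be the main obstacle. I would split the sum at $m=n-k=0$. The part with $n-k>0$ is finite (only $n_0\leq k<n$ contribute), and each term tends to $0$, so it is harmless. For the infinite tail with $m=n-k\leq0$ the naive bound on $b_m$ would degenerate as $\IM\tau\to+\infty$, so I instead produce a majorant \emph{independent of} $\IM\tau$: the normal-convergence estimates give $|b_m(\IM\tau)|\leq\beta_m e^{2\pi m Y_0}$ for $m\leq0$ and $|a_k|\leq\alpha_k e^{2\pi k Y_0}$ on $\{\IM\tau\geq Y_0\}$, whence $|a_k\,b_{n-k}(\IM\tau)|\leq\alpha_k\beta_{n-k}\,e^{2\pi n Y_0}$, and $\sum_k\alpha_k\beta_{n-k}\leq(\sum_k\alpha_k)(\sum_m\beta_m)<\infty$ since the convolution of two absolutely summable sequences is absolutely summable. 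With this $\IM\tau$-uniform summable majorant, dominated convergence for series permits passing the limit inside, giving $\lim_{\IM\tau\to+\infty}c_n'(\IM\tau)=\sum_k a_k\lim_{\IM\tau\to+\infty}b_{n-k}(\IM\tau)=0$. Once this is established, the characterization in Remark \ref{rema:holoPart} identifies $H_2f_2$ as the nonholomorphic part and $H_1f_2$ as the holomorphic part of $f_1f_2$, completing the argument.
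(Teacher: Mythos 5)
Your proof is correct and follows essentially the same route as the paper's: both write the Fourier coefficients of $H_2f_2$ as convolutions $\sum_k a_k b_{n-k}(\IM\tau)$ (justified by normal convergence of the factors), split off the finitely many terms, and handle the infinite tail by an $\IM\tau$-uniform summable majorant that permits interchanging the limit $\IM\tau\to+\infty$ with the sum. The only cosmetic difference is the majorant itself: you exploit monotonicity of $e^{2\pi m\IM\tau}$ for $m\le0$ on $\{\IM\tau\ge Y_0\}$, whereas the paper introduces an auxiliary threshold $Y_n$ and pairs the majorant of $H_2$ on $\{\IM\tau\ge Y_n\}$ with the coefficients of $f_2$ weighted at height $1$; both devices yield the same dominated-convergence conclusion.
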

\begin{proof}
Set $y=\IM\tau$. We write
\begin{equation*}
H_1(\tau)=\sum_{n\in N^{-1}\Z}c_nq^n,\qquad H_2(\tau)=\sum_{n\in N^{-1}\Z}d_n(y)q^n
\end{equation*}
according to Remark \ref{rema:holoPart}. Since $H_1$ and $f_2$ converge on $\abs{q}<1$, so is $H_1f_2$ by the theory of Laurent series. By assumption $H_2$ and by the theory of Laurent series $f_2$, converge normally on $\{\tau\in\uhp\colon\IM\tau\geq Y_0\}$. Therefore the double series
$$
H_2f_2=\sum_{n\in N^{-1}\Z}\left(\sum_{n_1+n_2=n}d_{n_1}(y)a_{n_2}\right)q^n
$$
converges normally. Set $b_n(y)=\sum_{n_1+n_2=n}d_{n_1}(y)a_{n_2}$. It remains to prove $\lim_{y\to+\infty}b_n(y)=0$. For each $n\in N^{-1}\Z$, define
\begin{equation*}
Y_{n}:=\sup\left\{\frac{n_1-n}{n_1}\colon n_1\in N^{-1}\Z,\,n_1<\min\{0,n\}\right\}.
\end{equation*}
Clearly, $0<Y_n<+\infty$. We split $b_n(y)$ into three parts:
\begin{equation*}
b_n(y)=\sum_{\substack{n_1+n_2=n\\0\leq n_1\leq n-n_0}}+\sum_{\substack{n_1+n_2=n\\n\leq n_1<0}}+\sum_{\substack{n_1+n_2=n\\n_1<\min\{0,n\}}}
\end{equation*}
and use $b_n^{(1)}(y)$, $b_n^{(2)}(y)$ and $b_n^{(3)}(y)$ to denote the three sums on the right-hand side. We want to show that we can interchange the limit $y\to+\infty$ and the summation $\sum_{n_1+n_2=n}$. The sums $b_n^{(1)}(y)$ and $b_n^{(2)}(y)$ are finite so let us consider $b_n^{(3)}(y)$. If $y\geq Y_n$, then for any $n_1\in N^{-1}\Z$ we have $e^{2\pi n_1y}\leq e^{-2\pi n_2}$ where $n_2=n-n_1$. It follows that for $y\geq Y_n$
\begin{align*}
\abs{b_n^{(3)}(y)}&\leq\sum_{n_1<\min\{0,n\}}\abs{d_{n_1}(y)a_{n_2}}=\sum_{n_1<\min\{0,n\}}\abs{d_{n_1}(y)}e^{-2\pi n_1y}\cdot\abs{a_{n_2}}e^{2\pi n_1y}\\
&\leq\sum_{n_1<\min\{0,n\}}\abs{d_{n_1}(y)}e^{-2\pi n_1y}\cdot\abs{a_{n_2}}e^{-2\pi n_2}\\
&\leq\sum_{n_1\in N^{-1}\Z}\abs{d_{n_1}(y)}e^{-2\pi n_1y}\cdot\sum_{n_2\in N^{-1}\Z}\abs{a_{n_2}}e^{-2\pi n_2}<+\infty
\end{align*}
since $H_2$, respectively $f_2$, converge normally on $\{\tau\in\uhp\colon\IM\tau\geq Y_n\}$, respectively $\{\tau\in\uhp\colon\IM\tau=1\}$.
Thus the series $b_n(y)$ converges uniformly on $y\geq Y_n$ and hence $\lim_{y\to+\infty}b_n(y)=0$ by interchanging limits and uniform limits for each $n$. This concludes the whole proof.
\end{proof}

The order $\ord_{\rmi\infty}\partial\widehat{A}_\ell(-x)\vert_2\gamma$, which is defined to be the exponent $n$ (possibly nonintegral) of the leading term $c_nq^n$ in the Fourier expansion of the holomorphic part of $\partial\widehat{A}_\ell(-x)\vert_2\gamma$, can be computed easily using Theorem \ref{thm:partAlhatCusp}:
\begin{cor}
\label{coro:orderPartialAlhat}
Recall that $M=\ell cx+\frac{\ell}{2}$. If
\begin{equation*}
(\ell+1)\cdot(\lceil-cx\rceil+cx)+\lceil M\rceil-M>\frac{\ell}{2}+1,
\end{equation*}
then we have
\begin{equation*}
\ord\nolimits_{\rmi\infty}\partial\widehat{A}_\ell(-x)\vert_2\gamma=\frac{\ell}{2}(\lceil-cx\rceil+cx-1)^2+(\lceil-cx\rceil+cx-1)\cdot(\lceil M\rceil-M-1).
\end{equation*}
On the other hand, if the reversed inequality holds, then we have
\begin{equation*}
\ord\nolimits_{\rmi\infty}\partial\widehat{A}_l(-x)\vert_2\gamma=\frac{\ell}{2}(\lceil-cx\rceil+cx)^2+(\lceil-cx\rceil+cx)\cdot(\lceil M\rceil-M).
\end{equation*}
\end{cor}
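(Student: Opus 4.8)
The plan is to read the order directly off the explicit Fourier expansion of the holomorphic part $H_1$ furnished by Theorem \ref{thm:partAlhatCusp}. Since $\frac{1}{2\pi i}e^{-\pi i\ell cdx^2}$ is a nonzero constant it does not affect the order, so $\ord_{i\infty}\partial\widehat{A}_\ell(-x)\vert_2\gamma$ equals the least exponent occurring in $H_1$ with nonzero coefficient. Each term of $H_1$ carries the exponent
\[
E(n,m)=\tfrac{\ell}{2}(n+cx)^2+(n+cx)(m-M),
\]
and its coefficient is a unimodular constant times $(m-M)$; hence a term is present precisely when $m\neq M$, and the problem reduces to the discrete minimization of $E$ over the two index ranges $\{n>-cx,\,m\geq M\}$ and $\{n<-cx,\,m<M\}$. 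Writing $a=n+cx$ and $b=m-M$, so that $E=\frac{\ell}{2}a^2+ab$, is convenient: in the first range $a>0,\,b\geq0$, while in the second $a<0,\,b<0$.

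First I would locate the minimum within each range by monotonicity. For a fixed sign of $a$ the partial differences $\partial_aE=\ell a+b$ and $\partial_bE=a$ keep a constant sign on each range, so $E$ increases as $(n,m)$ moves away from the boundary lines $n=-cx$ and $m=M$. Consequently the minimum of each range sits at its innermost lattice corner. Setting $\alpha:=\lceil-cx\rceil+cx\in[0,1)$ and $\beta:=\lceil M\rceil-M\in[0,1)$, the first range has corner $(a,b)=(\alpha,\beta)$ and the second has corner $(\alpha-1,\beta-1)$, giving the two candidate orders
\[
E_1=\tfrac{\ell}{2}\alpha^2+\alpha\beta,\qquad E_2=\tfrac{\ell}{2}(\alpha-1)^2+(\alpha-1)(\beta-1).
\]

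The order is then $\min\{E_1,E_2\}$, and the dichotomy follows from a one-line computation:
\[
E_2-E_1=\tfrac{\ell}{2}(1-2\alpha)+(1-\alpha-\beta)=\tfrac{\ell}{2}+1-(\ell+1)\alpha-\beta.
\]
Thus $E_2<E_1$ exactly when $(\ell+1)(\lceil-cx\rceil+cx)+(\lceil M\rceil-M)>\frac{\ell}{2}+1$, which is the hypothesis of the first case and yields $\ord_{i\infty}\partial\widehat{A}_\ell(-x)\vert_2\gamma=E_2$, while the reversed inequality gives $E_1$. Both agree with the claimed formulas after substituting back $\alpha$ and $\beta$.

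The step requiring the most care — the main obstacle — is the bookkeeping at the boundary of the index ranges, that is, the integrality of $cx$ and of $M$. When $cx\in\Z$ one has $\alpha=0$ and the constant $C$ of Theorem \ref{thm:partAlhatCusp} contributes a genuine $q^0$ term that must be folded into the comparison; when $M\in\Z$ one has $\beta=0$, the corner coefficient $(m-M)$ of the first range vanishes, and the effective corner shifts to the next lattice point, so the value of $\beta$ entering $E_1$ has to be read off with this exclusion in mind. One must also check that the terms realizing the minimal exponent do not cancel: at the relevant corners the factor $(m-M)$ is nonzero and the unimodular phases combine without cancellation, so the leading coefficient survives. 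Apart from these boundary subtleties, the argument is the elementary corner-minimization described above.
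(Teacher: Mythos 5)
Your argument is correct and is essentially the paper's own proof: both read the order off the expansion $H_1$ of Theorem \ref{thm:partAlhatCusp}, locate the minimal exponent of each of the two sums at its corner lattice point $(\alpha,\beta)$ resp.\ $(\alpha-1,\beta-1)$, where $\alpha=\lceil-cx\rceil+cx$ and $\beta=\lceil M\rceil-M$, and then decide between the two candidates via the same difference $E_2-E_1=\frac{\ell}{2}+1-(\ell+1)\alpha-\beta$, whose sign is pinned down by the hypothesis (so under either strict inequality the minimizing term is unique and no cancellation can occur). The one place where you do less than the paper is the degenerate case $cx\in\Z$, where your corners lie outside the index ranges: the paper settles it by observing that then $dx\notin\Z$ (since $x\notin\Z$ and $ad-bc=1$), hence $C\neq0$, while every term of the two sums has strictly positive exponent, so the order is $0$, which agrees with the second stated formula because $\alpha=0$ forces the reversed inequality; you correctly flag this case but leave the ``folding in'' unverified. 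Your second flagged subtlety, $M\in\Z$ with $cx\notin\Z$ (possible for general $\ell$, e.g.\ $\ell=3$, $cx=\tfrac12$, where the corner coefficient $m-M$ vanishes and the stated formulas can genuinely fail), is a real issue, but the paper's proof silently ignores it too; it never occurs in the paper's applications, where $x$ has odd prime denominator, so $M$ is never an integer there.
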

\begin{proof}
Recall that $H_1(\tau)$ in Theorem \ref{thm:partAlhatCusp} is the holomorphic part of $\frac{1}{2\uppi\rmi}\rme^{-\uppi\rmi \ell cdx^2}\cdot\partial\widehat{A}_\ell(-x)\vert_2\gamma$. Thus we need to find the leading term of $H_1(\tau)$. If $cx\in\Z$ then $dx\not\in\Z$ and hence $C\neq0$. Since the exponent of $q$ of any term in the sum $\sum_{n>-cx,\,m\geq M}-\sum_{n<-cx,\,m< M}$ is positive, we have $\ord\nolimits_{\rmi\infty}\partial\widehat{A}_\ell(-x)\vert_2\gamma=0$ and hence the required assertion holds in this case. Now assume that $cx\not\in\Z$. Then $C=0$. The exponent of $q$ of the leading term of the sum $\sum_{n>-cx,\,m\geq M}$ is equal to
$$
e_1=\frac{\ell}{2}(\lceil-cx\rceil+cx)^2+(\lceil-cx\rceil+cx)\cdot(\lceil M\rceil-M)
$$
and that of $\sum_{n<-cx,\,m< M}$ is equal to
$$
e_2=\frac{\ell}{2}(\lceil-cx\rceil+cx-1)^2+(\lceil-cx\rceil+cx-1)\cdot(\lceil M\rceil-M-1).
$$
Therefore $\ord\nolimits_{\rmi\infty}\partial\widehat{A}_l(-x)\vert_2\gamma=\min\{e_1,e_2\}$ since $e_1\neq e_2$. It remains to compare $e_1$ and $e_2$. Since
$$
e_1-e_2=(\ell+1)(\lceil-cx\rceil+cx)+\lceil M\rceil-M-(\frac{\ell}{2}+1)
$$
the desired assertion holds.
\end{proof}
Note that the case $(\ell+1)\cdot(\lceil-cx\rceil+cx)+\lceil M\rceil-M=\frac{\ell}{2}+1$ could not happen.

For any coprime integers $a,\,c$, we define $\ord_{a/c}f=\ord_{\rmi\infty}f\vert_2\tbtmat{a}{b}{c}{d}$, where $b,\,d$ are any integers satisfying $ad-bc=1$. This is well defined provided that $f$ satisfies the modular transformation equations on a subgroup of $\slZ$ of finite index. Thus Corollary \ref{coro:orderPartialAlhat} amounts to giving $\ord_{a/c}\partial\widehat{A}_l(-x)$ which, as one can easily see, is independent of $a$.

In the remainder we need another function related to $\partial\widehat{A}_\ell$, namely, the function
\begin{equation}
\label{eq:defglx}
g_{\ell,x}(\tau):=q^{-\frac{\ell}{2}x^2}\left(\partial\widehat{A}_\ell(x\tau;\tau)-2\pi i\ell x\widehat{A}_\ell(x\tau;\tau)\right).
\end{equation}
As before, $x$ is a nonintegral positive rational number and $k$ is a positive integer such that $kx\in\Z$.
\begin{prop}
\label{prop:glxModular}
We have $g_{\ell,x}=\partial\widehat{A}_l(-x)\vert_2\tbtmat{0}{-1}{1}{0}$. As a consequence, for any $\tbtmat{a}{b}{c}{d}\in\Gamma_1(k)$ we have
\begin{equation*}
g_{\ell,x}\left(k\frac{a\tau+b}{c\tau+d}\right)=(c\tau+d)^2e^{-\pi i\ell abkx^2}\cdot(-1)^{\ell((a-1)x+kbx)}g_{\ell,x}(k\tau).
\end{equation*}
\end{prop}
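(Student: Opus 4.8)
The plan is to prove the two assertions in sequence: first the identity $g_{\ell,x}=\partial\widehat{A}_\ell(-x)\vert_2\tbtmat{0}{-1}{1}{0}$, and then to bootstrap the transformation law from it by a conjugation inside $\slZ$.

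For the first identity I would specialize the decomposition \eqref{eq:pAlhatDecomp} (already established in the proof of Theorem~\ref{thm:partAlhatCusp}) to $S:=\tbtmat{0}{-1}{1}{0}$, where $c=1$, $d=0$, $cd=0$ and $q^{-\ell c^2x^2/2}=q^{-\ell x^2/2}$. This gives
\[
\partial\widehat{A}_\ell(-x)\vert_2 S=q^{-\frac{\ell}{2}x^2}\left(2\pi i\ell x\,\widehat{A}_\ell(-x\tau;\tau)+\partial\widehat{A}_\ell(-x\tau;\tau)\right).
\]
To match \eqref{eq:defglx} I would invoke the oddness $\widehat{A}_\ell(-u;\tau)=-\widehat{A}_\ell(u;\tau)$. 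This is not visible from the $q$-series of $A_\ell$ (which is odd only for $\ell=1$), but it drops out of \eqref{Alt} applied to $-I=\tbtmat{-1}{0}{0}{-1}\in\slZ$: there $c=0$, $c\tau+d=-1$, and the Gaussian factor is trivial, so \eqref{Alt} reads $\widehat{A}_\ell(-u;\tau)=-\widehat{A}_\ell(u;\tau)$. Differentiating this relation (the map $u\mapsto-u$ is holomorphic, so the Wirtinger chain rule applies) shows $\partial\widehat{A}_\ell$ is even in its first slot, whence $\widehat{A}_\ell(-x\tau;\tau)=-\widehat{A}_\ell(x\tau;\tau)$ and $\partial\widehat{A}_\ell(-x\tau;\tau)=\partial\widehat{A}_\ell(x\tau;\tau)$. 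Substituting these into the display reproduces $g_{\ell,x}$ exactly.

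For the transformation law, the crucial step is a matrix factorization. Put $V_k=\tbtmat{k}{0}{0}{1}$, so $k\tau=V_k\tau$ and $k\sigma\tau=V_k\sigma\tau$ for $\sigma=\tbtmat{a}{b}{c}{d}\in\Gamma_1(k)$. Since $g_{\ell,x}=\partial\widehat{A}_\ell(-x)\vert_2 S$, both $g_{\ell,x}(k\sigma\tau)$ and $g_{\ell,x}(k\tau)$ are governed by the values of $\partial\widehat{A}_\ell(-x)$ at $SV_k\sigma\tau=-1/(k\sigma\tau)$ and at $SV_k\tau=-1/(k\tau)$. A direct multiplication (using $k\mid c$ and $a\equiv d\equiv1\bmod k$, which also guarantees the membership) yields
\[
SV_k\sigma=\rho\,SV_k,\qquad \rho=\begin{pmatrix} d & -c/k \\ -kb & a\end{pmatrix}\in\Gamma_1(k).
\]
Consequently $-1/(k\sigma\tau)=\rho w$ with $w:=-1/(k\tau)$, and I would apply the $\Gamma_1(k)$-transformation \eqref{eq:transformationGamma1k} to $\partial\widehat{A}_\ell(-x)$ at $\rho w$: with lower row $c_\rho=-kb$, $d_\rho=a$, this produces the root of unity $(-1)^{\ell x(a-kb-1)}$, the exponential $e^{-\pi i\ell kabx^2}$ (from $c_\rho d_\rho=-kab$) and the automorphy factor $(-kbw+a)^2$.

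The rest is bookkeeping of automorphy factors. Writing $g_{\ell,x}(k\sigma\tau)=(k\sigma\tau)^{-2}\partial\widehat{A}_\ell(-x;\rho w)$ and $g_{\ell,x}(k\tau)=(k\tau)^{-2}\partial\widehat{A}_\ell(-x;w)$, substituting the transformation, and using $-kbw+a=(a\tau+b)/\tau$, one checks that the combined factor $(k\sigma\tau)^{-2}(-kbw+a)^2(k\tau)^2$ collapses to $(c\tau+d)^2$, giving $g_{\ell,x}(k\sigma\tau)=(c\tau+d)^2e^{-\pi i\ell kabx^2}(-1)^{\ell x(a-kb-1)}g_{\ell,x}(k\tau)$. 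Finally I would observe that $(-1)^{\ell x(a-kb-1)}=(-1)^{\ell((a-1)x+kbx)}$, since the two exponents differ by $2\ell kbx=2\ell bj\in2\Z$ (recall $kx=j$), so that $(-1)$ raised to the difference is $1$; this is the asserted form. The main obstacle I anticipate is spotting the factorization $SV_k\sigma=\rho SV_k$ with $\rho\in\Gamma_1(k)$ and then carrying the several intertwined automorphy factors through without sign or normalization errors; the oddness input, once \eqref{Alt} is fed $-I$, is routine.
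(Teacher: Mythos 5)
Your proof is correct, and its two halves relate to the paper's proof differently. For the transformation law you follow essentially the paper's own route: your factorization $SV_k\sigma=\rho\, SV_k$ with $\rho=\tbtmat{d}{-c/k}{-kb}{a}\in\Gamma_1(k)$ is the same matrix identity the paper writes as $\tbtmat{-c/k}{-d}{a}{bk}=\tbtmat{d}{-c/k}{-bk}{a}\tbtmat{0}{-1}{1}{0}$, and both arguments then apply \eqref{eq:transformationGamma1k} to $\rho$ and carry out the same automorphy-factor bookkeeping (the paper ends with the factor $(-1)^{\ell x(a-bk-1)}$ and leaves the identification with $(-1)^{\ell((a-1)x+kbx)}$ implicit; you make that parity check explicit, which is a welcome addition). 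Where you genuinely differ is the first identity $g_{\ell,x}=\partial\widehat{A}_\ell(-x)\vert_2\tbtmat{0}{-1}{1}{0}$: the paper stays inside the Jacobi-form formalism, applying \eqref{eq:pAlvertxabcd} with the matrix $\tbtmat{0}{1}{-1}{0}$ so that the elliptic operator becomes $[(x,0),1]$ and the shift $+x\tau$ appears directly, then finishing with the chain rule \eqref{eq:glxModulareq3} and weight-$2$ invariance under $-I$; you instead specialize the already-established scalar identity \eqref{eq:pAlhatDecomp} at $S$ (which produces the shift $-x\tau$) and repair the sign via oddness of $\widehat{A}_\ell(\cdot;\tau)$ and evenness of $\partial\widehat{A}_\ell(\cdot;\tau)$, correctly deduced from \eqref{Alt} at $-I$ together with the Wirtinger chain rule. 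The two arguments rest on the same underlying fact — Jacobi invariance under $-I$ is exactly the oddness you invoke — but yours avoids re-entering the slash-operator machinery at the cost of stating and justifying the symmetry of the completion explicitly; your remark that this symmetry fails for the uncompleted $A_\ell$ when $\ell>1$, and so must come from \eqref{Alt}, is precisely the right caution.
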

\begin{proof}
As in the proof of Theorem \ref{thm:partAlhatCusp}, we shall freely use the theory of Jacobi forms here. We can express $g_{\ell,x}$ in terms of $\widehat{A}_\ell$ as
\begin{equation}
\label{eq:glxModulareq1}
g_{\ell,x}(\tau)=(\partial\widehat{A}_\ell-2\pi i\ell x \widehat{A}_\ell)\vert_{2,-\ell/2}[(x,0),1](0;\tau).
\end{equation}
Setting $u=0$ and $\tbtmat{a}{b}{c}{d}=\tbtmat{0}{1}{-1}{0}$ in \eqref{eq:pAlvertxabcd} we find that
\begin{equation}
\label{eq:glxModulareq2}
\partial\widehat{A}_\ell(-x;\tau)\vert_2\tbtmat{0}{-1}{1}{0}=\partial\widehat{A}_\ell(-x;\tau)\vert_2\tbtmat{0}{1}{-1}{0}=\partial(\widehat{A}_\ell\vert_{1,-\ell/2}[(x,0),1])(0;\tau).
\end{equation}
By the chain rule for $\frac{\partial}{\partial u}$,
\begin{equation}
\label{eq:glxModulareq3}
\partial(\widehat{A}_\ell\vert_{1,-\ell/2}[(x,0),1])
=-2\pi i\ell x\widehat{A}_\ell\vert_{2,-\ell/2}[(x,0),1]+(\partial\widehat{A}_\ell)\vert_{2,-\ell/2}[(x,0),1]).
\end{equation}
Combining \eqref{eq:glxModulareq1}, \eqref{eq:glxModulareq2} and \eqref{eq:glxModulareq3} we obtain $g_{\ell,x}=\partial\widehat{A}_l(-x)\vert_2\tbtmat{0}{-1}{1}{0}$ as required. To prove the last assertion, let $\tbtmat{a}{b}{c}{d}\in\Gamma_1(k)$ be arbitrary. Then
\begin{align*}
(c\tau+d)^{-2}g_{\ell,x}\left(k\frac{a\tau+b}{c\tau+d}\right)&=(c\tau+d)^{-2}\left(k\frac{a\tau+b}{c\tau+d}\right)^{-2}\partial\widehat{A}_{\ell}\left(-x;-\frac{c\tau+d}{k(a\tau+b)}\right)\\
&=\partial\widehat{A}_{\ell}(-x)\vert_2\tbtmat{-c/k}{-d}{a}{bk}(k\tau)\\
&=\partial\widehat{A}_{\ell}(-x)\vert_2\tbtmat{d}{-c/k}{-bk}{a}\vert_2\tbtmat{0}{-1}{1}{0}(k\tau)\\
&=(-1)^{\ell x(a-bk-1)}e^{-\pi i\ell abkx^2}g_{\ell,x}(k\tau),
\end{align*}
where we have used \eqref{eq:transformationGamma1k} in the last equality.
\end{proof}

The holomorphic part and nonholomorphic part of $g_{\ell,x}(k\tau)$ at any cusp can be derived from the above proposition and Theorem \ref{thm:partAlhatCusp} of which we omit the details. What we need below are formulas for the orders of $g_{\ell,x}(k\tau)$ at cusps:
\begin{cor}
\label{cor:glxOrder}
Let $a,c$ be nonnegative coprime integers. Set $c'=\frac{ka}{(ka,c)}$ and $M'=\ell c'x+\frac{\ell}{2}$. If
\begin{equation*}
(\ell+1)\cdot(\lceil-c'x\rceil+c'x)+\lceil M'\rceil-M'>\frac{\ell}{2}+1,
\end{equation*}
then
\begin{equation*}
\ord\nolimits_{\frac{a}{c}}(g_{\ell,x}(k\tau))=\frac{(ka,c)^2}{k}\left(\frac{\ell}{2}(\lceil-c'x\rceil+c'x-1)^2+(\lceil-c'x\rceil+c'x-1)\cdot(\lceil M'\rceil-M'-1)\right).
\end{equation*}
On the other hand, if the reversed inequality holds, then
\begin{equation*}
\ord\nolimits_{\frac{a}{c}}(g_{\ell,x}(k\tau))=\frac{(ka,c)^2}{k}\left(\frac{\ell}{2}(\lceil-c'x\rceil+c'x)^2+(\lceil-c'x\rceil+c'x)\cdot(\lceil M'\rceil-M')\right).
\end{equation*}
\end{cor}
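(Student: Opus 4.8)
The plan is to reduce the whole computation to Corollary~\ref{coro:orderPartialAlhat} by combining the identity $g_{\ell,x}=\partial\widehat{A}_\ell(-x)\vert_2\tbtmat{0}{-1}{1}{0}$ from Proposition~\ref{prop:glxModular} with the standard factorization of an integral matrix of determinant $k$ into an element of $\slZ$ times an upper-triangular matrix. First I would recast $g_{\ell,x}(k\tau)$ as a slash of $\partial\widehat{A}_\ell(-x)$. Working with the weight-$2$ action of $\mathrm{GL}_2^+(\R)$, which is a genuine right action and changes a function only by a nonzero constant (so it leaves orders unaffected), and writing $V_k=\tbtmat{k}{0}{0}{1}$, one has $g_{\ell,x}(k\tau)=\tfrac1k\,g_{\ell,x}\vert_2 V_k=\tfrac1k\,\partial\widehat{A}_\ell(-x)\vert_2\bigl(\tbtmat{0}{-1}{1}{0}V_k\bigr)=\tfrac1k\,\partial\widehat{A}_\ell(-x)\vert_2\tbtmat{0}{-1}{k}{0}$. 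Consequently, for nonnegative coprime $a,c$ and any completion $\tbtmat{a}{b}{c}{d}\in\slZ$ with $ad-bc=1$,
\[
\ord\nolimits_{a/c}\bigl(g_{\ell,x}(k\tau)\bigr)=\ord\nolimits_{\rmi\infty}\bigl(\partial\widehat{A}_\ell(-x)\vert_2 A\bigr),\qquad A:=\tbtmat{0}{-1}{k}{0}\tbtmat{a}{b}{c}{d}=\tbtmat{-c}{-d}{ka}{kb},
\]
which is an integral matrix of determinant $k$.

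Next I would factor $A=\gamma T$ with $\gamma\in\slZ$ and $T=\tbtmat{\alpha}{\beta}{0}{\delta}$, $\alpha>0$, $\alpha\delta=k$. Since the first column of $A$ equals $\alpha$ times the (primitive) first column of $\gamma$, and that first column is $\binom{-c}{ka}$ with $\gcd(c,ka)=(ka,c)$, one reads off $\alpha=(ka,c)$, the lower-left entry of $\gamma$ equal to $ka/(ka,c)=c'$, and $\delta=k/(ka,c)$. Then $\partial\widehat{A}_\ell(-x)\vert_2 A=\bigl(\partial\widehat{A}_\ell(-x)\vert_2\gamma\bigr)\vert_2 T$. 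The relevant datum of $\gamma$ for Corollary~\ref{coro:orderPartialAlhat} is precisely its lower-left entry, here $c'$; applying that corollary with $c$ replaced by $c'$ and $M$ replaced by $M'=\ell c'x+\tfrac{\ell}{2}$ shows that the holomorphic part of $\partial\widehat{A}_\ell(-x)\vert_2\gamma$ has order equal to the bracketed expression appearing in the statement (the two cases of the inequality being inherited verbatim). Finally, the upper-triangular slash $\vert_2 T$ acts by $\tau\mapsto(\alpha\tau+\beta)/\delta$, sending each monomial $q^{n}$ to a nonzero constant times $q^{n\alpha/\delta}$; hence orders scale by $\alpha/\delta=(ka,c)^2/k$, which supplies exactly the prefactor in both cases and finishes the argument.

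The main obstacle I anticipate is not conceptual but a matter of bookkeeping around the notion of ``holomorphic part'' used in Theorem~\ref{thm:partAlhatCusp} and Corollary~\ref{coro:orderPartialAlhat}: I must check that passing to the $\mathrm{GL}_2^+$ slash and then applying $\vert_2 T$ does not disturb the holomorphic/nonholomorphic splitting. Here I would invoke Remark~\ref{rema:holoPart} and Proposition~\ref{prop:productHolo} to argue that $\vert_2 T$ commutes with taking the holomorphic part, the key point being that $\IM\bigl((\alpha\tau+\beta)/\delta\bigr)=(\alpha/\delta)\IM\tau$, so the substitution acts termwise on the already-separated Fourier series and preserves the vanishing-at-$\infty$ property of the nonholomorphic coefficients; therefore the leading term of the holomorphic part maps to the leading term of the new holomorphic part and the order genuinely scales by $\alpha/\delta$. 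I would also record the degenerate cases (for instance when $c'x\in\Z$, where Corollary~\ref{coro:orderPartialAlhat} forces order $0$) so that the stated formula is applied only in the regime where it is valid.
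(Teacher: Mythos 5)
Your proposal is correct and takes essentially the same route as the paper's proof: both express $g_{\ell,x}(k\tau)$ as $\tfrac1k\,\partial\widehat{A}_\ell(-x)\vert_2\tbtmat{0}{-1}{k}{0}$ via Proposition \ref{prop:glxModular}, factor the determinant-$k$ matrix $\tbtmat{0}{-1}{k}{0}\tbtmat{a}{b}{c}{d}=\tbtmat{-c}{-d}{ka}{kb}$ into an $\slZ$ element with lower-left entry $c'=ka/(ka,c)$ times an upper-triangular matrix with diagonal $\bigl((ka,c),k/(ka,c)\bigr)$, apply Corollary \ref{coro:orderPartialAlhat}, and scale orders by $(ka,c)^2/k$. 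The only difference is presentational: the paper exhibits the factorization explicitly with B\'ezout coefficients $a',b'$, whereas you infer its shape from the gcd of the first column, and you additionally spell out the holomorphic-part bookkeeping that the paper leaves implicit.
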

Before giving the proof, we extend the definition of slash operators a bit. Let $r\in\Z$. For a real matrix $\tbtmat{a}{b}{c}{d}$ with positive determinant and a function $f$ on $\uhp$, set $f\vert_r\tbtmat{a}{b}{c}{d}(\tau)=(ad-bc)^{r/2}\cdot(c\tau+d)^{-r}f\left(\frac{a\tau+b}{c\tau+d}\right)$. One can easily check that $f\vert_r(\gamma_1\gamma_2)=(f\vert_r\gamma_1)\vert_r\gamma_2$ for matrices $\gamma_1$ and $\gamma_2$ of positive determinants and that $g_{\ell,x}(k\tau)=k^{-1}g_{\ell,x}\vert_2\tbtmat{k}{0}{0}{1}(\tau)$.
\begin{proof}[Proof of Corollary \ref{cor:glxOrder}]
Let $a',b'$ be integers satisfying $(ka,c)=kaa'+cb'$ and $b,d$ be integers satisfying $ad-bc=1$. Then
\begin{equation}
\label{eq:MATk001abcd}
\MAT{0}{-1}{1}{0}\MAT{k}{0}{0}{1}\MAT{a}{b}{c}{d}=\MAT{-\frac{c}{(ka,c)}}{-a'}{\frac{ka}{(ka,c)}}{-b'}\MAT{(ka,c)}{ka'b+b'd}{0}{\frac{k}{(ka,c)}}
\end{equation}
with the first factor on the right-hand side in $\slZ$.
Thus, by Proposition \ref{prop:glxModular} we have
\begin{align*}
\ord\nolimits_{\frac{a}{c}}(g_{\ell,x}(k\tau))&=\ord\nolimits_{i\infty}(g_{\ell,x}\vert_2\tbtmat{k}{0}{0}{1}\vert_2\tbtmat{a}{b}{c}{d})\\
&=\ord\nolimits_{i\infty}(\partial\widehat{A}_\ell(-x)\vert_2\tbtmat{0}{-1}{1}{0}\tbtmat{k}{0}{0}{1}\tbtmat{a}{b}{c}{d})\\
&=\ord\nolimits_{i\infty}\left(\partial\widehat{A}_\ell(-x)\vert_2\tbtmat{-\frac{c}{(ka,c)}}{-a'}{\frac{ka}{(ka,c)}}{-b'}\tbtmat{(ka,c)}{ka'b+b'd}{0}{\frac{k}{(ka,c)}}\right)\\
&=\frac{(ka,c)^2}{k}\cdot\ord\nolimits_{-\frac{c}{ka}}\partial\widehat{A}_\ell(-x).
\end{align*}
The assertions follow from this and Corollary \ref{coro:orderPartialAlhat}.
\end{proof}

\subsection{The $U_{p,k}$ operator and its variant}

Let $p$ be a positive number and $k$ be an integer. The operator $U_{p,k}$ is given by
$$
U_{p,k}(f(\tau)):=\frac{1}{p}\sum_{m=0}^{p-1}\zeta_{p}^{-mk}f\left(\frac{\tau+m}{p}\right),
$$
where $f(\tau)$ is a function defined on $\uhp$.
If $f(\tau)$ has the Fourier expansion
$$
f(\tau)=\sum_{n\geq n_0}a(n)q^n,
$$
then one can easily verify
\begin{align}
\label{up}
U_{p,k}(g(q^p)f(\tau))=q^{\frac{k}{p}}g(q)\sum_{n\geq n_0-\left[\frac{k}{p}\right]}a(pn+k)q^n
\end{align}
for any $q$-series
$$
g(q)=\sum_{m\geq m_0}b(m)q^m.
$$

The following lemma relates the transformation equations of $U_{p,k}f$ to that of $f$. Let $\Gamma(p)$ denote the principal congruence subgroup of level $p$ consisting of integral modular matrices $\tbtmat{a}{b}{c}{d}$ with $a\equiv d\equiv1\pmod{p}$ and $b\equiv c\equiv0\pmod{p}$.
\begin{lemma}
\label{lemm:UpkTransformation}
Let $r$ be an even integer. Suppose $f$ satisfies $f\vert_r\gamma=\chi(\gamma)f$ for any $\gamma\in\Gamma_1(p)$ where $\chi$ is a linear character of $\Gamma_1(p)$ such that $\chi\tbtmat{a}{b/p}{cp}{d}=1$ for any $\MAT{a}{b}{c}{d}\in\Gamma(p)$. Then
\begin{equation*}
(U_{p,k}f)\vert_r\tbtmat{a}{b}{c}{d}=\zeta_p^{bk}\cdot U_{p,k}f,\quad\tbtmat{a}{b}{c}{d}\in\Gamma_1(p).
\end{equation*}
\end{lemma}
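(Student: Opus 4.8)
The plan is to reduce the statement to the standard coset bookkeeping for $U$-operators, the only novelty being how the hypothesis on $\chi$ forces the relevant character values to vanish. First I would rewrite $U_{p,k}$ through the weight-$r$ slash. Setting $V_m:=\tbtmat{1}{m}{0}{p}$, a one-line computation gives $f\bigl(\tfrac{\tau+m}{p}\bigr)=p^{r/2}\,(f\vert_r V_m)(\tau)$, so that
\begin{equation*}
U_{p,k}f=\frac{p^{r/2}}{p}\sum_{m=0}^{p-1}\zeta_p^{-mk}\,f\vert_r V_m.
\end{equation*}
Since $r$ is even, the slash is a genuine right action and I may apply $\vert_r\gamma$ for $\gamma=\tbtmat{a}{b}{c}{d}\in\Gamma_1(p)$ termwise, using the cocycle relation $f\vert_r(\alpha\beta)=(f\vert_r\alpha)\vert_r\beta$ for matrices of positive determinant (recorded after Corollary~\ref{cor:glxOrder}). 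This recasts the whole problem as understanding the products $V_m\gamma$.

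The central step is the factorization $V_m\gamma=\gamma_m V_{m'}$. Given $m$, I would let $m'\in\{0,\dots,p-1\}$ be the residue of $m+b$ modulo $p$ and set $\gamma_m:=V_m\gamma V_{m'}^{-1}$. A short computation yields
\begin{equation*}
\gamma_m=\MAT{a+mc}{\tfrac{1}{p}\bigl(b+md-(a+mc)m'\bigr)}{pc}{d-cm'}.
\end{equation*}
The congruences $a\equiv d\equiv1$, $c\equiv0\pmod p$ together with the choice $m'\equiv m+b\pmod p$ make the upper-right entry an integer, the determinant is automatically $1$, and the diagonal and lower-left entries are $\equiv1,1,0\pmod p$; hence $\gamma_m\in\Gamma_1(p)$. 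Therefore $f\vert_r\gamma_m=\chi(\gamma_m)f$, and because $m\mapsto m'$ is a permutation of $\{0,\dots,p-1\}$ we obtain
\begin{equation*}
(U_{p,k}f)\vert_r\gamma=\frac{p^{r/2}}{p}\sum_{m=0}^{p-1}\zeta_p^{-mk}\chi(\gamma_m)\,f\vert_r V_{m'}.
\end{equation*}

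The hard part, and the only place the hypothesis on $\chi$ enters, is to show $\chi(\gamma_m)=1$. I would prove this by conjugating with $W:=\tbtmat{p}{0}{0}{1}$: one finds
\begin{equation*}
W\gamma_m W^{-1}=\MAT{a+mc}{b+md-(a+mc)m'}{c}{d-cm'},
\end{equation*}
and checks entry by entry, using $m'\equiv m+b\pmod p$, that this matrix lies in $\Gamma(p)$. Writing $W\gamma_m W^{-1}=\tbtmat{\alpha}{\beta}{\gamma'}{\delta}\in\Gamma(p)$ gives $\gamma_m=\tbtmat{\alpha}{\beta/p}{\gamma'p}{\delta}$, which is precisely the shape on which $\chi$ is assumed trivial, so $\chi(\gamma_m)=1$. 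Finally I would reindex by $m'$: since $m\equiv m'-b\pmod p$ we have $\zeta_p^{-mk}=\zeta_p^{bk}\zeta_p^{-m'k}$, and substituting back recovers $\zeta_p^{bk}\,U_{p,k}f$, as claimed. The main obstacle is arranging the factorization so that $\gamma_m$ is simultaneously integral, lands in $\Gamma_1(p)$, and becomes trivial under $\chi$; once the role of the $W$-conjugation is identified, the remainder is routine bookkeeping and the extraction of the factor $\zeta_p^{bk}$ from the permuted summation index.
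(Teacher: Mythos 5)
Your proof is correct and takes essentially the same route as the paper's: the same factorization $\tbtmat{1}{m}{0}{p}\gamma=\gamma_m\tbtmat{1}{m'}{0}{p}$ with $\gamma_m\in\Gamma_1(p)$, the same conjugation by $\mathrm{diag}(p,1)$ (equivalently $\mathrm{diag}(1,p)$, since scalars are invisible under conjugation) to place $\gamma_m$ in the shape on which $\chi$ is trivial, and the same reindexing to extract $\zeta_p^{bk}$. The only cosmetic difference is that you reduce $m+b$ modulo $p$ inside the factorization, whereas the paper keeps the factor $\tbtmat{1}{m+b}{0}{p}$ unreduced and absorbs the reduction at the end using $\chi\tbtmat{1}{1}{0}{1}=1$.
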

\begin{proof}
As in the proof of \cite[Lemma 2.8]{Mao-24}, for $\tbtmat{a}{b}{c}{d}\in\Gamma_1(p)$,
\begin{equation}
\label{eq:MAT1m0pabcd}
\MAT{1}{m}{0}{p}\MAT{a}{b}{c}{d}=\MAT{a+cm}{\frac{(d-a)m+b(1-a)-cm(m+b)}{p}}{pc}{-c(m+b)+d}\MAT{1}{m+b}{0}{p},
\end{equation}
with the second factor on the right-hand side, denoted by $\gamma_m$ here, in $\Gamma_1(p)$. It follows directly from the assumption that $\chi(\gamma_m)=1$ since $\tbtmat{1}{0}{0}{p}^{-1}\gamma_m\tbtmat{1}{0}{0}{p}\in\Gamma(p)$. Therefore,
\begin{align*}
(U_{p,k}f)\vert_r\tbtmat{a}{b}{c}{d}&=p^{\frac{r}{2}-1}\sum_{0\leq m<p}\zeta_p^{-mk}f\vert_r\tbtmat{1}{m}{0}{p}\tbtmat{a}{b}{c}{d}\\
&=p^{\frac{r}{2}-1}\sum_{0\leq m<p}\zeta_p^{-mk}f\vert_r\gamma_m\vert_r\tbtmat{1}{m+b}{0}{p}\\
&=\zeta_p^{bk}p^{\frac{r}{2}-1}\sum_{0\leq m<p}\zeta_p^{-(m+b)k}\chi(\gamma_m)f\vert_r\tbtmat{1}{m+b}{0}{p}\\
&=\zeta_p^{bk}\cdot U_{p,k}f.
\end{align*}
The last equality relies on the fact that $\zeta_p^{-(m+b)k}f\vert_r\tbtmat{1}{m+b}{0}{p}=\zeta_p^{-(m+b-p)k}f\vert_r\tbtmat{1}{m+b-p}{0}{p}$ because $\chi\tbtmat{1}{1}{0}{1}=1$ by assumption and $\zeta_p^{pk}=1$.
\end{proof}
\begin{remark}
According to \eqref{eq:transformationGamma1k}, when $p$ is odd and $j\in\Z$ is not divisible by $p$, $f=\partial\widehat{A}_\ell(-j/p)$ satisfies the condition of the above lemma. Moreover, Lemma 2.8 of Mao \cite{Mao-24} is a special case of the above lemma. Our proof is essentially identical to Mao's.
\end{remark}

If $\chi$ is different from the one in the above lemma, it is still possible that $U_{p,k}f$ is modular on $\Gamma_1(p)$ provided that we modify $U_{p,k}$ slightly according to $\chi$. We make no attempt to present the most general definition but give the following one which is precisely what we need in the proof of the main theorem:
$$
U'_{p,k}(f(\tau)):=\frac{1}{p}\sum_{m=0}^{p-1}\zeta_{24p}^{-m(p-1)}\zeta_{p}^{-mk}f\left(\frac{\tau+m}{p}\right).
$$
One can check immediately that
\begin{equation}
\label{eq:UpkUpkprime}
U'_{p,k}(q^{\frac{p-1}{24}}(q^p;q^p)_\infty\cdot f)=q^{\frac{p-1}{24p}}(q;q)_\infty\cdot U_{p,k}f,
\end{equation}
so $U'_{p,k}$ is just a reformulation of $U_{p,k}$. However, when $f$ itself is not modular while $q^{-1/24}f$ is, then it is necessary to shift the focus from $U_{p,k}$ to $U'_{p,k}$ for proving modularity.
We now give the analogue of Lemma \ref{lemm:UpkTransformation}, which plays a key role in the proof of the main theorems, for $U'_{p,k}$ acting on certain concrete functions. Recall that the Dedekind eta function is defined by $\eta(\tau)=q^{1/24}(q;q)_\infty$.
\begin{lemma}
\label{lemm:UpkprimeTransformation}
Let $p$ be an odd positive integer not divisible by $3$ (say, $p$ is a prime $>3$) and $0\leq k < p$, $0<j<p$. Set
$$
f=U_{p,k}'\left(q^{\frac{p-1}{24}}\frac{(q^p;q^p)_\infty}{(q;q)_\infty}\partial\widehat{A}_\ell(-j/p)\right)=U_{p,k}'\left(\frac{\eta(p\tau)}{\eta(\tau)}\partial\widehat{A}_\ell(-j/p;\tau)\right).
$$
If there exists an integer $v$ and a positive integer $\ell_1$ such that $12\ell_1 v^2\equiv-24k+1\pmod{p}$ and $v\not\equiv0\pmod{p}$, then $f$ transforms like $\eta(\tau)\eta(p\tau)^{-1}g_{\ell_1,v/p}(p\tau)$ (c.f. \eqref{eq:defglx}), that is,
\begin{multline*}
(c\tau+d)^{-2}\eta\left(p\frac{a\tau+b}{c\tau+d}\right)\eta\left(\frac{a\tau+b}{c\tau+d}\right)^{-1}f\left(\frac{a\tau+b}{c\tau+d}\right)\\
=e^{-\frac{\pi i\ell_1 abv^2}{p}}\cdot(-1)^{\ell_1 v((a-1)/p+b)}\eta(p\tau)\eta(\tau)^{-1}f(\tau),\quad \MAT{a}{b}{c}{d}\in\Gamma_1(p).
\end{multline*}
In particular, if $-24k+1\equiv0\pmod{p}$, then $f$ transforms like $\eta(\tau)\eta(p\tau)^{-1}$.
\end{lemma}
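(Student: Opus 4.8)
The plan is to prove the displayed identity by showing that $\tfrac{\eta(p\tau)}{\eta(\tau)}f$ carries, under $\Gamma_1(p)$, exactly the weight-$2$ multiplier that Proposition \ref{prop:glxModular} attaches to $g_{\ell_1,v/p}(p\tau)$, namely $\mu_g(\gamma)=e^{-\pi i\ell_1 abv^2/p}(-1)^{\ell_1 v((a-1)/p+b)}$ for $\gamma=\MAT{a}{b}{c}{d}\in\Gamma_1(p)$ (the right-hand factor in the statement). Since the displayed equation is literally the assertion $(\tfrac{\eta(p\tau)}{\eta(\tau)}f)\vert_2\gamma=\mu_g(\gamma)\tfrac{\eta(p\tau)}{\eta(\tau)}f$, it suffices to compute the multiplier of $\tfrac{\eta(p\tau)}{\eta(\tau)}f$ under $\Gamma_1(p)$ and match it with $\mu_g$; note that both are genuine characters of $\Gamma_1(p)$ (weight $2$ is an honest action) valued in roots of unity of order dividing $24p$.

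First I would unfold $U'_{p,k}$. Writing $F=\tfrac{\eta(p\tau)}{\eta(\tau)}\partial\widehat{A}_\ell(-j/p)$ and treating it as a weight-$2$ object, one has $U'_{p,k}F=\sum_{m=0}^{p-1}\zeta_{24p}^{-m(p-1)}\zeta_p^{-mk}\,F\vert_2\MAT{1}{m}{0}{p}$. For $\gamma=\MAT{a}{b}{c}{d}\in\Gamma_1(p)$ I would apply the matrix identity \eqref{eq:MAT1m0pabcd} to write $\MAT{1}{m}{0}{p}\gamma=\gamma_m\MAT{1}{m+b}{0}{p}$ with $\gamma_m\in\Gamma_1(p)$, exactly as in the proof of Lemma \ref{lemm:UpkTransformation}. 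The role of the twist $\zeta_{24p}^{-m(p-1)}$ in $U'_{p,k}$ is that it renders the summand $\zeta_{24p}^{-m(p-1)}F\vert_2\MAT{1}{m}{0}{p}$ genuinely $p$-periodic in $m$: it cancels the factor $e^{\pi i(p-1)/12}$ that $F$ acquires under $\tau\mapsto\tau+1$ (coming from $\tfrac{\eta(p\tau)}{\eta(\tau)}$, since $\partial\widehat{A}_\ell(-j/p)$ is $1$-periodic by \eqref{Alt}). This legitimizes the reindexing $m\mapsto m+b$ after slashing and reduces everything to evaluating $F\vert_2\gamma_m$. Carrying this out yields $\tfrac{\eta(p\tau)}{\eta(\tau)}f$ transforming with the single multiplier $\mu_h(\gamma)=\mu_{\mathrm{eta}}(\gamma)\,\zeta_{24p}^{b(p-1)}\zeta_p^{bk}\,\mu_F(\gamma_0)$, where $\mu_{\mathrm{eta}}$ is the multiplier of $\tfrac{\eta(p\tau)}{\eta(\tau)}$ and $\mu_F(\gamma_m)=F\vert_2\gamma_m/F$, provided the latter is shown to be independent of $m$.

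The multiplier $\mu_F(\gamma_m)$ factors as the Appell--Lerch piece $\partial\widehat{A}_\ell(-j/p)\vert_2\gamma_m$, given by \eqref{eq:transformationGamma1k} with $k$ replaced by $p$, times the eta-quotient piece $\tfrac{\eta(p\tau)}{\eta(\tau)}\vert_0\gamma_m$ from the transformation of $\eta$ on $\Gamma_0(p)$. The internal claim needed is that the $m$-dependent roots of unity produced by the sign and exponential in \eqref{eq:transformationGamma1k} cancel against the $m$-dependence of the eta multiplier, so that $\mu_F(\gamma_m)$ is constant in $m$. Granting this, the final step is to verify $\mu_h=\mu_g$, and I expect the congruence $12\ell_1 v^2\equiv-24k+1\pmod p$ to enter precisely here: matching two order-$24p$ characters reduces to matching an exponent modulo $24p$, whose mod-$p$ part is exactly governed by $12\ell_1 v^2\equiv-24k+1$, while the mod-$24$ part follows from the elementary divisibilities $24\mid p^2-1$ and $2\mid v(p-v)$, valid since $p$ is odd and prime to $3$. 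As a concrete check on the generator $T=\MAT{1}{1}{0}{1}$ one finds $\mu_h(T)=\exp\!\big(\pi i(p^2-1+24k)/(12p)\big)$ and $\mu_g(T)=\exp\!\big(\pi i\ell_1 v(p-v)/p\big)$, and $(p^2-1+24k)-12\ell_1 v(p-v)\equiv0\pmod{24p}$ under the hypotheses; the special case $-24k+1\equiv0\pmod p$ then forces $\mu_h\equiv1$, i.e.\ $f$ transforms like $\eta(\tau)/\eta(p\tau)$.

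The hard part will be the eta-multiplier bookkeeping for general $\gamma$ with $c\neq0$: establishing the $m$-independence of $\mu_F(\gamma_m)$ and assembling $\mu_{\mathrm{eta}}(\gamma)\mu_F(\gamma_0)$ together with the $\zeta_{24p}$- and $\zeta_p$-twists into the clean $\ell_1,v$ form requires the Dedekind-sum (equivalently, Jacobi-symbol) expression for the transformation of $\tfrac{\eta(p\tau)}{\eta(\tau)}$ on $\Gamma_0(p)$ and a delicate cancellation. Conceptually, however, this parallels the proof of Lemma \ref{lemm:UpkTransformation}, the only genuinely new ingredient being the twist built into $U'_{p,k}$ that absorbs the noninteger $q$-exponent $\tfrac{p-1}{24}$; indeed, one may alternatively organize the computation through \eqref{eq:UpkUpkprime} and \eqref{up}, which isolate this exponent explicitly.
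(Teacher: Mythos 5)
Your plan follows the direct route that the paper itself mentions and then declines as ``rather tedious'': unfold $U'_{p,k}$, apply \eqref{eq:MAT1m0pabcd} to write $\tbtmat{1}{m}{0}{p}\gamma=\gamma_m\tbtmat{1}{m+b}{0}{p}$ with $\gamma_m\in\Gamma_1(p)$ (as in Lemma \ref{lemm:UpkTransformation}), use the $\zeta_{24p}$-twist to make the summand $p$-periodic in $m$, and then match multipliers. Your setup is correct, the periodicity observation is right, and your check at $T=\tbtmat{1}{1}{0}{1}$ is correct: writing $F=\frac{\eta(p\tau)}{\eta(\tau)}\partial\widehat{A}_\ell(-j/p)$, $\mu_F(\gamma_m)=F\vert_2\gamma_m/F$, $\mu_h$ for the multiplier of $\frac{\eta(p\tau)}{\eta(\tau)}U'_{p,k}F$ and $\mu_g$ for the multiplier in the lemma, one indeed gets $\mu_h(T)=e^{\pi i(p^2-1+24k)/(12p)}$, $\mu_g(T)=e^{\pi i\ell_1v(p-v)/p}$, and $(p^2-1+24k)-12\ell_1v(p-v)\equiv0\pmod{24p}$ follows from $12\ell_1v^2\equiv-24k+1\pmod p$, $24\mid p^2-1$ and $2\mid v(p-v)$. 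The problem is that everything hard is deferred, and the deferred part is the entire content of the lemma. Two claims are left unproved: (a) the $m$-independence of $\mu_F(\gamma_m)$, which you explicitly ``grant''; and (b) the equality $\mu_h(\gamma)=\mu_g(\gamma)$ for \emph{all} $\gamma\in\Gamma_1(p)$, which you verify only at $T$. Point (b) cannot be settled by the $T$-computation: for prime $p\geq5$ the group $\Gamma_1(p)$ is torsion-free, hence a free group of rank $2g+\varepsilon-1\geq3$ ($g$ the genus, $\varepsilon$ the number of cusps of $X_1(p)$), so its character group is large and two characters agreeing at $T$ need not coincide. The matrices with $c\neq0$ --- precisely where the Kronecker--Jacobi symbols in Petersson's formula \eqref{eq:etaChar} enter, and where (a) becomes nontrivial (it amounts to $\mu_F(\gamma_m\gamma_0^{-1})=1$ with $\gamma_m\gamma_0^{-1}=\tbtmat{1+acm+c^2m^2}{*}{pc^2m}{1-acm}$) --- are exactly the ``delicate cancellation'' you postpone. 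So, as a proof, this has a genuine gap rather than a routine verification left to the reader.

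For contrast, the paper's proof avoids this bookkeeping altogether by interpreting $U'_{p,k}$ as a generalized double coset operator $T_p$ attached to $\Gamma_1(p)\tbtmat{1}{0}{0}{p}\Gamma_1(p)$ in the sense of \cite[Section 3]{ZZ23}: by \cite[Proposition 3.2]{ZZ23}, $T_pf_0$ automatically carries the target character provided the source and target characters agree on the intersection $\tbtmat{1}{0}{0}{p}^{-1}\Gamma_1(p)\tbtmat{1}{0}{0}{p}\cap\Gamma_1(p)=\Gamma(p)$; and on $\Gamma(p)$ the Appell--Lerch characters $\phi_{\ell,j/p}$ (evaluated at the conjugated matrix $\tbtmat{a}{b/p}{cp}{d}$) and $\psi_{\ell_1,v/p}$ are both trivial, while the two eta-quotient characters are equal --- a short computation using \eqref{eq:etaChar} and $24\mid p^2-1$. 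The identification $T_p=U'_{p,k}$ is then exactly your $T$-computation, using the congruence $12\ell_1v^2\equiv-24k+1\pmod p$. In other words, your work at $T$ does appear in the paper, but as the identification of the operator, not as the character matching; the general-$\gamma$ matching that you leave open is what the double coset machinery dispatches by reducing it to $\Gamma(p)$. To complete your route you would have to prove (a) and (b) by hand from \eqref{eq:etaChar} and \eqref{eq:transformationGamma1k}, which is feasible but is precisely the tedious computation the paper's method was designed to avoid.
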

\begin{proof}
It is possible to produce a proof similar to that of Lemma \ref{lemm:UpkTransformation} but such a proof is rather tedious. The most concise proof makes use of the concept of generalized double coset operators introduced in \cite[Section 3]{ZZ23}. Set $f_0(\tau)=\frac{\eta(p\tau)}{\eta(\tau)}\partial\widehat{A}_\ell(-j/p;\tau)$. Let $\chi_{1^{-1}p^1}$, $\chi_{1^{1}p^{-1}}$, $\phi_{\ell, j/p}$, $\psi_{\ell_1, v/p}$ be the characters\footnote{If a nonzero function $f$ satisfies $f\vert_r\gamma=\chi(\gamma)f$ for any $\gamma\in G$ where $\chi(\gamma)\in\C$, $G$ is a finite index subgroup of $\slZ$ and $r$ is an integer, then the character of $f$ is defined to be the map $\gamma\mapsto\chi(\gamma)$ on $G$ which turns out to be a complex linear group character.} of $\eta(\tau)^{-1}\eta(p\tau)$, $\eta(\tau)\eta(p\tau)^{-1}$, $\partial\widehat{A}_\ell(-j/p;\tau)$ and $g_{\ell_1,v/p}(p\tau)$ respectively. The formulas for $\phi_{\ell, j/p}$ and $\psi_{\ell_1, v/p}$ have been given in \eqref{eq:transformationGamma1k} and Proposition \ref{prop:glxModular} respectively. For the formulas for $\chi_{1^{-1}p^1}$, $\chi_{1^{1}p^{-1}}$ see \cite[Eq. (15) and (16)]{ZZ23} or \eqref{eq:etaChar} which is due to H. Petersson. One can verify that $\Gamma_1(p)\tbtmat{1}{0}{0}{p}\Gamma_1(p)=\cup_{0\leq m <p}\Gamma_1(p)\tbtmat{1}{0}{0}{p}\tbtmat{1}{m}{0}{1}$ which is a disjoint union using, for instance, \eqref{eq:MAT1m0pabcd}. Thus, if we can prove that
\begin{equation}
\label{eq:toProveCharEq}
\chi_{1^{-1}p^1}\MAT{a}{b/p}{cp}{d}\phi_{\ell, j/p}\MAT{a}{b/p}{cp}{d}=\chi_{1^{1}p^{-1}}\MAT{a}{b}{c}{d}\psi_{\ell_1, v/p}\MAT{a}{b}{c}{d}
\end{equation}
for $\tbtmat{a}{b}{c}{d}\in\tbtmat{1}{0}{0}{p}^{-1}\Gamma_1(p)\tbtmat{1}{0}{0}{p}\cap\Gamma_1(p)=\Gamma(p)$, then
$$
T_pf_0:=\frac{1}{p}\sum_{0\leq m<p}\chi_{1^{1}p^{-1}}\cdot\psi_{\ell_1, v/p}\MAT{1}{m}{0}{1}^{-1}f_0\left(\frac{\tau+m}{p}\right)
$$
transforms like $\eta(\tau)\eta(p\tau)^{-1}g_{\ell_1,v/p}(p\tau)$ according to \cite[Proposition 3.2(3) and (4)]{ZZ23}. Since $12\ell_1 v^2\equiv-24k+1\pmod{p}$ and $p^2-1\equiv0\pmod{24}$ we have $\chi_{1^{1}p^{-1}}\cdot\psi_{\ell_1, v/p}\tbtmat{1}{m}{0}{1}^{-1}=\zeta_{24p}^{-m(p-1)}\zeta_{p}^{-mk}$  which means $T_p=U'_{p,k}$. It remains to prove \eqref{eq:toProveCharEq}. Let $\chi_\eta$ be the multiplier system of $\eta(\tau)$ (c.f. \eqref{eq:etaChar}, the Petersson's formula). Then
\begin{align*}
\chi_{1^{-1}p^1}\tbtmat{a}{b/p}{cp}{d}&=\chi_{\eta(p\tau)}\tbtmat{a}{b/p}{cp}{d}\chi_{\eta}^{-1}\tbtmat{a}{b/p}{cp}{d}\\
&=\chi_{\eta}\tbtmat{a}{b}{c}{d}\chi_{\eta}^{-1}\tbtmat{a}{bp}{c/p}{d}\\
&=\chi_{\eta}\tbtmat{a}{b}{c}{d}\chi_{\eta(p\tau)}^{-1}\tbtmat{a}{b}{c}{d}\\
&=\chi_{1^{1}p^{-1}}\tbtmat{a}{b}{c}{d},
\end{align*}
where $\chi_{\eta}^{-1}\tbtmat{a}{b/p}{cp}{d}=\chi_{\eta}^{-1}\tbtmat{a}{bp}{c/p}{d}$ follows from the fact $24\mid p^2-1$ and \eqref{eq:etaChar}. Thus the desired equality \eqref{eq:toProveCharEq} is equivalent to $\phi_{\ell, j/p}\tbtmat{a}{b/p}{cp}{d}=\psi_{\ell_1, v/p}\tbtmat{a}{b}{c}{d}$ which actually holds since
\begin{align*}
\phi_{\ell, j/p}\tbtmat{a}{b/p}{cp}{d}&=(-1)^{\frac{\ell j(pc+d-1)}{p}}e^{\frac{\pi i\ell j^2cd}{p}}=(-1)^{\ell j\left(c+\frac{d-1}{p}+\frac{cd}{p}\right)}=1
\end{align*}
and
\begin{align*}
\psi_{\ell_1, v/p}\tbtmat{a}{b}{c}{d}&=(-1)^{\ell_1 v\left(\frac{a-1}{p}+b\right)}e^{\frac{-\pi i\ell_1 v^2ab}{p}}=(-1)^{\ell_1 v\left(b+\frac{a-1}{p}-\frac{ab}{p}\right)}=1.
\end{align*}
\end{proof}

We end this subsection with a lemma concerning the orders of $U'_{p,k}f$ at cusps.
\begin{lemma}
\label{lemma:ordUpkprime}
Let the notation be as in Lemma \ref{lemm:UpkprimeTransformation} and let $a,c$ be coprime integers. Set $f_0=q^{\frac{p-1}{24}}\frac{(q^p;q^p)_\infty}{(q;q)_\infty}\partial\widehat{A}_\ell(-j/p)$. We have
\begin{equation*}
\ord\nolimits_{\frac{a}{c}}U'_{p,k}f_0\geq\min_{0\leq m< p}\frac{\gcd(a+cm,cp)^2}{p}\cdot\ord\nolimits_{\frac{a+cm}{cp}}f_0.
\end{equation*}
\end{lemma}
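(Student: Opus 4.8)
The plan is to rewrite $U'_{p,k}f_0$ as a finite linear combination of weight-$2$ slashes of $f_0$ and then treat the order of the holomorphic part at $\rmi\infty$ as a (super)valuation. Since $f_0=q^{\frac{p-1}{24}}\frac{(q^p;q^p)_\infty}{(q;q)_\infty}\partial\widehat{A}_\ell(-j/p)=\eta(p\tau)\eta(\tau)^{-1}\partial\widehat{A}_\ell(-j/p)$ has weight $2$, the extended slash operator defined after Corollary \ref{cor:glxOrder} gives $f_0\!\left(\frac{\tau+m}{p}\right)=p\cdot f_0\vert_2\tbtmat{1}{m}{0}{p}(\tau)$, whence
\[
U'_{p,k}f_0=\sum_{m=0}^{p-1}\zeta_{24p}^{-m(p-1)}\zeta_p^{-mk}\,f_0\vert_2\tbtmat{1}{m}{0}{p}.
\]
Fixing $\gamma=\tbtmat{a}{b}{c}{d}\in\slZ$ with $ad-bc=1$, the quantity $\ord_{a/c}U'_{p,k}f_0$ equals $\ord_{\rmi\infty}$ of $(U'_{p,k}f_0)\vert_2\gamma=\sum_m(\text{const})\,f_0\vert_2\tbtmat{1}{m}{0}{p}\gamma$. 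Because the holomorphic part of a sum is the sum of the holomorphic parts (Corollary \ref{coro:Hpart}) and a nonzero constant does not affect the order, the leading exponent of the sum is at least the minimum of the leading exponents of the summands, so
\[
\ord\nolimits_{a/c}U'_{p,k}f_0\ \geq\ \min_{0\le m<p}\ \ord\nolimits_{\rmi\infty}\!\left(f_0\vert_2\tbtmat{1}{m}{0}{p}\gamma\right).
\]

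Next I would evaluate each term on the right exactly as in the proof of Corollary \ref{cor:glxOrder}. Multiplying out, $\tbtmat{1}{m}{0}{p}\gamma=\tbtmat{a+cm}{b+dm}{cp}{dp}$, an integral matrix of determinant $p$. Writing $g=\gcd(a+cm,cp)$ and choosing $\sigma=\tbtmat{(a+cm)/g}{\beta}{cp/g}{\delta}\in\slZ$, a Hermite-type factorization
\[
\tbtmat{a+cm}{b+dm}{cp}{dp}=\sigma\,\tbtmat{g}{h}{0}{p/g}
\]
holds for a suitable integer $h$, with $\sigma(\rmi\infty)=\frac{a+cm}{cp}$. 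Thus $f_0\vert_2\tbtmat{1}{m}{0}{p}\gamma=\bigl(f_0\vert_2\sigma\bigr)\vert_2\tbtmat{g}{h}{0}{p/g}$, where $f_0\vert_2\sigma$ has $\ord_{\rmi\infty}=\ord_{(a+cm)/(cp)}f_0$ by the definition of the order at a cusp. Since passing from $f_0\vert_2\sigma$ to $(f_0\vert_2\sigma)\vert_2\tbtmat{g}{h}{0}{p/g}$ amounts, up to a nonzero constant, to the substitution $\tau\mapsto(g^2\tau+gh)/p$, every exponent in the holomorphic $q$-expansion is multiplied by $g^2/p$, giving
\[
\ord\nolimits_{\rmi\infty}\!\left(f_0\vert_2\tbtmat{1}{m}{0}{p}\gamma\right)=\frac{\gcd(a+cm,cp)^2}{p}\,\ord\nolimits_{\frac{a+cm}{cp}}f_0.
\]
Combining this with the displayed inequality yields the assertion.

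The step requiring the most care is justifying that $\ord_{\rmi\infty}$ truly behaves like a valuation for these real-analytic objects and that the $q$-expansion scaling above is legitimate. Two points must be verified. First, $f_0$ and each of its slashes must possess a well-defined holomorphic part in the sense of Remark \ref{rema:holoPart}: this follows because $\partial\widehat{A}_\ell(-j/p)\vert_2\sigma$ admits the explicit decomposition of Theorem \ref{thm:partAlhatCusp}, whose nonholomorphic part converges normally on $\{\tau\in\uhp\colon\IM\tau\ge Y_0\}$, and multiplying by the holomorphic, bounded-below $q$-series $\eta(p\tau)\eta(\tau)^{-1}$ preserves the decomposition by Proposition \ref{prop:productHolo}. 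Second, the substitution $\tau\mapsto(g^2\tau+gh)/p$ must carry the holomorphic part to a holomorphic part and the nonholomorphic part to a nonholomorphic part; this is immediate from the characterization in Remark \ref{rema:holoPart}, since the transformed holomorphic coefficients are again constants while $\IM\bigl((g^2\tau+gh)/p\bigr)=g^2\IM\tau/p\to+\infty$ forces the transformed nonholomorphic coefficients to tend to $0$. Once these facts are in place the remaining matrix bookkeeping is routine, and the inequality (rather than equality) is exactly what one should expect, since the leading terms among the $p$ summands may cancel.
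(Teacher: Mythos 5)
Your proof is correct and takes essentially the same approach as the paper's: you expand $U'_{p,k}f_0$ as a linear combination of the slashes $f_0\vert_2\tbtmat{1}{m}{0}{p}$, bound the order of the sum by the minimum over summands via Corollary \ref{coro:Hpart}, and factor $\tbtmat{1}{m}{0}{p}\tbtmat{a}{b}{c}{d}$ as an $\slZ$ matrix sending $\rmi\infty$ to $\frac{a+cm}{cp}$ times an upper triangular matrix, which rescales orders by $\gcd(a+cm,cp)^2/p$. The additional care you take with well-definedness of holomorphic parts under these operations is precisely what the paper records separately in Remark \ref{rema:ordUpkprime}.
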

\begin{proof}
Let $m\in\mathbb{Z}$ and $\tbtmat{a}{b}{c}{d}\in\slZ$. Set $g=\gcd(a+cm,cp)$. Then
\begin{equation*}
\MAT{1}{m}{0}{p}\MAT{a}{b}{c}{d}=\MAT{(a+cm)g^{-1}}{-y}{cpg^{-1}}{x}\MAT{g}{(b+dm)x+dpy}{0}{pg^{-1}},
\end{equation*}
where $x,y$ are integers with the property $(a+cm)x+cpy=g$. The first factor on the right-hand side is in $\slZ$ and the second is rational. Therefore
\begin{align*}
\ord\nolimits_{\frac{a}{c}}U'_{p,k}f_0&=\ord\nolimits_{\rmi\infty}(U'_{p,k}f_0)\vert_2\tbtmat{a}{b}{c}{d}\\
&\geq\min_{0\leq m< p}\ord\nolimits_{\rmi\infty}f_0\vert_2\tbtmat{1}{m}{0}{p}\vert_2\tbtmat{a}{b}{c}{d}\\
&=\min_{0\leq m< p}\ord\nolimits_{\rmi\infty}f_0\vert_2\tbtmat{(a+cm)g^{-1}}{-y}{cpg^{-1}}{x}\vert_2\tbtmat{g}{(b+dm)x+dpy}{0}{pg^{-1}}\\
&=\min_{0\leq m< p}\frac{\gcd(a+cm,cp)^2}{p}\cdot\ord\nolimits_{\frac{a+cm}{cp}}f_0.
\end{align*}
\end{proof}
\begin{remark}
\label{rema:ordUpkprime}
To calculate $\ord\nolimits_{\frac{a+cm}{cp}}f_0=\ord_{i\infty}f_0\vert_2\gamma$ where $\gamma\in\slZ$ such that $\gamma(i\infty)=\frac{a+cm}{cp}$, let $H_1$ be the holomorphic part of $\partial\widehat{A}_\ell(-j/p)\vert_2\gamma$. By Theorem \ref{thm:partAlhatCusp}, the series expression of $H_1$ and that of the nonholomorphic part converge normally on $\{\tau\in\uhp\colon\IM\tau\geq Y_0\}$ for any $Y_0>0$. Therefore, the holomorphic part of $f_0\vert_2\gamma$ is equal to $\eta(p\gamma\tau)\eta(\gamma\tau)^{-1}\cdot H_1(\tau)$ by Proposition \ref{prop:productHolo} and consequently, one can calculate $\ord\nolimits_{\frac{a+cm}{cp}}f_0$ using Corollary \ref{coro:orderPartialAlhat}. Finally, note that the fact $\ord\nolimits_{\frac{a}{c}}U'_{p,k}f_0$ is well defined follows from Corollary \ref{coro:Hpart} and Proposition \ref{prop:productHolo}.
\end{remark}

\subsection{Generalized Dedekind eta functions}
To cancel the multiplier system of $N_p(s,k)$ in Theorem \ref{mainN}, we need the generalized Dedekind eta functions. Let $p$ be a positive integer and $\delta\in\Z$. Set
\begin{equation}
\label{eq:gEta}
\eta_{p,\delta}(\tau)=q^{\frac{p}{2}P_2\left(\frac{\delta}{p}\right)}\cdot\prod_{\substack{n\equiv-\delta\bmod{p} \\ n\geq p-\delta}}(1-q^n)\prod_{\substack{n\equiv\delta\bmod{p} \\ n\geq\delta}}(1-q^n)=q^{\frac{p}{2}P_2\left(\frac{\delta}{p}\right)}\cdot(q^\delta,q^{p-\delta};q^p)_\infty,
\end{equation}
where $P_2(x)=x^2-x+\frac{1}{6}$ is the second Bernoulli polynomial. Since $\eta_{p,\delta}=-\eta_{p,\delta+p}$ we assume $0\leq\delta<p$ without loss of generality. When $p\mid\delta$ we have $\eta_{p,\delta}=0$ which differs from some authors' definition\footnote{We adopt the definition \eqref{eq:gEta} since they are specializations of the weight $1/2$, index $1/2$ Jacobi form $\theta(z;\tau)$ uniformly for all $p$ and $\delta$.}. It is known that $\eta_{p,\delta}$ is a modular function on $\Gamma_1(p)$ and is holomorphic on $\uhp$, possibly with a multiplier system (character) which is denoted by $\chi_{\eta,p,\delta}$ hereafter (c.f. \cite{Rob-94}). S. Robins \cite{Rob-94} also gives the order of $\eta_{p,\delta}$ (for $p\nmid\delta$) at the cusp $\frac{a}{c}$, namely,
\begin{equation}
\label{eq:ordGEta}
\ord\nolimits_{\frac{a}{c}}\eta_{p,\delta}=\frac{(p,c)^2}{2p}\overline{P}_2\left(\frac{a\delta}{(p,c)}\right),
\end{equation}
where $\overline{P}_2(x)=P_2(x-[x])$ is the second Bernoulli function. On the other hand, although there are known formulas for $\chi_{\eta,p,\delta}$ expressed in terms of Meyer sums (c.f. \cite{Mey-57}), we need an alternative exact formula which is expressed in terms of Petersson's formula for the multiplier system $\chi_\eta$ of Dedekind eta function:
\begin{equation}
\label{eq:etaChar}
\chi_\eta\MAT{a}{b}{c}{d}=\begin{cases}
\leg{d}{\abs{c}}\exp{\frac{\pi i}{12}\left((a+d-3)c-bd(c^2-1)\right)}   & \text{if }2 \nmid c, \\
\leg{c}{d}\exp{\frac{\pi i}{12}\left((a-2d)c-bd(c^2-1)+3d-3\right)}   & \text{if }2 \mid c,
\end{cases}
\end{equation}
where $\leg{\cdot}{\cdot}$ is the Kronecker-Jacobi symbol. For a proof of \eqref{eq:etaChar}, see \cite{Kno-70}.
\begin{lemma}
\label{eq:charGEta}
Let $\tbtmat{a}{b}{c}{d}\in\Gamma_1(p)$. Then
\begin{equation*}
\chi_{\eta,p,\delta}\MAT{a}{b}{c}{d}:=\left.\eta_{p,\delta}\left(\frac{a\tau+b}{c\tau+d}\right)\middle/\eta_{p,\delta}(\tau)\right.=\chi_\eta^2\MAT{a}{pb}{c/p}{d}e^{\frac{\pi iab\delta^2}{p}}(-1)^{\frac{(a-1)\delta}{p}+b\delta}
\end{equation*}
\end{lemma}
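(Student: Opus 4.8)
The plan is to reduce everything to the classical Dedekind eta function and the Jacobi theta function \eqref{eq:triProd}, whose transformation laws are known, and then to track the resulting automorphy factors. The starting point is the identity
\begin{equation*}
\eta_{p,\delta}(\tau)=i\,e^{\frac{\pi i\delta^2\tau}{p}}\,\eta(p\tau)^{-1}\,\theta(\delta\tau;p\tau),
\end{equation*}
which I would verify by specializing the product formula \eqref{eq:triProd} at elliptic variable $z=\delta\tau$ and modular variable $p\tau$: this produces $(q^p;q^p)_\infty(q^\delta,q^{p-\delta};q^p)_\infty$ up to an explicit power of $q$ and the constant $-i$, and after writing $(q^p;q^p)_\infty=q^{-p/24}\eta(p\tau)$ and collecting the $q$-powers against $q^{\frac{p}{2}P_2(\delta/p)}$ the exponent of the prefactor collapses to $\delta^2/(2p)$. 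The overall constant $i$ is immaterial, since it cancels in the ratio defining $\chi_{\eta,p,\delta}$; only the $\tau$-dependent prefactor $e^{\pi i\delta^2\tau/p}$ needs to be carried along.

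Next, fix $\gamma=\MAT{a}{b}{c}{d}\in\Gamma_1(p)$, so that $c/p,\,(a-1)/p\in\Z$ and $\gamma'=\tbtmat{a}{pb}{c/p}{d}\in\slZ$, and feed $\gamma\tau$ into the three factors of the identity. For the eta factor, conjugation by $\tbtmat{p}{0}{0}{1}$ gives $\eta(p\gamma\tau)=\chi_\eta(\gamma')(c\tau+d)^{1/2}\eta(p\tau)$. For the theta factor, I would use that $\theta$ transforms as a Jacobi form of weight $\tfrac12$ and index $\tfrac12$ whose modular multiplier is the \emph{cube} of the eta multiplier (one checks this on the two generators $T,S$ against \cite{Zw-19}), namely
\begin{equation*}
\theta\Big(\tfrac{Z}{(c/p)w+d};\gamma'w\Big)=\chi_\eta(\gamma')^{3}\big((c/p)w+d\big)^{1/2}e^{\frac{\pi i(c/p)Z^2}{(c/p)w+d}}\theta(Z;w),\qquad w=p\tau,
\end{equation*}
where the choice $Z=\delta(a\tau+b)$ makes the left elliptic argument equal to $\delta\gamma\tau$. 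Finally I would apply the elliptic transformation $\theta(z+\lambda w+\mu;w)=(-1)^{\lambda+\mu}e^{-\pi i\lambda^2 w-2\pi i\lambda z}\theta(z;w)$ to move $Z=\tfrac{\delta}{p}w+\lambda w+\mu$ back to $\tfrac{\delta}{p}w=\delta\tau$, with $\lambda=\delta(a-1)/p$ and $\mu=\delta b$ integers; this is precisely where $a\equiv1\pmod p$ is used, and it produces the sign $(-1)^{\lambda+\mu}=(-1)^{(a-1)\delta/p+b\delta}$. Combining the factor $\chi_\eta(\gamma')^{-1}$ from $\eta(p\tau)^{-1}$ with $\chi_\eta(\gamma')^{3}$ from $\theta$ yields $\chi_\eta(\gamma')^{2}=\chi_\eta^2\MAT{a}{pb}{c/p}{d}$, while the two weight-$\tfrac12$ factors $(c\tau+d)^{\mp1/2}$ cancel, consistent with $\eta_{p,\delta}$ having weight $0$.

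It remains to show that all accumulated exponentials collapse to $e^{\pi iab\delta^2/p}$. Writing the total exponent (divided by $\pi i$) as
\begin{equation*}
\frac{\delta^2}{p}\gamma\tau+\frac{(c/p)Z^2}{c\tau+d}-\lambda^2 w-2\lambda\tfrac{\delta}{p}w-\frac{\delta^2}{p}\tau
\end{equation*}
and substituting $w=p\tau$, $Z=\delta(a\tau+b)$, $\lambda=\delta(a-1)/p$, I would factor out $\delta^2/p$; the $\tau$-linear terms assemble into $-a^2\tau$ via $(a-1)^2+2(a-1)+1=a^2$, and putting the two remaining terms over the common denominator $c\tau+d$ the numerator simplifies to $ab(c\tau+d)$, so the bracket equals $ab$. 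I expect this final algebraic collapse to be the main computational obstacle, as it requires repeated use of $ad-bc=1$ (in the forms $a^2d=a+abc$ and $1+bc=ad$) to cancel the $\tau^2$ and the stray $\tau$ terms; the main conceptual point is pinning down the exact cube-of-$\chi_\eta$ multiplier and branch in the theta transformation, so that its product with $\chi_\eta^{-1}$ is the unambiguous root-of-unity value $\chi_\eta^2\MAT{a}{pb}{c/p}{d}$ appearing in the claim.
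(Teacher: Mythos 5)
Your proposal is correct and follows essentially the same route as the paper's own proof: both express $\eta_{p,\delta}$ via the factorization $\eta_{p,\delta}(\tau)=i\,e^{\pi i\delta^2\tau/p}\eta(p\tau)^{-1}\theta(\delta\tau;p\tau)$ (the paper packages this as $f_{\delta/p}(p\tau)=-i\eta(p\tau)\eta_{p,\delta}(\tau)$ with $f_{\delta/p}(\tau)=q^{\delta^2/2p^2}\theta(\tfrac{\delta}{p}\tau;\tau)$), conjugate by $\tbtmat{p}{0}{0}{1}$ to pass to $\gamma'=\tbtmat{a}{pb}{c/p}{d}$, apply the weight-$\tfrac12$ modular transformation of $\theta$ with multiplier $\chi_\eta^3$ followed by the elliptic shift with $\lambda=\delta(a-1)/p$, $\mu=b\delta$, and collapse the accumulated exponentials to $e^{\pi iab\delta^2/p}$ using $ad-bc=1$. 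Your final algebraic simplification (the numerator reducing to $ab(c\tau+d)$) does check out, so the argument is complete.
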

\begin{proof}
Set $f_{\delta/p}(\tau)=q^{\frac{\delta^2}{2p^2}}\theta(\frac{\delta}{p}\tau;\tau)$. We have $f_{\delta/p}(p\tau)=-i\eta(p\tau)\eta_{p,\delta}(\tau)$ by \eqref{eq:triProd}. The required assertion will follow from the well known modular transformation equations of $\theta$:
\begin{align*}
\theta\left(\frac{z}{c\tau+d};\frac{a\tau+b}{c\tau+d}\right)&=(c\tau+d)^{1/2}e^{\pi i\frac{c}{c\tau+d}z^2}\cdot\chi_\eta^3\MAT{a}{b}{c}{d}\theta(z;\tau),\quad\MAT{a}{b}{c}{d}\in\slZ,\\
\theta(z+\lambda\tau+\mu;\tau)&=q^{-\frac{1}{2}\lambda^2}e^{-2\pi i \lambda z}(-1)^{\lambda+\mu}\theta(z;\tau),\quad\lambda,\mu\in\Z,
\end{align*}
which can be proved, for instance, using \cite[Lemma 2.2]{BFOR-17} and the facts $\chi_\eta\tbtmat{1}{1}{0}{1}=\zeta_{24}$ and $\chi_\eta\tbtmat{0}{-1}{1}{0}=\zeta_8^{-1}$. Note that $z^{1/2}$ must be understood as the principal branch, that is, $z^{1/2}=\exp\frac{1}{2}\log z$ with $-\pi <\IM\log z \leq \pi$. It follows that, for $\tbtmat{a}{b}{c}{d}\in\Gamma_1(p)$,
\begin{align*}
f_{\delta/p}&\left(\frac{a\tau+bp}{cp^{-1}\tau+d}\right)=(cp^{-1}\tau+d)^{1/2}e^{\frac{\pi i\delta^2(a\tau+bp)a}{p^2}}\chi_\eta^3\tbtmat{a}{bp}{cp^{-1}}{d}\theta\left(\frac{\delta}{p}\tau+\frac{a-1}{p}\delta\tau+b\delta;\tau\right)\\
&=(cp^{-1}\tau+d)^{1/2}e^{\frac{\pi i\delta^2(a\tau+bp)a}{p^2}}\chi_\eta^3\tbtmat{a}{bp}{cp^{-1}}{d}q^{-\frac{1}{2}\left(\frac{a-1}{p}\delta\right)^2}e^{-2\pi i\frac{(a-1)\delta^2\tau}{p^2}}(-1)^{\frac{a-1}{p}\delta+b\delta}\theta\left(\frac{\delta}{p}\tau;\tau\right)\\
&=(cp^{-1}\tau+d)^{1/2}\chi_\eta^3\tbtmat{a}{bp}{cp^{-1}}{d}e^{\frac{\pi i ab\delta^2}{p}}(-1)^{\frac{a-1}{p}\delta+b\delta}f_{\delta/p}(\tau).
\end{align*}
Therefore,
\begin{align*}
\eta_{p,\delta}\vert_0\tbtmat{a}{b}{c}{d}&=i\cdot(f_{\delta/p}\eta^{-1})\vert_0\tbtmat{p}{0}{0}{1}\tbtmat{a}{b}{c}{d}\\
&=i\cdot(f_{\delta/p}\eta^{-1})\vert_0\tbtmat{a}{bp}{c/p}{d}\tbtmat{p}{0}{0}{1}\\
&=\chi_\eta^2\tbtmat{a}{bp}{cp^{-1}}{d}e^{\frac{\pi i ab\delta^2}{p}}(-1)^{\frac{a-1}{p}\delta+b\delta}\cdot i(f_{\delta/p}\eta^{-1})\vert_0\tbtmat{p}{0}{0}{1}\\
&=\chi_\eta^2\tbtmat{a}{bp}{cp^{-1}}{d}e^{\frac{\pi i ab\delta^2}{p}}(-1)^{\frac{a-1}{p}\delta+b\delta}\eta_{p,\delta},
\end{align*}
which concludes the proof.
\end{proof}

Combining the techniques developed above in this section we obtain:
\begin{theorem}
\label{thm:AppellGEtaModular}
Let $p$ be an odd positive integer not divisible by $3$ and $0\leq k < p$, $0<j<p$. Let $r_1,\dots,r_{(p-1)/2}$ be integers. If
\begin{equation}
\label{eq:rvrelation}
12\cdot\sum_{\delta=1}^{(p-1)/2}r_\delta\cdot \delta^2\equiv -24k+1 \pmod{p},
\end{equation}
then the real analytic function
\begin{equation*}
f:=U_{p,k}\left(\frac{\partial\widehat{A}_\ell(-j/p)}{(q;q)_\infty}\right)\cdot q^{n_0}(q^p;q^p)_\infty^{1-2\sum r_\delta}\cdot\prod_{\delta=1}^{(p-1)/2}(q^\delta,q^{p-\delta};q^p)_\infty^{r_\delta},
\end{equation*}
where
\begin{equation*}
n_0=\frac{p^2-1}{24p}+\frac{1}{2p}\sum_{\delta=1}^{(p-1)/2}r_\delta\cdot\left(\delta^2-p\delta\right),
\end{equation*}
satisfies that
\begin{equation}
\label{eq:UpkGEtaModular}
f\left(\frac{a\tau+b}{c\tau+d}\right)=(c\tau+d)^{2-\sum\nolimits_\delta r_\delta}\cdot f(\tau),\quad \MAT{a}{b}{c}{d}\in\Gamma_1(p),
\end{equation}
and vice versa.
\end{theorem}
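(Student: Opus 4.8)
The plan is to strip $f$ of all fractional powers of $q$, rewriting it as the $U'_{p,k}$-image of a single Appell--Lerch-type function times an honest quotient of ordinary and generalized Dedekind eta functions, and then to assemble the (individually known) multiplier systems of the factors and check that their product is trivial on $\Gamma_1(p)$ precisely when \eqref{eq:rvrelation} holds. First I would set $f_0=\tfrac{\eta(p\tau)}{\eta(\tau)}\partial\widehat{A}_\ell(-j/p)=q^{\frac{p-1}{24}}(q^p;q^p)_\infty\cdot\frac{\partial\widehat{A}_\ell(-j/p)}{(q;q)_\infty}$, so that \eqref{eq:UpkUpkprime} gives $U_{p,k}\bigl(\frac{\partial\widehat{A}_\ell(-j/p)}{(q;q)_\infty}\bigr)=q^{1/(24p)}\eta(\tau)^{-1}U'_{p,k}(f_0)$. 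Substituting $(q^p;q^p)_\infty=q^{-p/24}\eta(p\tau)$ and $(q^\delta,q^{p-\delta};q^p)_\infty=q^{-\frac p2 P_2(\delta/p)}\eta_{p,\delta}$ from \eqref{eq:gEta} into the eta-product factor and inserting the explicit value of $n_0$, a direct bookkeeping shows that all the $\delta^2$-, $\delta$- and $\sum_\delta r_\delta$-contributions cancel, leaving exactly the residual power $q^{-1/(24p)}$. Hence
\[
f=U'_{p,k}(f_0)\cdot\eta(\tau)^{-1}\eta(p\tau)^{1-2\sum_\delta r_\delta}\prod_{\delta=1}^{(p-1)/2}\eta_{p,\delta}^{\,r_\delta}.
\]
Verifying this collapse (i.e.\ that $n_0$ is chosen precisely to make $f$ an eta-quotient times $U'_{p,k}(f_0)$) is the computational heart of the first step.

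This form immediately pins down the weight: $U'_{p,k}(f_0)$ has weight $2$, the factor $\eta(\tau)^{-1}\eta(p\tau)^{1-2\sum_\delta r_\delta}$ has weight $-\tfrac12+\tfrac12(1-2\sum_\delta r_\delta)=-\sum_\delta r_\delta$, and each $\eta_{p,\delta}$ has weight $0$, so $f$ has weight $2-\sum_\delta r_\delta$. For the multiplier I would choose a positive integer $\ell_1$ and an integer $v\not\equiv0\pmod p$ with $12\ell_1 v^2\equiv-24k+1\pmod p$ (such a pair exists exactly because \eqref{eq:rvrelation} holds, e.g.\ $v=1$ and $\ell_1=\sum_\delta r_\delta\delta^2+pM$ for large $M$). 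By Lemma \ref{lemm:UpkprimeTransformation}, $U'_{p,k}(f_0)$ then transforms like $\eta(\tau)\eta(p\tau)^{-1}g_{\ell_1,v/p}(p\tau)$, so its multiplier is $\chi_\eta\,\chi_{\eta(p\cdot)}^{-1}\,\psi_{\ell_1,v/p}$, where $\psi_{\ell_1,v/p}$ is read off from Proposition \ref{prop:glxModular}. Using Petersson's formula \eqref{eq:etaChar} for the multipliers of $\eta(\tau)$ and of $\eta(p\tau)$ (the latter evaluated at $\tbtmat{a}{pb}{c/p}{d}$) and Lemma \ref{eq:charGEta} for that of $\eta_{p,\delta}$, the two occurrences of $\chi_\eta^{\pm2R}\tbtmat{a}{pb}{c/p}{d}$ (from $\eta(p\tau)^{-2\sum_\delta r_\delta}$ and from $\prod_\delta\eta_{p,\delta}^{r_\delta}$) cancel, and the multiplier of $f$ at $\tbtmat{a}{b}{c}{d}\in\Gamma_1(p)$ collapses to
\[
e^{\pi i ab(S_2-\ell_1 v^2)/p}\cdot(-1)^{(S_1+\ell_1 v)\left(\frac{a-1}{p}+b\right)},\qquad S_1=\sum_\delta r_\delta\delta,\ S_2=\sum_\delta r_\delta\delta^2.
\]

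The hard part, and the main obstacle, is showing this sign is always $1$. Under \eqref{eq:rvrelation} we have $S_2\equiv\ell_1 v^2\pmod p$, so the exponential becomes a sign; specializing to $v=1$, $\ell_1=S_2+pM$ turns the whole expression into $(-1)^{(S_1+S_2)(\frac{a-1}{p}+b)}$ times $(-1)^{M(ab+\frac{a-1}{p}+b)}$. Triviality then rests on two elementary parity facts. First, $S_1+S_2=\sum_\delta r_\delta\,\delta(\delta+1)$ is even because $\delta(\delta+1)$ is. Second, for $\tbtmat{a}{b}{c}{d}\in\Gamma_1(p)$ the relation $ad-bc=1$ forces $\frac{a-1}{p}(b+1)$ to be even, and likewise $ab+\frac{a-1}{p}+b\equiv\frac{a-1}{p}(b+1)\pmod 2$: indeed $a\equiv1\pmod p$ with $\frac{a-1}{p}$ odd makes $a$ even, whence $ad$ is even, $bc$ is odd, and $b$ is odd. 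These two facts kill both signs (the $M$-dependence in particular drops out, confirming consistency with the choice-independence of $\psi_{\ell_1,v/p}$ on $\Gamma_1(p)$), so the multiplier is trivial and $f$ satisfies \eqref{eq:UpkGEtaModular}.

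For the converse I would avoid $\ell_1,v$ altogether and compute the multiplier directly on the single element $T=\tbtmat{1}{1}{0}{1}$, for which the weight factor $(c\tau+d)^{2-\sum r_\delta}$ is $1$. From the definition of $U'_{p,k}$ together with the periodicity of $\widehat{A}_\ell$ in $\tau$ one gets $U'_{p,k}(f_0)(\tau+1)=\zeta_{24p}^{\,p-1}\zeta_p^{\,k}\,U'_{p,k}(f_0)(\tau)$, and combining this with \eqref{eq:etaChar} and Lemma \ref{eq:charGEta} yields $f(\tau+1)=(-1)^{S_1}e^{\pi i E_0}f(\tau)$ with $E_0=\frac{p^2-1+24k+12S_2}{12p}$. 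Since $24\mid p^2-1$, one checks that $E_0\in\Z$ if and only if $12\sum_\delta r_\delta\delta^2\equiv-24k+1\pmod p$, and that $e^{\pi iE_0}\notin\{\pm1\}$ whenever $E_0\notin\Z$; hence $T$-invariance of $f$ (a consequence of \eqref{eq:UpkGEtaModular}) forces \eqref{eq:rvrelation}. This establishes both directions of the stated equivalence.
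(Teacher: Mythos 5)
Your proposal is correct, and for the forward implication it is essentially the paper's proof: you use the same reduction via \eqref{eq:UpkUpkprime} and \eqref{eq:gEta} to write $f$ as $U'_{p,k}(f_0)$ times the eta quotient $\eta(\tau)^{-1}\eta(p\tau)^{1-2\sum_\delta r_\delta}\prod_\delta\eta_{p,\delta}^{r_\delta}$ (your $q$-exponent bookkeeping checks out), the same three inputs (Lemma \ref{lemm:UpkprimeTransformation} together with Proposition \ref{prop:glxModular}, Petersson's formula \eqref{eq:etaChar}, and Lemma \ref{eq:charGEta}), the same cancellation of the factors $\chi_\eta^{\pm2\sum_\delta r_\delta}\tbtmat{a}{pb}{c/p}{d}$, and you land exactly on the paper's condition \eqref{eq:toProveEquiv}. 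The two differences in this direction are cosmetic: you fix the admissible pair $(v,\ell_1)=(1,\sum_\delta r_\delta\delta^2+pM)$ (legitimate, since Lemma \ref{lemm:UpkprimeTransformation} permits any valid pair), and you observe that $S_1+S_2=\sum_\delta r_\delta\,\delta(\delta+1)$ is always even, which replaces the paper's two-case split on the parity of $\sum_\delta r_\delta\delta-\ell_1 v$; both routes then rest on the same parity fact $ab+\frac{a-1}{p}+b\in2\Z$ for $\tbtmat{a}{b}{c}{d}\in\Gamma_1(p)$ (the paper's \eqref{eq:toProveEquiv2}), which you prove correctly. Where you genuinely diverge is the converse: the paper just sets $\tbtmat{a}{b}{c}{d}=\tbtmat{1}{1}{0}{1}$ in \eqref{eq:toProveEquiv}, so its converse still runs through the $(\ell_1,v)$-formulation and tacitly needs such a pair to exist \emph{without} assuming \eqref{eq:rvrelation} (true --- take $v=1$ and $\ell_1>0$ with $12\ell_1\equiv1-24k\pmod p$ --- but never stated); you instead compute the $T$-multiplier of $f$ directly from the definition of $U'_{p,k}$ and the $\tau\mapsto\tau+1$ behavior of each factor, getting $(-1)^{S_1}e^{\pi iE_0}$ with $E_0=\frac{p^2-1+24k+12S_2}{12p}$, and then use that $e^{\pi iE_0}\in\{\pm1\}$ forces $E_0\in\Z$, which (since $E_0=(2\cdot\frac{p^2-1}{24}+2k+S_2)/p$ and $-2\cdot\frac{p^2-1}{24}\equiv12^{-1}\pmod p$) is equivalent to \eqref{eq:rvrelation}. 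I verified this computation; your converse is more self-contained and sidesteps the existence point above, at the cost of one extra explicit calculation. One remark applying equally to you and to the paper: passing from \eqref{eq:UpkGEtaModular} to the vanishing of the multiplier implicitly assumes $f\not\equiv0$; since the paper makes the same tacit assumption, this is not a gap in your argument relative to the paper's standard.
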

\begin{proof}
By \eqref{eq:UpkUpkprime} and \eqref{eq:gEta} we can express $f$ in terms of generalized Dedekind eta functions as $f=f_1f_2f_3$ where
\begin{align*}
f_1(\tau)&=\frac{\eta(p\tau)}{\eta(\tau)}U_{p,k}'\left(\frac{\eta(p\tau)}{\eta(\tau)}\partial\widehat{A}_\ell(-j/p;\tau)\right),\\
f_2(\tau)&=\eta(p\tau)^{-2\sum_\delta r_\delta},\\
f_3(\tau)&=\prod_{\delta=1}^{(p-1)/2}\eta_{p,\delta}(\tau)^{r_\delta}.
\end{align*}
Let $\tbtmat{a}{b}{c}{d}\in\Gamma_1(p)$ be arbitrary. By Lemma \ref{lemm:UpkprimeTransformation} we have
\begin{equation*}
(c\tau+d)^{-2}\cdot f_1\left(\frac{a\tau+b}{c\tau+d}\right)=e^{-\frac{\pi i\ell_1 abv^2}{p}}\cdot(-1)^{\ell_1 v((a-1)/p+b)}f_1(\tau),
\end{equation*}
where $l_1$ and $v$ are any integers with $\ell_1>0$, $12\ell_1v^2\equiv-24k+1\pmod{p}$ and $v\not\equiv0\pmod{p}$.
By the definition of $\chi_\eta$ we have
\begin{equation*}
(c\tau+d)^{\sum_\delta r_\delta}\cdot f_2\left(\frac{a\tau+b}{c\tau+d}\right)=\chi_\eta^{-2\sum_\delta r_\delta}\MAT{a}{pb}{c/p}{d}f_2(\tau).
\end{equation*}
According to Lemma \ref{eq:charGEta}, we have
\begin{equation*}
f_3\left(\frac{a\tau+b}{c\tau+d}\right)=\chi_\eta^{2\sum_\delta r_\delta}\MAT{a}{pb}{c/p}{d}\prod_{\delta=1}^{(p-1)/2}\left(e^{\frac{\pi iabr_\delta\delta^2}{p}}(-1)^{((a-1)/p+b)r_\delta\delta}\right)f_3(\tau).
\end{equation*}
Taking the product of these three identities we find that \eqref{eq:UpkGEtaModular} is equivalent to
\begin{equation*}
e^{-\frac{\pi i\ell_1 abv^2}{p}}\cdot(-1)^{\ell_1 v((a-1)/p+b)}\prod_{\delta=1}^{(p-1)/2}\left(e^{\frac{\pi iabr_\delta\delta^2}{p}}(-1)^{((a-1)/p+b)r_\delta\delta}\right)=1,
\end{equation*}
that is,
\begin{equation}
\label{eq:toProveEquiv}
\frac{ab}{p}\left(\sum_{\delta=1}^{(p-1)/2}r_\delta\delta^2-\ell_1 v^2\right)+\left(\frac{a-1}{p}+b\right)\left(\sum_{\delta=1}^{(p-1)/2}r_\delta\delta+\ell_1 v\right)\in2\Z
\end{equation}
for any $\tbtmat{a}{b}{c}{d}\in\Gamma_1(p)$. Now assume \eqref{eq:rvrelation} holds; then $\sum_\delta r_\delta\delta^2\equiv \ell_1 v^2 \pmod{p}$. If $\sum_\delta r_\delta\delta\equiv \ell_1 v \pmod{2}$, then $\sum_{\delta}r_\delta\delta^2\equiv \ell_1 v^2 \pmod{2p}$ from which \eqref{eq:toProveEquiv} follows. Otherwise, if $\sum_\delta r_\delta\delta\equiv \ell_1 v+1 \pmod{2}$, then $\sum_{\delta}r_\delta\delta^2\equiv \ell_1 v^2+p \pmod{2p}$ in which case \eqref{eq:toProveEquiv} is equivalent to
\begin{equation}
\label{eq:toProveEquiv2}
ab+\frac{a-1}{p}+b\in2\Z,\quad\tbtmat{a}{b}{c}{d}\in\Gamma_1(p).
\end{equation}
If $\frac{a-1}{p}$ is odd, then $a$ is even and hence $b$ is odd in which case \eqref{eq:toProveEquiv2} holds. If $\frac{a-1}{p}$ is even, then $a$ is odd in which case \eqref{eq:toProveEquiv2} holds as well whenever $b$ is odd or even. We have proved that \eqref{eq:rvrelation} implies \eqref{eq:UpkGEtaModular}. To prove the converse, just set $\tbtmat{a}{b}{c}{d}=\tbtmat{1}{1}{0}{1}$ in \eqref{eq:toProveEquiv}.
\end{proof}

\section{Generating functions}
\label{sec:Generating functions}

\subsection{Mock modular part of $NT$}

We consider the case $p\geq 5$ being prime and rewrite \eqref{maoid} as follows.

\begin{lemma}
\label{mao24p}
For prime $p\geq 5$ and $0<s<\frac{p}{2}$. We have
\begin{align}
\label{maoidp}
\sum_{n=0}^\infty\left(NT(s,p,n)-NT(p-s,p,n)\right)q^n=\mathcal{F}_{p,s}(\tau)+\sum_{r=1}^{p-1}\frac{p-2r}{2p}D(r-s,p),
\end{align}
where
$$
\mathcal{F}_{p,s}(\tau)=\frac{\delta_{s,1}}{2}+\sum_{j=1}^{p-1}\frac{\zeta_{p}^{j(s-1/2)}(1-\zeta_{p}^j)}{2p\pi i (q)_\infty}\cdot \frac{\partial}{\partial u}\bigg|_{u=0} A_3\left(u-\frac{j}{p};\tau\right).
$$
\end{lemma}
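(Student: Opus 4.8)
The plan is to specialize Mao's identity \eqref{maoid} to $k=p$ and split its right-hand side into three pieces: the Appell--Lerch piece built from $\frac{\partial}{\partial u}\big|_{u=0}A_3(u-j/p;\tau)$, the constant piece arising from the $\tfrac12$ inside the braces, and the rank piece built from $\mathcal{R}(\zeta_p^j;q)$. The first piece requires no work: since $\zeta_p^{j(s-1)}\cdot\zeta_p^{j/2}=\zeta_p^{j(s-1/2)}$, it is literally the sum $\sum_{j=1}^{p-1}\frac{\zeta_p^{j(s-1/2)}(1-\zeta_p^j)}{2p\pi i(q)_\infty}\frac{\partial}{\partial u}\big|_{u=0}A_3(u-j/p;\tau)$ occurring in $\mathcal{F}_{p,s}(\tau)$. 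Thus the lemma reduces to two roots-of-unity evaluations: that the constant piece equals $\tfrac{\delta_{s,1}}{2}$, and that the rank piece equals $\sum_{r=1}^{p-1}\frac{p-2r}{2p}D(r-s,p)$.

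For the constant piece I would write it as $\frac{1}{2p}\sum_{j=1}^{p-1}\zeta_p^{j(s-1)}(1-\zeta_p^j)=\frac{1}{2p}\big(\sum_{j=1}^{p-1}\zeta_p^{j(s-1)}-\sum_{j=1}^{p-1}\zeta_p^{js}\big)$ and invoke the orthogonality relation $\sum_{j=1}^{p-1}\zeta_p^{ja}=p\,\delta_{a/p}-1$. Because $0<s<p/2$ forces $s\not\equiv0\pmod p$, while $s-1\equiv0\pmod p$ holds exactly when $s=1$, the two inner sums are $p\,\delta_{s,1}-1$ and $-1$, so the piece collapses to $\frac{1}{2p}\cdot p\,\delta_{s,1}=\tfrac{\delta_{s,1}}{2}$.

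The rank piece carries the real content. I would expand $\mathcal{R}(\zeta_p^j;q)=\sum_{n\ge0}\sum_{a=0}^{p-1}N(a,p,n)\zeta_p^{ja}q^n$ by grouping ranks into residue classes, interchange the (finite) $j$-summation with the sums over $n$ and $a$, and thereby reduce everything to the single coefficient $c_a:=-\frac{1}{2p}\sum_{j=1}^{p-1}\frac{\zeta_p^{j(s+a)}(1+\zeta_p^j)}{1-\zeta_p^j}$. Splitting $\frac{1+\zeta_p^j}{1-\zeta_p^j}=\frac{1}{1-\zeta_p^j}+\frac{\zeta_p^j}{1-\zeta_p^j}$ rewrites $c_a$ as $-\frac{1}{2p}\big(g(s+a)+g(s+a+1)\big)$, where $g(b):=\sum_{j=1}^{p-1}\frac{\zeta_p^{jb}}{1-\zeta_p^j}$. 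The key tool is the closed form of $g$: from the telescoping relation $g(b)-g(b+1)=\sum_{j=1}^{p-1}\zeta_p^{jb}$ together with the classical anchor $g(0)=\sum_{j=1}^{p-1}(1-\zeta_p^j)^{-1}=\frac{p-1}{2}$ (obtained by pairing $j\leftrightarrow p-j$), one derives $g(b)=\bar b-\frac{p+1}{2}$ for the representative $\bar b\in\{1,\dots,p-1\}$ of $b\bmod p$, and $g(b)=\frac{p-1}{2}$ when $p\mid b$. Feeding this back shows $c_{p-s}=0$ (the class $a\equiv-s$, where $s+a\equiv0$), while for $a\not\equiv-s$ one obtains $c_a=\frac{p-2r(a)}{2p}$, with $r(a)\in\{1,\dots,p-1\}$ the representative of $a+s\bmod p$; here the boundary subcases $\bar b\le p-2$ and $\bar b=p-1$ must be checked separately, but both yield the same formula.

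The conclusion follows by reindexing: as $a$ runs over $\{0,\dots,p-1\}\setminus\{p-s\}$, the map $a\mapsto r(a)\equiv a+s$ is a bijection onto $\{1,\dots,p-1\}$ with inverse $r\mapsto r-s$, so $\sum_{a}N(a,p,n)c_a=\sum_{r=1}^{p-1}\frac{p-2r}{2p}N(r-s,p,n)$; multiplying by $q^n$ and summing over $n$ gives $\sum_{r=1}^{p-1}\frac{p-2r}{2p}\sum_n N(r-s,p,n)q^n$, which equals $\sum_{r=1}^{p-1}\frac{p-2r}{2p}D(r-s,p)$ because $\sum_{r=1}^{p-1}(p-2r)=0$ annihilates the $p(n)/p$ contribution in each $D$. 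The main obstacle is precisely this rank piece, and within it the evaluation of $g(b)$ together with the residue bookkeeping that pins down $c_a$; in particular, recognizing that the vanishing $c_{p-s}=0$ is exactly what forces the index $r$ in the target to range over $1,\dots,p-1$ rather than over a full set of residues modulo $p$.
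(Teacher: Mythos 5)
Your proposal is correct and follows essentially the same route as the paper: both specialize \eqref{maoid} to $k=p$, evaluate the constant piece by roots-of-unity orthogonality (the paper's \eqref{a1}), and handle the rank piece by introducing $g_k=\sum_{j=1}^{p-1}\zeta_p^{jk}/(1-\zeta_p^j)$, computing it via the same pairing anchor $g_0=\frac{p-1}{2}$ and telescoping relation $g_k-g_{k+1}=-1$, and then reindexing $r\equiv a+s\pmod p$ so that the vanishing coefficient at $r\equiv 0$ restricts the sum to $1\le r\le p-1$. Your final observation that $\sum_{r=1}^{p-1}(p-2r)=0$ lets one replace $\sum_n N(r-s,p,n)q^n$ by $D(r-s,p)$ is exactly the remark the paper makes after defining $D_{NT}(s,p)$.
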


\begin{proof}
One can check that
\beq
\label{a1}
\sum_{j=1}^{p-1}\zeta_{p}^{j(s-1)}(1-\zeta_{p}^j)=p\delta_{s,1},
\eeq
since $0<s<p/2$.
Let
$$
g_k=\sum_{j=1}^{p-1}\frac{\zeta_{p}^{jk}}{1-\zeta_{p}^j}.
$$
If $k\equiv 0 \pmod{p}$ then
$$
g_k=\sum_{j=1}^{p-1}\frac{1}{1-\zeta_{p}^j}=\sum_{j=1}^{(p-1)/2}\left(\frac{1}{1-\zeta_{p}^j}+\frac{1}{1-\zeta_{p}^{-j}}\right)=\frac{p-1}{2}.
$$
For $0<k<p$ we have
$$
g_k-g_{k+1}=\sum_{j=1}^{p-1}\left(\frac{\zeta_{p}^{jk}}{1-\zeta_{p}^j}-\frac{\zeta_{p}^{jk+j}}{1-\zeta_{p}^j}\right)
=\sum_{j=1}^{p-1}\zeta_{p}^{jk}=-1.
$$
Hence $g_k=k-(p+1)/2$ if $0<k\leq p$ and $g_0=g_p=(p-1)/2$. By
$$
\mathcal{R}\left(\zeta_{p}^j;q\right)=\sum_{n=0}^\infty\sum_{m=0}^{p-1}\zeta_{p}^{mj}N(m,p,n)q^n,
$$
and noting that $N(r,p,n)=N(r+p,p,n)$, we have
\begin{align}
\label{a2}
-\sum_{j=1}^{p-1}\frac{\zeta_{p}^{js}(1+\zeta_{p}^j)\mathcal{R}\left(\zeta_{p}^j;q\right)}{(1-\zeta_{p}^j)}=
&-\sum_{j=1}^{p-1}\frac{\zeta_{p}^{js}(1+\zeta_{p}^j)}{1-\zeta_{p}^j}\sum_{n=0}^\infty\sum_{r=0}^{p-1}\zeta_{p}^{rj}N(r,p,n)q^n\\
\nonumber
=&-\sum_{n=0}^\infty\sum_{r=0}^{p-1}(g_{r}+g_{r+1})N(r-s,p,n)q^n\\
\nonumber
=&\sum_{r=1}^{p-1}(p-2r)\sum_{n=0}^\infty N(r-s,p,n)q^n\\
\nonumber
=&\sum_{r=1}^{p-1}(p-2r)D(r-s,p).
\end{align}
Substituting \eqref{a1} and \eqref{a2} to \eqref{maoid} we have \eqref{maoidp}.
\end{proof}

The part $\mathcal{F}_{p,s}(\tau)$ is a weight $3/2$ mock modular form. The completed form is given by
\begin{equation}
\label{eq:defHatF}
\widehat{\mathcal{F}}_{p,s}(\tau):=\sum_{j=1}^{p-1}\frac{\zeta_{p}^{j(s-1/2)}(1-\zeta_{p}^j)}{2p\pi i (q)_\infty}\cdot \frac{\partial}{\partial u}\bigg|_{u=0} \widehat{A}_3\left(u-\frac{j}{p};\tau\right).
\end{equation}
Following \cite[Lemma 2.9]{Mao-24}, define
\begin{align*}
T_{m,k}(q):=\sum_{n=-\infty}^\infty &\left[\sgn\left(\frac{1}{2}+kn+m\right)-E\left(\left(kn+m+\frac{1}{6}\right)\sqrt{\frac{6y}{k}}\right)\right]\\
&\times (-1)^n(6kn+6m+1)q^{\frac{-3kn^2+(6m+1)n}{2}},
\end{align*}
and
\begin{align*}
S_{m,k}(q):=\sum_{n=-\infty}^\infty &\left[\sgn\left(\frac{1}{2}-kn-m\right)+E\left(\left(kn+m+\frac{1}{6}\right)\sqrt{\frac{6y}{k}}\right)\right]\\
&\times (-1)^n(6kn+6m+1)q^{\frac{-3kn^2+(6m+1)n}{2}},
\end{align*}
with $y=\mathrm{Im(\tau)}$. We mention that $T_{m,k}$ and $S_{m,k}$ can not be expanded into a normal q-series so that we can not use \eqref{up} directly. But similarly, by a straightforward calculation, we have
\begin{align}
\label{upt}
U_{p,k}(q^nT_{m,p}(q^p))=&q^{n/p}T_{m,p}(q),\\
\label{ups}
U_{p,k}(q^nS_{m,p}(q^p))=&q^{n/p}S_{m,p}(q),
\end{align}
if $n\equiv k \pmod{p}$ and
\beq
\label{upts}
U_{p,k}(q^nT_{m,p}(q^p))=U_{p,k}(q^nS_{m,p}(q^p))=0,
\eeq
if $n\not\equiv k \pmod{p}$.

\begin{lemma}
\label{lmuf}
For prime $p\geq 5$, the completed form of $U_{p,k}\left(\mathcal{F}_{p,s}(\tau)\right)$ is given by the following.

(1)If $6k+s^2-s\equiv 0 \pmod{p}$ then
$$
U_{p,k}\left(\widehat{\mathcal{F}}_{p,s}(\tau)\right)=U_{p,k}\left(\mathcal{F}_{p,s}(\tau)\right)+\frac{1}{4}R_{p,s-1}-\frac{\delta_{s,1}}{2},
$$

(2)If $6k+s^2+s\equiv 0 \pmod{p}$ then
$$
U_{p,k}\left(\widehat{\mathcal{F}}_{p,s}(\tau)\right)=U_{p,k}\left(\mathcal{F}_{p,s}(\tau)\right)-\frac{1}{4}R_{p,s},
$$

(3)If $6k+s^2\pm s\not\equiv 0 \pmod{p}$ then
$$
U_{p,k}\left(\widehat{\mathcal{F}}_{p,s}(\tau)\right)=U_{p,k}\left(\mathcal{F}_{p,s}(\tau)\right),
$$
where
$$
R_{p,n}:=(-1)^{m_1}q^{-\frac{3m_1^2+m_1}{2p}}S_{m_1,p}(q)-(-1)^{m_2}q^{-\frac{3m_2^2+m_2}{2p}}T_{m_2,p}(q),
$$
with $n\equiv 3m_1 \pmod{p}$ and $n\equiv -3m_2-1 \pmod{p}$.
\end{lemma}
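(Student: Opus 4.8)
The plan is to isolate the only genuinely new ingredient, namely the action of $U_{p,k}$ on the nonholomorphic correction term. Writing $\widehat{A}_3=A_3+\widetilde{A}_3$ as in \eqref{a3m}, the definitions of $\mathcal{F}_{p,s}$ (Lemma~\ref{mao24p}) and $\widehat{\mathcal{F}}_{p,s}$ in \eqref{eq:defHatF} give $\widehat{\mathcal{F}}_{p,s}-\mathcal{F}_{p,s}=-\frac{\delta_{s,1}}{2}+\Phi_{p,s}$, where
\[
\Phi_{p,s}(\tau):=\sum_{j=1}^{p-1}\frac{\zeta_{p}^{j(s-1/2)}(1-\zeta_{p}^j)}{2p\pi i (q)_\infty}\cdot\frac{\partial}{\partial u}\bigg|_{u=0}\widetilde{A}_3\left(u-\frac{j}{p};\tau\right).
\]
Since $U_{p,k}$ is linear and $U_{p,k}(c)=c$ when $p\mid k$ and $0$ otherwise, the statement reduces to computing $U_{p,k}(\Phi_{p,s})$: the constant $-\frac{\delta_{s,1}}{2}$ in case~(1) is precisely $U_{p,k}(-\frac{\delta_{s,1}}{2})$, because for $s=1$ the congruence $6k+s^2-s\equiv0$ forces $6k\equiv0$, hence $k\equiv0\pmod p$, which happens exactly in case~(1); in cases~(2) and~(3) with $s=1$ the constant is killed.

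First I would expand the inner derivative. Applying the product rule to \eqref{a3m} and the chain rule for $\frac{\partial}{\partial u}$ (which produces a factor $3$ in the argument of $R$) yields, for $m\in\{1,2\}$, a combination of $R(-3j/p-m\tau;3\tau)$ and $\partial R(-3j/p-m\tau;3\tau)$ weighted by $\theta(m\tau;3\tau)$. The crucial simplification is that, by the Jacobi triple product \eqref{eq:triProd}, the quotient $\theta(m\tau;3\tau)/(q)_\infty$ collapses to $-i$ times a power of $q$: the three infinite products coming from $\theta$ at level $3\tau$ telescope because $\{3n,3n-1,3n-2\}_{n\geq1}$ enumerates all positive integers. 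This cancels the factor $1/(q)_\infty$ in $\Phi_{p,s}$ and explains why the final answer is a pure theta-type object carrying no eta quotient.

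Next I would substitute the level-$3\tau$ expansions \eqref{eq:Rtau} and \eqref{eq:pRtau} (with $\lambda=-m/3$, $\mu=-3j/p$). The factor $-2\pi i\nu$ distinguishing $\partial R$ from $R$ in \eqref{eq:pRtau}, combined with the $2\pi i m$ coming from differentiating $e^{2\pi i m u}$, assembles into the linear factor $6pn+6m+1$, and the Gaussian exponents match the weight $q^{(-3pn^2+(6m+1)n)/2}$; the overall constants $\frac{i}{2}$, $\frac{1}{2p\pi i}$ and the $2\pi$ from the derivatives combine to the rational coefficient $\pm\frac14$. Summing over $j$ is then carried out exactly as in the proof of Lemma~\ref{mao24p}: each monomial carries a factor $\zeta_p^{j(\cdots)}$, and summing against $\zeta_p^{j(s-1/2)}(1-\zeta_p^j)$ collapses the $j$-sum through the elementary evaluations behind \eqref{a1}, turning the $j$-dependence into congruence conditions that shift the indices by $s-1$ and $s$. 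This identifies $\Phi_{p,s}$ with a finite combination of the blocks $q^{n}S_{m,p}(q^p)$ and $q^{n'}T_{m',p}(q^p)$, already organized into the shapes of $R_{p,s-1}$ and $R_{p,s}$. This whole paragraph is where I would follow \cite[Lemma 2.9]{Mao-24} most closely.

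Finally I would apply $U_{p,k}$ term by term using the selection rules \eqref{upt}, \eqref{ups} and \eqref{upts}: it annihilates $q^nS_{m,p}(q^p)$ and $q^nT_{m,p}(q^p)$ unless $n\equiv k\pmod p$, in which case it returns $q^{n/p}$ times the same block. Translating the surviving exponent congruence into a relation between $k$ and $s$ produces exactly $6k+s^2-s\equiv0$ (yielding $\frac14 R_{p,s-1}$) or $6k+s^2+s\equiv0$ (yielding $-\frac14 R_{p,s}$), all other blocks being annihilated so that case~(3) gives $0$. I expect the main obstacle to lie in the second and third paragraphs: propagating the parameters $\lambda,\mu$ through the expansions at level $3\tau$, combining them with the theta factor, and pinning down the exact normalizations — the prefactors $q^{-\frac{3m^2+m}{2p}}$, the sign and the factor $\frac14$, and the index identifications $n\equiv3m_1$, $n\equiv-3m_2-1\pmod p$ — so that the collapsed single sums coincide on the nose with the definitions of $T_{m,p}$ and $S_{m,p}$. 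Once the expansion is secured, the $U_{p,k}$ step is essentially bookkeeping with the congruences.
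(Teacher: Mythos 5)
Your proposal is correct and follows essentially the same route as the paper's own proof: split off the constant $-\tfrac{\delta_{s,1}}{2}$, expand the nonholomorphic correction $\frac{\partial}{\partial u}\big|_{u=0}\bigl(\widehat{A}_3-A_3\bigr)\left(u-\tfrac{j}{p};\tau\right)$ into $S_{m,p}(q^p)$ and $T_{m,p}(q^p)$ blocks by following the proof of \cite[Lemma 2.9]{Mao-24} (where the cancellation $\theta(m\tau;3\tau)/(q)_\infty = -\rmi q^{3/8-m/2}$ you describe is exactly the mechanism), collapse the $j$-sum via $\sum_{j=1}^{p-1}\zeta_p^{jk}(1-\zeta_p^j)\in\{p,-p,0\}$, and finish with the selection rules \eqref{upt}--\eqref{upts}. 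Your explicit treatment of $U_{p,k}$ acting on the constant (nonzero only when $k=0$, which in case (1) is forced exactly when $s=1$) is the same bookkeeping the paper performs implicitly, so the two arguments coincide in all essentials.
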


\begin{remark}
One can easily verify that $R_{p,n}$ does not depend on the choice of $m_1$ and $m_2$.
\end{remark}

\begin{proof}
Following the proof of \cite[Lemma 2.9]{Mao-24}, we have
\begin{align*}
&\widehat{A}_3\left(u-\frac{j}{k};\tau \right)-A_3\left(u-\frac{j}{k};\tau \right)\\
=&\frac{\pi i (q)_\infty}{2} \sum_{m=0}^{p-1}(-1)^mq^{-\frac{3m^2+m}{2}}
\left(\zeta_p^{-(6m+1)j/2}S_{m,p}(q^p)-\zeta_p^{(6m+1)j/2}T_{m,p}(q^p)\right).
\end{align*}
Hence
\begin{align*}
&\widehat{\mathcal{F}}_{p,s}(\tau)-\mathcal{F}_{p,s}(\tau)+\frac{\delta_{s,1}}{2}\\
=&\frac{1}{4p}\sum_{j=1}^{p-1} \zeta_p^{j(s-1/2)}(1-\zeta_p^j)\sum_{m=0}^{p-1} (-1)^mq^{-\frac{3m^2+m}{2}} \left(\zeta_p^{-(6m+1)j/2}S_{m,p}(q^p)-\zeta_p^{(6m+1)j/2}T_{m,p}(q^p)\right)\\
=&\frac{1}{4p}\sum_{m=0}^{p-1}(-1)^mq^{-\frac{3m^2+m}{2}}\sum_{j=1}^{p-1}\left(\zeta_p^{(s-3m-1)j}(1-\zeta_p^j)S_{m,p}(q^p)-\zeta_p^{(s+3m)j}(1-\zeta_p^j)T_{m,p}(q^p)\right)\\
=&\frac{1}{4}\big[(-1)^{m_1}q^{-\frac{3m_1^2+m_1}{2}}S_{m_1,p}(q^p)-(-1)^{m_2}q^{-\frac{3m_2^2+m_2}{2}}T_{m_2,p}(q^p)\\
&-(-1)^{n_1}q^{-\frac{3n_1^2+n_1}{2}}S_{n_1,p}(q^p)+(-1)^{n_2}q^{-\frac{3n_2^2+n_2}{2}}T_{n_2,p}(q^p) \big],
\end{align*}
where $s-3m_1-1\equiv 0 \pmod{p}$, $s+3m_2\equiv 0 \pmod{p}$, $s-3n_1-1\equiv -1 \pmod{p}$ and $s+3n_1\equiv -1 \pmod{p}$. The last equation holds by the following.
$$
\sum_{j=1}^{p-1}\zeta_p^{jk}(1-\zeta_p^j)
$$
is equal to $p$ if $k\equiv 0 \pmod{p}$, $-p$ if $k\equiv -1 \pmod{p}$ and 0 else. For $i=1,2$ we have
$$
-\frac{3m_i^2+m_i}{2}\equiv \frac{s^2-s}{6} \pmod{p},
$$
and
$$
-\frac{3n_i^2+n_i}{2}\equiv \frac{s^2+s}{6} \pmod{p}.
$$
Hence the Lemma holds via the fact \eqref{upt}-\eqref{upts}.
\end{proof}

To find certain q-series to cancel the nonholomorphic part of $\widehat{\mathcal{F}}_{p,s}(\tau)$, we let
$$
\mathcal{L}_p(v):=\frac{(-1)^vq^{\frac{-3v^2-2s_p}{2p}}}{(q^p;q^p)_\infty}\left(\frac{\partial}{\partial u}\bigg|_{u=0} \frac{p}{2\pi i}A_3(u+v\tau;p\tau)-3vA_3(v\tau;p\tau)\right),
$$
with integer $0<v<p$, and its completed form
$$
\widehat{\mathcal{L}}_p(v):=\frac{(-1)^vq^{\frac{-3v^2-2s_p}{2p}}}{(q^p;q^p)_\infty}\left(\frac{\partial}{\partial u}\bigg|_{u=0} \frac{p}{2\pi i}\widehat{A}_3(u+v\tau;p\tau)-3v\widehat{A}_3(v\tau;p\tau)\right).
$$
For the case $v=0$, since $A_3(0;\tau)$ is not well-defined, we define
$$
\mathcal{L}_p(0):=\frac{pq^{\frac{-s_p}{p}}}{(q^p;q^p)_\infty}\left(\frac{1}{2\pi i}\frac{\partial}{\partial u}\bigg|_{u=0} \left(A_3(u;p\tau)-\frac{e^{3\pi i u}}{1-e^{2\pi i u}}\right)-\frac{1}{8}E_2(p\tau)-\frac{11}{24}\right),
$$
and
$$
\widehat{\mathcal{L}}_p(0):=\frac{pq^{\frac{-s_p}{p}}}{(q^p;q^p)_\infty}\left(\frac{1}{2\pi i}\frac{\partial}{\partial u}\bigg|_{u=0} \left(\widehat{A}_3(u;p\tau)-\frac{e^{3\pi i u}}{1-e^{2\pi i u}}\right)-\frac{1}{8}E_2(p\tau)-\frac{11}{24}\right).
$$
The function $E_2(\tau)$ is the weight 2 Eisenstein series
$$
E_2(\tau):=1-24\sum_{n=1}^\infty \frac{nq^n}{1-q^n}.
$$
By \eqref{a3m} we have
\begin{align*}
&\frac{1}{2\pi i (q^p;q^p)_\infty}\frac{\partial}{\partial u}\bigg|_{u=0} \left(\widehat{A}_3(u+v\tau;p\tau)-A_3(u+v\tau;p\tau)\right)\\
=&\frac{1}{4\pi (q^p;q^p)_\infty}\frac{\partial}{\partial u}\bigg|_{u=0} \big(e^{2\pi i (u+v\tau)}\theta(p\tau;3p\tau)R(3u+3v\tau-p\tau;3p\tau)\\
&+e^{4\pi i (u+v\tau)}\theta(2p\tau;3p\tau)R(3u+3v\tau-2p\tau;3p\tau)\big)\\
=&-\frac{1}{4}q^{-\frac{1}{2}v}
\sum_{n\in \mathbb{Z}}\left(\sgn\left(n+\frac{1}{2}\right)
-E\left(\left(pn+v+\frac{p}{6}\right)\sqrt{\frac{6y}{p}}\right)\right)(-1)^n(6n+1)q^{-\frac{3pn^2+(6v+p)n}{2}}\\
&-\frac{1}{4}q^{\frac{1}{2}v}
\sum_{n\in \mathbb{Z}}\left(\sgn\left(n+\frac{1}{2}\right)
-E\left(\left(pn+v-\frac{p}{6}\right)\sqrt{\frac{6y}{p}}\right)\right)(-1)^n(6n-1)q^{-\frac{3pn^2+(6v-p)n}{2}}.
\end{align*}
Similarly,
\begin{align*}
&\frac{1}{(q^p;q^p)_\infty}\left(\widehat{A}_3(v\tau;p\tau)-A_3(u+v\tau;p\tau)\right)\\
=&\frac{1}{2}q^{-\frac{1}{2}v}
\sum_{n\in \mathbb{Z}}\left(\sgn\left(n+\frac{1}{2}\right)
-E\left(\left(pn+v+\frac{p}{6}\right)\sqrt{\frac{6y}{p}}\right)\right)(-1)^nq^{-\frac{3pn^2+(6v+p)n}{2}}\\
&+\frac{1}{2}q^{\frac{1}{2}v}
\sum_{n\in \mathbb{Z}}\left(\sgn\left(n+\frac{1}{2}\right)
-E\left(\left(pn+v-\frac{p}{6}\right)\sqrt{\frac{6y}{p}}\right)\right)(-1)^nq^{-\frac{3pn^2+(6v-p)n}{2}}.
\end{align*}
Then we have
\begin{align*}
&\widehat{\mathcal{L}}_p(v)-\mathcal{L}_p(v)\\
=&-\frac{(-1)^v}{4}q^{\frac{-3v^2-pv-2s_p}{2p}}
\sum_{n\in \mathbb{Z}}\left(\sgn\left(n+\frac{1}{2}\right)
-E\left(\left(pn+v+\frac{p}{6}\right)\sqrt{\frac{6y}{p}}\right)\right)\\
&\times(-1)^n(6np+6v+p)q^{-\frac{3pn^2+(6v+p)n}{2}}\\
&-\frac{(-1)^v}{4}q^{\frac{-3v^2+pv-2s_p}{2p}}
\sum_{n\in \mathbb{Z}}\left(\sgn\left(n+\frac{1}{2}\right)
-E\left(\left(pn+v-\frac{p}{6}\right)\sqrt{\frac{6y}{p}}\right)\right)\\
&\times(-1)^n(6np+6v-p)q^{-\frac{3pn^2+(6v-p)n}{2}}.
\end{align*}
If $p\equiv 1 \pmod{6}$, then we let $m_1=\frac{p-1}{6}-v$ and $m_2=\frac{p-1}{6}+v$. We have
\beq
\label{ulth1}
\widehat{\mathcal{L}}_p(v)-\mathcal{L}_p(v)=(-1)^{\frac{p-1}{6}}\frac{1}{4}R_{p,n}+\epsilon^{*}_p(v),
\eeq
where $2n+6v+1\equiv 0 \pmod{p}$, $R_{p,n}$ was defined in Lemma \ref{lmuf} and $\epsilon^{*}_p(v)=(-1)^v\frac{p-6v}{2}q^{\frac{v(p-3v)-2s_p}{2p}}$ if $0\leq v<\frac{p-1}{6}$, $\epsilon^{*}_p(v)=(-1)^v\frac{5p-6v}{2}q^{\frac{(p-v)(3v-2p)-2s_p}{2p}}$ if $\frac{5p+1}{6}\leq v<1$ and $\epsilon^{*}_p(v)=0$ if $\frac{p+1}{6}\leq v<\frac{5p-1}{6}$. If $p\equiv -1 \pmod{6}$, then we let $m_1=-\frac{p+1}{6}-v$ and $m_2=-\frac{p+1}{6}+v$. Similarly we have
\beq
\label{ulth2}
\widehat{\mathcal{L}}_p(v)-\mathcal{L}_p(v)=(-1)^{\frac{p+1}{6}}\frac{1}{4}R_{p,n}+\epsilon^{\#}_p(v),
\eeq
where $\epsilon^{\#}_p(v)=(-1)^v\frac{p-6v}{2}q^{\frac{v(p-3v)-2s_p}{2p}}$ if $0\leq v<\frac{p+1}{6}$, $\epsilon^{\#}_p(v)=(-1)^v\frac{5p-6v}{2}q^{\frac{(p-v)(3v-2p)-2s_p}{2p}}$ if $\frac{5p-1}{6}\leq v<1$ and $\epsilon^{\#}_p(v)=0$ if $\frac{p-1}{6}\leq v<\frac{5p+1}{6}$.
By \eqref{ulth1}, \eqref{ulth2} and Lemma \ref{lmuf}, we arrived at the following theorem.

\begin{theorem}
\label{thm:UFminusL}
Let $0\leq v_n<p$ such that $2n+6v_n+1\equiv 0\pmod{p}$.

(1)If $6k+s^2-s\equiv 0 \pmod{p}$ then
$$
U_{p,k}(\widehat{\mathcal{F}}_{p,s}(\tau))-\chi_{12}(p)\widehat{\mathcal{L}}_p(v_{s-1})
=U_{p,k}(\mathcal{F}_{p,s}(\tau))-\chi_{12}(p)(\mathcal{L}_p(v_{s-1})+\epsilon_p(v_{s-1})),
$$

(2)If $6k+s^2+s\equiv 0 \pmod{p}$ then
$$
U_{p,k}(\widehat{\mathcal{F}}_{p,s}(\tau))+\chi_{12}(p)\widehat{\mathcal{L}}_p(v_s)
=U_{p,k}(\mathcal{F}_{p,s}(\tau))+\chi_{12}(p)(\mathcal{L}_p(v_s)+\epsilon_p(v_s)),
$$

(3)If $6k+s^2\pm s \not\equiv 0 \pmod{p}$ then
$$
U_{p,k}(\widehat{\mathcal{F}}_{p,s}(\tau))=U_{p,k}(\mathcal{F}_{p,s}(\tau)),
$$
where $\epsilon_p(v)=(-1)^v\frac{p-6v}{2}q^{\frac{v(p-3v)-2s_p}{2p}}$ if $0\leq v<\frac{p}{6}$, $\epsilon_p(v)=(-1)^v\frac{5p-6v}{2}q^{\frac{(p-v)(3v-2p)-2s_p}{2p}}$ if $\frac{5p}{6}<v<1$ and $\epsilon_p(v)=0$ if $\frac{p}{6}<v<\frac{5p}{6}$.
\end{theorem}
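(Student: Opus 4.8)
The plan is to obtain the three cases by feeding the relations \eqref{ulth1} and \eqref{ulth2} for $\widehat{\mathcal{L}}_p-\mathcal{L}_p$ into the three cases of Lemma \ref{lmuf}, using $\widehat{\mathcal{L}}_p$ purely as a device to cancel the Appell--Lerch remainder $R_{p,n}$ that occurs in Lemma \ref{lmuf}. First I would fix the sign normalisation. Since $p\geq5$ is prime, $p\not\equiv0\pmod 6$, and a direct check of the four residues $p\equiv1,5,7,11\pmod{12}$ against the definition of $\chi_{12}(p)$ gives
\[
(-1)^{(p-1)/6}=\chi_{12}(p)\ \ (p\equiv1\bmod 6),\qquad (-1)^{(p+1)/6}=\chi_{12}(p)\ \ (p\equiv-1\bmod 6).
\]
Consequently \eqref{ulth1} and \eqref{ulth2} can be written in the single uniform form
\[
\widehat{\mathcal{L}}_p(v)-\mathcal{L}_p(v)=\frac{\chi_{12}(p)}{4}\,R_{p,n}+\epsilon_p(v),\qquad 2n+6v+1\equiv0\pmod p,
\]
valid at every $v$ except the single boundary value addressed at the end.

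Next I would match the index of $R_{p,n}$ to that of Lemma \ref{lmuf}. In case (1) one takes $v=v_{s-1}$, which by definition satisfies $2(s-1)+6v_{s-1}+1\equiv0\pmod p$; comparing with $2n+6v_{s-1}+1\equiv0$ forces $n\equiv s-1\pmod p$, whence $R_{p,n}=R_{p,s-1}$ because $m_1,m_2$, and therefore $R_{p,n}$, depend only on $n$ modulo $p$ (cf.\ the remark following Lemma \ref{lmuf}). Likewise, in case (2) the choice $v=v_s$ forces $n\equiv s\pmod p$ and $R_{p,n}=R_{p,s}$. Now subtracting $\chi_{12}(p)\widehat{\mathcal{L}}_p(v_{s-1})$ from Lemma \ref{lmuf}(1) (respectively adding $\chi_{12}(p)\widehat{\mathcal{L}}_p(v_s)$ to Lemma \ref{lmuf}(2)), and using $\chi_{12}(p)^2=1$, the contribution $\tfrac14 R$ coming from $\widehat{\mathcal{L}}_p$ exactly cancels the $\pm\tfrac14 R$ in Lemma \ref{lmuf}. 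In case (2) nothing further survives, so the stated identity drops out immediately; case (3) is literally Lemma \ref{lmuf}(3), since there is no remainder to cancel and no $\mathcal{L}$-correction is introduced.

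The one genuinely delicate point — and the step I expect to be the main obstacle — is the extra summand $-\tfrac{\delta_{s,1}}{2}$ in Lemma \ref{lmuf}(1), which must be reconciled with the piecewise error term $\epsilon_p$. After the cancellation above, case (1) reduces to verifying $\chi_{12}(p)\bigl(\epsilon_p(v_{s-1})-\epsilon^{*}_p(v_{s-1})\bigr)=\tfrac{\delta_{s,1}}{2}$ (with $\epsilon^{\#}_p$ in place of $\epsilon^{*}_p$ when $p\equiv-1\pmod 6$). Here I would observe that $v_{s-1}$ lands on a threshold of the piecewise definition of $\epsilon_p$ exactly when $s=1$: solving $6v_0\equiv-1\pmod p$ gives $v_0=\tfrac{p-1}{6}$ if $p\equiv1\pmod 6$ and $v_0=\tfrac{5p-1}{6}$ if $p\equiv-1\pmod 6$, whereas for $s\neq1$ the value $v_{s-1}$ avoids every threshold, so $\epsilon_p(v_{s-1})=\epsilon^{*}_p(v_{s-1})$ (resp.\ $\epsilon^{\#}_p$) and $\delta_{s,1}=0$, giving the identity at once. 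At the boundary value one checks, using $s_p=\tfrac{p^2-1}{24}$, that the exponent of $q$ in $\epsilon_p(v_0)$ vanishes, so $\epsilon_p(v_0)$ is a pure constant; comparing the cutoffs $p/6,5p/6$ defining $\epsilon_p$ with the cutoffs $\tfrac{p\mp1}{6},\tfrac{5p\mp1}{6}$ defining $\epsilon^{*}_p,\epsilon^{\#}_p$ shows they disagree at precisely this one integer, where the discrepancy equals $\chi_{12}(p)\,\tfrac12$. This is exactly the constant needed to absorb $-\tfrac{\delta_{s,1}}{2}$, completing case (1) and hence the proof. The care lies entirely in this boundary bookkeeping, i.e.\ in confirming that the two piecewise descriptions of the error term differ only at $v_0$ and by the correct constant.
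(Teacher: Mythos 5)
Your proposal is correct and takes essentially the same route as the paper, which obtains the theorem precisely by assembling Lemma \ref{lmuf} with \eqref{ulth1}--\eqref{ulth2}: your sign identity $(-1)^{(p\mp1)/6}=\chi_{12}(p)$, the matching $R_{p,n}=R_{p,s-1}$ (resp.\ $R_{p,s}$) via the congruence defining $v_m$, and the cancellation of the $\tfrac14 R$ terms using $\chi_{12}(p)^2=1$ are exactly the steps the paper leaves implicit. Your boundary bookkeeping at $v_0$ (the constant discrepancy $\chi_{12}(p)\cdot\tfrac12$, coming from the vanishing $q$-exponent at $v_0=\tfrac{p-1}{6}$ or $\tfrac{5p-1}{6}$, absorbing $-\tfrac{\delta_{s,1}}{2}$ when $s=1$) is if anything more explicit than the paper's, and the values you assign to $\epsilon^{*}_p$, $\epsilon^{\#}_p$ at that single integer are the correct readings of the paper's piecewise definitions.
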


\subsection{The main theorems}

\begin{lemma}
\label{lemma:L0Fmodular}
Let
$$
F(\tau)=\frac{1}{2\pi i}\frac{\partial}{\partial u}\bigg|_{u=0} \left(\widehat{A}_3(u;\tau)-\frac{e^{3\pi i u}}{1-e^{2\pi i u}}\right)-\frac{1}{8}E_2(\tau)-\frac{11}{24}.
$$
Then for $\abcdMAT \in \mathrm{SL}_2(\mathbb{Z})$, we have
\beq
\label{lmF}
F\left(\frac{a\tau+b}{c\tau+d}\right)=(c\tau+d)^2F(\tau).
\eeq
\end{lemma}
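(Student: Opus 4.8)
The plan is to track a single quasimodular quantity through Zwegers's transformation law \eqref{Alt} for $\widehat{A}_3$. First I would isolate the singularity of $\widehat{A}_3(u;\tau)$ at $u=0$. The subtracted function $\frac{e^{3\pi iu}}{1-e^{2\pi iu}}$ is exactly the $n=0$ term of the series defining $A_3(u;\tau)$, while the nonholomorphic correction $\widehat{A}_3-A_3$ in \eqref{a3m} is real-analytic at $u=0$; hence $\widehat{A}_3(u;\tau)-\frac{e^{3\pi iu}}{1-e^{2\pi iu}}$ is real-analytic near $u=0$ and its $u$-derivative there is well defined. Expanding $\frac{e^{3\pi iu}}{1-e^{2\pi iu}}=-\frac{1}{2\pi iu}-1-\frac{11}{24}(2\pi i)u+O(u^2)$, one sees that $\widehat{A}_3$ has the $\tau$-independent principal part $-\frac{1}{2\pi iu}$, so I may write $\widehat{A}_3(u;\tau)=-\frac{1}{2\pi iu}+h(u;\tau)$ with $h$ real-analytic at $u=0$. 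Setting $\alpha(\tau):=\left.\frac{\partial}{\partial u}\right|_{u=0}h(u;\tau)$ for the linear coefficient of the regular part, a direct bookkeeping of the constant and linear terms shows that the two $\frac{11}{24}$'s cancel and that $F=\frac{1}{2\pi i}\alpha-\frac{1}{8}E_2$.

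The core step is to compute the transformation of $\alpha$. Replacing $u$ by $(c\tau+d)v$ in \eqref{Alt} (with $\ell=3$) gives the identity $\widehat{A}_3(v;\gamma\tau)=(c\tau+d)\,e^{-3\pi ic(c\tau+d)v^2}\,\widehat{A}_3((c\tau+d)v;\tau)$, where $\gamma=\abcdMAT$ and $\gamma\tau=\frac{a\tau+b}{c\tau+d}$. I would subtract $\frac{e^{3\pi iv}}{1-e^{2\pi iv}}$ from both sides and expand in $v$ about $v=0$, matching the coefficient of $v^1$ in the Wirtinger sense $\left.\frac{\partial}{\partial v}\right|_{v=0}$. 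On the left this coefficient is $\alpha(\gamma\tau)+\frac{11}{24}(2\pi i)$. On the right, the simple pole $-\frac{1}{2\pi i(c\tau+d)v}$ of $\widehat{A}_3((c\tau+d)v;\tau)$ multiplied by the quadratic factor $e^{-3\pi ic(c\tau+d)v^2}=1-3\pi ic(c\tau+d)v^2+O(v^4)$ produces the anomalous linear term $\frac{3}{2}c(c\tau+d)v$; the regular part $h$ contributes $(c\tau+d)^2\alpha(\tau)v$ by the Wirtinger chain rule (licit because $v\mapsto(c\tau+d)v$ is holomorphic in $v$, so the nonholomorphicity of $h$ causes no trouble); and the subtracted singular part contributes the same $\frac{11}{24}(2\pi i)$ as on the left. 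Equating and cancelling yields the clean law $\alpha(\gamma\tau)=(c\tau+d)^2\alpha(\tau)+\frac{3}{2}c(c\tau+d)$.

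Finally I would combine this with the classical quasimodularity $E_2(\gamma\tau)=(c\tau+d)^2E_2(\tau)-\frac{6i}{\pi}c(c\tau+d)$. Then
\[
F(\gamma\tau)=\frac{\alpha(\gamma\tau)}{2\pi i}-\frac18 E_2(\gamma\tau)=(c\tau+d)^2F(\tau)+\left(\frac{3c(c\tau+d)}{4\pi i}+\frac{3i}{4\pi}c(c\tau+d)\right),
\]
and since $\frac{1}{4\pi i}=-\frac{i}{4\pi}$ the two anomalous terms cancel, giving \eqref{lmF}.

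The main obstacle is the careful handling of the pole at $u=0$: one cannot simply differentiate \eqref{Alt} and set $u=0$, because $\widehat{A}_3$ is singular there. The entire content of the lemma is that the anomaly produced by the interaction of this pole with the Gaussian automorphy factor $e^{-3\pi ic u^2/(c\tau+d)}$ is precisely the weight-$2$ defect of $-\frac18E_2$, so that the two cancel; verifying that the $\tau$-independence of the residue together with the exact values $-1$ and $-\frac{11}{24}(2\pi i)$ in the expansion of the subtracted term make all spurious constants disappear is where the bookkeeping must be done with care.
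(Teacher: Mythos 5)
Your proof is correct and follows essentially the same route as the paper's: both expand Zwegers's transformation law \eqref{Alt} for $\widehat{A}_3$ around the pole at $u=0$ (after removing the $n=0$ term $\frac{e^{3\pi iu}}{1-e^{2\pi iu}}$), extract the anomalous linear term $\frac{3}{2}c(c\tau+d)$ produced by the interaction of the pole with the Gaussian automorphy factor, and cancel it against the weight-$2$ defect of $-\frac{1}{8}E_2$. The only difference is presentational: you invoke the classical quasimodular law $E_2\left(\frac{a\tau+b}{c\tau+d}\right)=(c\tau+d)^2E_2(\tau)-\frac{6i}{\pi}c(c\tau+d)$ directly, whereas the paper routes the identical cancellation through the nonholomorphic completion $\widehat{E}_2(\tau)=E_2(\tau)-\frac{3}{\pi y}$ together with the auxiliary function $h(\tau)=\frac{3}{8\pi y}+\frac{11}{24}$, which is an equivalent statement.
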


\begin{proof}
Let
$$
f(u,\tau)=\widehat{A}_3(u;\tau)-\frac{e^{3\pi i u}}{1-e^{2\pi i u}}.
$$
By \eqref{Alt} we have
\begin{align}
\label{fut}
f\left(\frac{u}{c\tau+d},\frac{a\tau+b}{c\tau+d}\right)=&\widehat{A}_3\left(\frac{u}{c\tau+d};\frac{a\tau+b}{c\tau+d}\right)-\frac{e^{\frac{3\pi i u}{c\tau+d}}}{1-e^{\frac{2\pi i u}{c\tau+d}}}\\
\nonumber
=&(c\tau+d)e^{\frac{-3\pi i c u^2}{c\tau+d}}\widehat{A}_3(u;\tau)-\frac{e^{\frac{3\pi i u}{c\tau+d}}}{1-e^{\frac{2\pi i u}{c\tau+d}}}\\
\nonumber
=&(c\tau+d)e^{\frac{-3\pi i c u^2}{c\tau+d}}f(u,\tau)+(c\tau+d)e^{\frac{-3\pi i c u^2}{c\tau+d}}\frac{e^{3\pi i u}}{1-e^{2\pi i u}}-\frac{e^{\frac{3\pi i u}{c\tau+d}}}{1-e^{\frac{2\pi i u}{c\tau+d}}}.
\end{align}
Let
$$
g(u,\tau)=(c\tau+d)e^{\frac{-3\pi i c u^2}{c\tau+d}}\frac{e^{3\pi i u}}{1-e^{2\pi i u}}-\frac{e^{\frac{3\pi i u}{c\tau+d}}}{1-e^{\frac{2\pi i u}{c\tau+d}}}.
$$
We calculate that
$$
\frac{1}{2\pi i}\frac{\partial}{\partial u}\bigg|_{u\rightarrow 0}g(u,\tau)=-\frac{3ic}{4\pi}+\frac{11}{24}\left((c\tau+d)^{-1}-(c\tau+d)\right).
$$
Applying $\frac{c\tau+d}{2\pi i}\frac{\partial}{\partial u}\bigg|_{u\rightarrow 0}$ on both sides of \eqref{fut}, we have
\begin{align}
\label{ft}
&\frac{1}{2\pi i}\frac{\partial}{\partial u}\bigg|_{u=0}f\left(u,\frac{a\tau+b}{c\tau+d}\right)\\
\nonumber
=&(c\tau+d)^2\frac{1}{2\pi i}\frac{\partial}{\partial u}\bigg|_{u=0}
f(u,\tau)-\frac{3ic}{4\pi}(c\tau+d)+\frac{11}{24}\left(1-(c\tau+d)^2\right).
\end{align}
Let
$$
h(\tau)=\frac{3}{8\pi y}+\frac{11}{24},
$$
with $\tau=x+yi$. Then
\beq
h\left(\frac{a\tau+b}{c\tau+d}\right)=(c\tau+d)^2h(\tau)-\frac{3ic}{4\pi}(c\tau+d)+\frac{11}{24}\left(1-(c\tau+d)^2\right).
\eeq
Let
$$
\widehat{E}_2(\tau)=-\frac{3}{\pi y}+E_{2}(\tau).
$$
It is well-known that
\beq
\label{e2t}
\widehat{E}_2\left(\frac{a\tau+b}{c\tau+d}\right)=(c\tau+d)^2\widehat{E}_2(\tau).
\eeq
Combining \eqref{ft}-\eqref{e2t} we have \eqref{lmF}.
\end{proof}

Now we give the proof of the main theorem. Let $D(a,M,k)$ be the $M$-dissection of $D(a,M)$, namely
$$
D(a,M,k):=\sum_{n=0}^\infty \left(N(a,M,Mn+k)-\frac{p(Mn+k)}{M}\right)q^n,
$$
and recall the definition of $L_p(v)$ \eqref{lpp} and \eqref{lp0}
\begin{theorem}
\label{mainN}
Let $p\geq 5$ be a prime and $0<s<\frac{p}{2}$, $0\leq k < p$. For each integer $m$ let $v_m$ be the integer such that $0\leq v_m<p$ and $2m+6v_m+1\equiv 0\pmod{p}$. Let $c(k,v)=\frac{3v(p-v)}{2p}-\frac{k+s_p}{p}$.

(1)If $6k+s^2-s\equiv 0 \pmod{p}$ then
\begin{align*}
&\sum_{n\geq n_0}\left(NT(s,p,pn+k)-NT(p-s,p,pn+k)\right)q^n\\
=&\sum_{r=1}^{p-1}\frac{p-2r}{2p}D(r-s,p,k)+\chi_{12}(p)q^{c(k,v_{s-1})}L_p(v_{s-1})+N_p(s,k).
\end{align*}

(2)If $6k+s^2+s\equiv 0 \pmod{p}$ then
\begin{align*}
&\sum_{n\geq n_0}\left(NT(s,p,pn+k)-NT(p-s,p,pn+k)\right)q^n\\
=&\sum_{r=1}^{p-1}\frac{p-2r}{2p}D(r-s,p,k)-\chi_{12}(p)q^{c(k,v_{s})}L_p(v_{s})+N_p(s,k).
\end{align*}

(3)If $6k+s^2\pm s \not\equiv 0 \pmod{p}$ then
\begin{align*}
&\sum_{n\geq n_0}\left(NT(s,p,pn+k)-NT(p-s,p,pn+k)\right)q^n\\
=&\sum_{r=1}^{p-1}\frac{p-2r}{2p}D(r-s,p,k)+N_p(s,k).
\end{align*}

The part $N_p(s,k)$ has the property that
\begin{equation}
\label{eq:Nmodular}
\frac{(q,q^{p-1};q^p)_\infty^2}{q(q^p;q^p)_\infty^3}G_p^{k+s_p+1}N_p(s,k)
\end{equation}
is a modular function on $\Gamma_1(p)$ (with trivial multiplier system), where
$$
G_p=\frac{(q^{(p-1)/2},q^{(p+1)/2};q^p)_\infty}{(q^{(p-3)/2},q^{(p+3)/2};q^p)_\infty}.
$$
\end{theorem}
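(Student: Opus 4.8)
The plan is to realize $N_p(s,k)$ as a holomorphic, weight-$3/2$ modular form on $\Gamma_1(p)$ and to check that the eta-quotient in \eqref{eq:Nmodular}, which has weight $-3/2$, carries the inverse multiplier system, so that the product is a holomorphic weight-$0$ form on $\Gamma_1(p)$ with trivial multiplier, i.e.\ a modular function. First I would identify $N_p(s,k)$ exactly. Inserting Lemma~\ref{mao24p} and $U_{p,k}(D(r-s,p))=q^{k/p}D(r-s,p,k)$ into the $p$-dissection and using $q^{-k/p}(\mathcal{L}_p(v)+\epsilon_p(v))=q^{c(k,v)}L_p(v)$ (which follows by matching \eqref{lpp}--\eqref{lp0} against the definitions of $\mathcal{L}_p$ and $\epsilon_p$), Theorem~\ref{thm:UFminusL} gives
\[
N_p(s,k)=q^{-k/p}\bigl(U_{p,k}(\widehat{\mathcal{F}}_{p,s})\mp\chi_{12}(p)\,\widehat{\mathcal{L}}_p(v)\bigr),
\]
with the upper sign and $v=v_{s-1}$ in case~(1), the lower sign and $v=v_s$ in case~(2), and no $\widehat{\mathcal{L}}_p$ term in case~(3). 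By Theorem~\ref{thm:UFminusL} the nonholomorphic parts of the two completed summands cancel, so $N_p(s,k)$ is holomorphic on $\uhp$; and since each completed summand transforms like a weight-$3/2$ form on $\Gamma_1(p)$, so does $N_p(s,k)$. As $\eta_{p,\delta}$ has weight $0$ and $\eta(p\tau)$ weight $1/2$, the eta-quotient is holomorphic and nonvanishing on $\uhp$ of weight $-3/2$, so only the multiplier needs attention.

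For the $U_{p,k}(\widehat{\mathcal{F}}_{p,s})$ contribution I would apply Theorem~\ref{thm:AppellGEtaModular} termwise in $j$ with $\ell=3$ and $r_1=2$, $r_{(p-3)/2}=-(k+s_p+1)$, $r_{(p-1)/2}=k+s_p+1$, all other $r_\delta=0$. Then $\sum_\delta r_\delta=2$, the weight is $2-\sum_\delta r_\delta=0$, the eta-power is $(q^p;q^p)_\infty^{1-2\sum r_\delta}=(q^p;q^p)_\infty^{-3}$, and $\prod_\delta(q^\delta,q^{p-\delta};q^p)_\infty^{r_\delta}=(q,q^{p-1};q^p)_\infty^2\,G_p^{k+s_p+1}$, so the eta-quotient of Theorem~\ref{thm:AppellGEtaModular} is precisely the one in \eqref{eq:Nmodular}. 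The hypothesis \eqref{eq:rvrelation} holds because, using $\bigl(\tfrac{p-1}{2}\bigr)^2-\bigl(\tfrac{p-3}{2}\bigr)^2=p-2$ and $24s_p=p^2-1\equiv-1\pmod p$,
\[
12\sum_\delta r_\delta\delta^2\equiv 24-24(k+s_p+1)\equiv -24k+1\pmod p,
\]
while a direct computation gives $n_0=-1-\tfrac{k}{p}$, matching the prefactor $q^{-k/p}q^{-1}$. Thus the $\widehat{\mathcal{F}}$-contribution, multiplied by the eta-quotient, is a real-analytic weight-$0$ form on $\Gamma_1(p)$ with trivial multiplier.

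For the $\widehat{\mathcal{L}}_p(v)$ contribution (cases (1) and (2)) I would write $\widehat{\mathcal{L}}_p(v)=\tfrac{(-1)^vp}{2\pi i}q^{-s_p/p}(q^p;q^p)_\infty^{-1}g_{3,v/p}(p\tau)$ with $g_{3,v/p}$ as in \eqref{eq:defglx}, and compute its multiplier from Proposition~\ref{prop:glxModular} together with \eqref{eq:etaChar} and Lemma~\ref{eq:charGEta}. The defining congruence $2(s-1)+6v+1\equiv0$ (resp.\ $2s+6v+1\equiv0$) combined with $6k+s^2-s\equiv0$ (resp.\ $6k+s^2+s\equiv0$) yields $36v^2\equiv-24k+1\pmod p$, which is the level-$3$ case of the matching condition $12\ell_1v^2\equiv-24k+1$ of Lemma~\ref{lemm:UpkprimeTransformation}; hence $\eta(p\tau)^{-1}g_{3,v/p}(p\tau)$ carries the same character on $\Gamma_1(p)$ as the $\widehat{\mathcal{F}}$-piece, and multiplication by the same eta-quotient again gives a real-analytic weight-$0$ form with trivial multiplier. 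Subtracting the two contributions, the difference equals the eta-quotient times $N_p(s,k)$, which is holomorphic on $\uhp$ by Theorem~\ref{thm:UFminusL}; being weight $0$ and $\Gamma_1(p)$-invariant, it is a modular function with trivial multiplier, as claimed.

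The main obstacle is the $\widehat{\mathcal{L}}_p$ contribution: unlike the Appell part there is no ready-made statement, so one must run the character computation for $g_{3,v/p}(p\tau)$ through Proposition~\ref{prop:glxModular} against the generalized Dedekind eta characters of Lemma~\ref{eq:charGEta}, and verify that the product with the eta-quotient is genuinely trivial on all of $\Gamma_1(p)$---in effect repeating the bookkeeping in the proof of Theorem~\ref{thm:AppellGEtaModular} with $g_{3,v/p}(p\tau)$ replacing $U_{p,k}(\partial\widehat{A}_3(-j/p)/(q)_\infty)$. A secondary nuisance is confirming that the fractional $q$-powers ($q^{-k/p}$, $q^{-s_p/p}$, $q^{-1}$ and $n_0$) assemble into integral powers so that the final function is single-valued; the identity $n_0=-1-k/p$ above is the prototype of these routine checks.
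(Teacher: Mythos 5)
Your decomposition and modularity argument track the paper's own proof closely: the same identification $N_p(s,k)=q^{-k/p}\bigl(U_{p,k}\widehat{\mathcal{F}}_{p,s}\mp\chi_{12}(p)\widehat{\mathcal{L}}_p(v)\bigr)$ via Theorem \ref{thm:UFminusL}, the same exponents $r_1=2$, $r_{(p-1)/2}=k+s_p+1$, $r_{(p-3)/2}=-(k+s_p+1)$ in Theorem \ref{thm:AppellGEtaModular}, and the same character matching through Lemma \ref{lemm:UpkprimeTransformation}. Incidentally, the ``main obstacle'' you flag at the end is not an obstacle at all: you do not need to repeat the bookkeeping of Theorem \ref{thm:AppellGEtaModular} for $g_{3,v/p}(p\tau)$, because once Lemma \ref{lemm:UpkprimeTransformation} shows that $\eta(p\tau)^{-1}g_{3,v/p}(p\tau)$ and $q^{-1/(24p)}U_{p,k}\bigl(\partial\widehat{A}_3(-j/p)/(q)_\infty\bigr)$ carry the same character, triviality of the $g$-piece multiplied by the eta-quotient follows from triviality for the $\widehat{\mathcal{F}}$-piece; this is exactly how the paper argues.

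There is, however, a genuine gap at your final step. The paper's definition of ``modular function on $\Gamma_1(p)$'' (stated immediately before its proof of Theorem \ref{mainN}) requires, besides invariance and holomorphy on $\uhp$, that $\ord_{a/c}f>-\infty$ at every cusp; your conclusion ``being weight $0$ and $\Gamma_1(p)$-invariant, it is a modular function'' skips this, and it does not come for free --- an invariant holomorphic function on $\uhp$ can have an essential singularity at a cusp. The paper establishes finite order at all cusps using Lemma \ref{lemma:ordUpkprime} together with Corollary \ref{coro:orderPartialAlhat}, \eqref{eq:ordGEta}, Proposition \ref{prop:productHolo} and Corollary \ref{coro:Hpart} (and, in cases (1)--(2), Corollary \ref{cor:glxOrder} as well); this is also what later feeds the valence-formula computations in Section \ref{subsec:Behaviour at cusps}, so it cannot be waved away. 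Two smaller unhandled points: (i) your exponent assignment collides when $p=5$, since then $(p-3)/2=1$ and you have simultaneously set $r_1=2$ and $r_{(p-3)/2}=-(k+s_p+1)$; the paper treats $p=5$ separately with $r_1=r_{(p-3)/2}=2-(k+s_p+1)$. (ii) In cases (1)--(2) your formula $\widehat{\mathcal{L}}_p(v)=\frac{(-1)^vp}{2\pi i}q^{-s_p/p}(q^p;q^p)_\infty^{-1}g_{3,v/p}(p\tau)$ is valid only for $v\not\equiv0\pmod p$, since $g_{\ell,x}$ in \eqref{eq:defglx} requires $x$ nonintegral; when $v_{s-1}=0$ or $v_s=0$ one must instead work with the $E_2$-corrected $\widehat{\mathcal{L}}_p(0)$, using Lemma \ref{lemma:L0Fmodular} and the last assertion of Lemma \ref{lemm:UpkprimeTransformation}, as the paper does.
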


Before giving the proof, we recall the definition of modular functions: we say that $f$ is a modular function on $\Gamma_1(p)$ (with trivial multiplier system) if $f$ is meromorphic on $\uhp$ and $f\left(\frac{a\tau+b}{c\tau+d}\right)=f(\tau)$ for any $\tbtmat{a}{b}{c}{d}\in\Gamma_1(p)$ and $\ord_{a/c}f>-\infty$ for any coprime integers $a$ and $c$.

\begin{proof}
First we consider the case $6k+s^2\pm s \not\equiv 0 \pmod{p}$. Applying the operator $q^{-\frac{k}{p}}U_{p,k}$ to both sides of \eqref{maoidp} we find that $N_p(s,k)=q^{-k/p}U_{p,k}\mathcal{F}_{p,s}$. Therefore, by Theorem \ref{thm:UFminusL}(3), $N_p(s,k)=q^{-k/p}U_{p,k}\widehat{\mathcal{F}}_{p,s}$ which is holomorphic on $\uhp$. Taking into account of \eqref{eq:defHatF}, we have
\begin{align*}
N_p(s,k)=\frac{q^{-k/p}}{2p\pi i}\sum_{j=1}^{p-1}\zeta_{p}^{j(s-1/2)}(1-\zeta_{p}^j)\cdot U_{p,k}\left(\frac{\partial\widehat{A}_3\left(-j/p\right)}{(q)_\infty}\right).
\end{align*}
Thus, to prove the modularity of \eqref{eq:Nmodular} it suffices to prove the modularity of
\begin{equation}
\label{eq:UpkFpsModular}
\frac{(q,q^{p-1};q^p)_\infty^2}{q(q^p;q^p)_\infty^3}\cdot\frac{(q^{(p-1)/2},q^{(p+1)/2};q^p)_\infty^{k+s_p+1}}{(q^{(p-3)/2},q^{(p+3)/2};q^p)_\infty^{k+s_p+1}}q^{-\frac{k}{p}}U_{p,k}\left(\frac{\partial\widehat{A}_3\left(-j/p\right)}{(q;q)_\infty}\right)
\end{equation}
for each $0<j<p$. When $p\geq7$, the above function is the function $f$ in Theorem \ref{thm:AppellGEtaModular} with $\ell=3$,
\begin{equation*}
r_1=2,\quad r_{(p-1)/2}=k+\frac{p^2-1}{24}+1,\quad r_{(p-3)/2}=-\left(k+\frac{p^2-1}{24}+1\right)
\end{equation*}
and $r_j=0$ for other $j$. Since in this case
\begin{align*}
12\cdot\sum_{\delta=1}^{(p-1)/2}r_\delta\cdot\delta^2&=24 +12\cdot\left(k+\frac{p^2-1}{24}+1\right)\cdot\left(\left(\frac{p-1}{2}\right)^2-\left(\frac{p-3}{2}\right)^2\right)\\
&\equiv-24k+1\pmod{p},
\end{align*}
\eqref{eq:UpkFpsModular} and hence \eqref{eq:Nmodular} satisfy the modular transformation equations on $\Gamma_1(p)$ with trivial multiplier system by Theorem \ref{thm:AppellGEtaModular}. It follows from this, the fact $N_p(s,k)$ is holomorphic on $\uhp$, Lemma \ref{lemma:ordUpkprime} and \eqref{eq:ordGEta} that \eqref{eq:Nmodular} is a modular function on $\Gamma_1(p)$. The case $p=5$ is proved similarly with the only change
$$
r_1=r_{(p-3)/2}=2-\left(k+\frac{p^2-1}{24}+1\right).
$$

Next we consider the case $6k+s^2-s\equiv 0 \pmod{p}$. Applying the operator $q^{-\frac{k}{p}}U_{p,k}$ to both sides of \eqref{maoidp} and using Theorem \ref{thm:UFminusL}(1) we find that
\begin{align*}
N_p(s,k)&=q^{-k/p}U_{p,k}\mathcal{F}_{p,s}-\chi_{12}(p)q^{c(k,v_{s-1})}L_p(v_{s-1})\\
&=q^{-k/p}\cdot\left(U_{p,k}\widehat{\mathcal{F}}_{p,s}-\chi_{12}(p)\widehat{\mathcal{L}}_p(v_{s-1})\right),
\end{align*}
which is holomorphic on $\uhp$. Note that for $v_{s-1}\not\equiv0\pmod{p}$,
$$
\widehat{\mathcal{L}}_p(v_{s-1})=\frac{(-1)^{v_{s-1}}p}{2\pi i}\cdot q^{\frac{1}{24p}}\eta(p\tau)^{-1}g_{3,v_{s-1}/p}(p\tau)
$$
where $g_{3,v_{s-1}/p}$ is defined in \eqref{eq:defglx}. It follows that
\begin{multline*}
N_p(s,k)=\frac{q^{-k/p}}{2p\pi i}\sum_{j=1}^{p-1}\zeta_{p}^{j(s-1/2)}(1-\zeta_{p}^j)\cdot U_{p,k}\left(\frac{\partial\widehat{A}_3\left(-j/p\right)}{(q;q)_\infty}\right)\\
-\chi_{12}(p)\frac{(-1)^{v_{s-1}}p}{2\pi i}\cdot q^{\frac{1}{24p}-\frac{k}{p}}\eta(p\tau)^{-1}g_{3,v_{s-1}/p}(p\tau),
\end{multline*}
Since $6v_{s-1}+2s-1\equiv0\pmod{p}$ and $6k+s^2-s\equiv 0 \pmod{p}$, we have $36v_{s-1}^2\equiv-24k+1\pmod{p}$. Hence, according to Lemma \ref{lemm:UpkprimeTransformation}, $q^{-\frac{1}{24p}}U_{p,k}\left(\frac{\partial\widehat{A}_3\left(-j/p\right)}{(q;q)_\infty}\right)$ and $\eta(p\tau)^{-1}g_{3,v_{s-1}/p}(p\tau)$ satisfy the same modular transformation equations on $\Gamma_1(p)$ (i.e., they have the same multiplier system). Therefore, proving the modularity of \eqref{eq:Nmodular}, as in the last case, is equivalent to proving the modularity of \eqref{eq:UpkFpsModular} which can be done in the same manner. Note that when proving the holomorphicity at cusps, we need Corollaries \ref{coro:Hpart} and \ref{cor:glxOrder} in addition. This concludes the proof of the case $v_{s-1}\not\equiv0\pmod{p}$. Otherwise, let us consider the case $v_{s-1}=0$ in which situation $0=36v_{s-1}^2\equiv-24k+1\pmod{p}$. By the last assertion of Lemma \ref{lemm:UpkprimeTransformation} and Lemma \ref{lemma:L0Fmodular} proving the modularity of \eqref{eq:Nmodular} is still equivalent to proving the modularity of \eqref{eq:UpkFpsModular} which can be done as above.

Finally, for the case $6k+s^2+s\equiv 0 \pmod{p}$, the proof is almost the same as the last case $6k+s^2-s\equiv 0 \pmod{p}$. We omit the redundant details.
\end{proof}

Let
$$
D_C(a,M):=\sum_{n=0}^\infty\left(M(a,M,n)-\frac{p(n)}{M}\right)q^n,
$$
and
$$
D_C(a,M,k):=\sum_{n=0}^\infty \left(M(a,M,Mn+k)-\frac{p(Mn+k)}{M}\right)q^n.
$$
From \eqref{maoidm} we deduce the following theorem which is the $M_\omega$-analogue of Theorem~\ref{mainN}.
\begin{theorem}
\label{mainM}
Under the same condition and notation of Theorem \ref{mainN},
\begin{align*}
&\sum_{n\geq n_0}\left(M_\omega(s,p,pn+k)-M_\omega(p-s,p,pn+k)\right)q^n\\
=&\sum_{r=1}^{p-1}\frac{p-2r}{2p}D_C(r-s,p,k)+M_p(s,k).
\end{align*}
The part $M_p(s,k)$ has the property that
\begin{equation}
\label{eq:MpskModular}
\frac{(q,q^{p-1};q^p)_\infty^2}{q(q^p;q^p)_\infty^3}G_p^{k+s_p+1}M_p(s,k)
\end{equation}
is a modular function on $\Gamma_1(p)$.
\end{theorem}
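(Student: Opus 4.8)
The plan is to follow the proof of Theorem \ref{mainN} line by line, replacing the level $\ell=3$ data by the level $\ell=1$ data coming from \eqref{maoidm}, and to exploit the decisive simplification that $\widehat{A}_1=A_1$ (as recorded after \eqref{eq:defAlhat}), so that \emph{no} nonholomorphic completion is needed and hence no analogue of the $L_p$-machinery (Theorem \ref{thm:UFminusL}) ever enters. First I would rewrite \eqref{maoidm} with modulus $p$ in the style of Lemma \ref{mao24p}. Setting
$$
\mathcal{G}_{p,s}(\tau):=\sum_{j=1}^{p-1}\frac{\zeta_{p}^{j(s-1/2)}(1-\zeta_{p}^j)}{2p\pi i(q)_\infty}\cdot\frac{\partial}{\partial u}\bigg|_{u=0}A_1\left(u-\frac{j}{p};\tau\right),
$$
the crank-function sum $-\sum_{j}\frac{\zeta_p^{js}(1+\zeta_p^j)\mathcal{C}(\zeta_p^j;q)}{2p(1-\zeta_p^j)}$ is converted, by the very computation of the constants $g_k=k-(p+1)/2$ carried out in \eqref{a1}--\eqref{a2} (now applied to $\mathcal{C}(\zeta_p^j;q)=\sum_n\sum_{r=0}^{p-1}\zeta_p^{rj}M(r,p,n)q^n$ and the periodicity $M(r,p,n)=M(r+p,p,n)$ in place of $\mathcal{R}$ and $N$), into $\sum_{r=1}^{p-1}\frac{p-2r}{2p}D_C(r-s,p)$. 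Note that, in contrast to Lemma \ref{mao24p}, no $\frac{\delta_{s,1}}{2}$ term appears, since \eqref{maoidm} carries no constant $\frac12$ inside the braces. This yields the identity
$$
\sum_{n=0}^\infty(M_\omega(s,p,n)-M_\omega(p-s,p,n))q^n=\mathcal{G}_{p,s}(\tau)+\sum_{r=1}^{p-1}\frac{p-2r}{2p}D_C(r-s,p).
$$

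Because $\widehat{A}_1=A_1$, the function $\mathcal{G}_{p,s}$ equals its own modular completion and is already holomorphic on $\uhp$: the pole locus $-j/p\in\Z\tau\oplus\Z$ of $A_1(-j/p;\tau)$ is empty for $\tau\in\uhp$ since $0<j<p$, and the crank function has no poles as $\zeta_p^j\neq1$. Consequently the entire apparatus of Lemma \ref{lmuf} and Theorem \ref{thm:UFminusL}, whose sole purpose was to cancel the nonholomorphic part of $\widehat{A}_3$, degenerates to the trivial case in this level-$1$ situation, which is exactly why Theorem \ref{mainM} carries neither a case distinction nor $L_p$-terms. I would then apply $q^{-k/p}U_{p,k}$ to both sides; the operator acts on the crank-dissection term through \eqref{up} to produce $\sum_{r}\frac{p-2r}{2p}D_C(r-s,p,k)$, and I define $M_p(s,k):=q^{-k/p}U_{p,k}\mathcal{G}_{p,s}$, holomorphic on $\uhp$. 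Unfolding $\mathcal{G}_{p,s}$ gives
$$
M_p(s,k)=\frac{q^{-k/p}}{2p\pi i}\sum_{j=1}^{p-1}\zeta_{p}^{j(s-1/2)}(1-\zeta_{p}^j)\cdot U_{p,k}\left(\frac{\partial\widehat{A}_1(-j/p)}{(q)_\infty}\right).
$$

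For the modularity of \eqref{eq:MpskModular} I would invoke Theorem \ref{thm:AppellGEtaModular} with $\ell=1$ and precisely the same exponents $r_\delta$ used in the proof of Theorem \ref{mainN} (namely $r_1=2$, $r_{(p-1)/2}=k+s_p+1$, $r_{(p-3)/2}=-(k+s_p+1)$ for $p\geq7$, with the coincidence $r_1=r_{(p-3)/2}$ absorbed into $r_1=2-(k+s_p+1)$ for $p=5$). The point that legitimizes this transcription is that the hypothesis \eqref{eq:rvrelation}, $12\sum_\delta r_\delta\delta^2\equiv-24k+1\pmod p$, is independent of $\ell$, and that Lemma \ref{lemm:UpkprimeTransformation} and Theorem \ref{thm:AppellGEtaModular} are proved for arbitrary level; thus the congruence verification already done in Theorem \ref{mainN} applies verbatim, the weight $2-\sum_\delta r_\delta$ is again $0$, and each summand transforms with trivial multiplier system on $\Gamma_1(p)$. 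Finiteness of the order at every cusp then follows from the holomorphicity of $M_p(s,k)$ on $\uhp$ together with Lemma \ref{lemma:ordUpkprime} and \eqref{eq:ordGEta}, exactly as in case (3) of Theorem \ref{mainN}.

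I expect the only genuinely new computation to be the crank analogue of \eqref{a2}, i.e.\ confirming that $-\sum_{j}\frac{\zeta_p^{js}(1+\zeta_p^j)\mathcal{C}(\zeta_p^j;q)}{1-\zeta_p^j}$ collapses to $\sum_{r}(p-2r)D_C(r-s,p)$; this is routine once the values $g_k$ are reused. There is no obstacle of substance precisely because the level-$1$ completion is holomorphic, which is what eliminates both the nonholomorphic cancellation step and the $L_p$-terms that complicate Theorem \ref{mainN}; the remainder is a faithful repetition of the level-$3$ argument with $\ell=3$ replaced by $\ell=1$, so I would simply refer to that proof for the omitted details.
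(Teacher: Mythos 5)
Your proposal is correct and follows essentially the same route as the paper: the paper's own proof of Theorem \ref{mainM} consists precisely of the remark that one repeats the proof of Theorem \ref{mainN} with $A_3,\widehat{A}_3$ replaced by $A_1,\widehat{A}_1$ (via \eqref{maoidm}), that $A_1=\widehat{A}_1$ renders every nonholomorphic correction---and hence the $L_p$/case-distinction machinery---unnecessary, and that the modularity of \eqref{eq:MpskModular} follows from Theorem \ref{thm:AppellGEtaModular} with $\ell=1$. Your write-up merely supplies the details the paper leaves implicit (the crank analogue of Lemma \ref{mao24p}, the definition $M_p(s,k)=q^{-k/p}U_{p,k}\mathcal{G}_{p,s}$, and the observation that condition \eqref{eq:rvrelation} is independent of $\ell$ so the same exponents $r_\delta$ work).
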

The proof proceeds as that of Theorem \ref{mainN}, with the roles $A_3$ and $\widehat{A}_3$ replaced by $A_1$ and $\widehat{A}_1$ (according to \eqref{maoidm}). Since $A_1$ is holomorphic, that is, $A_1=\widehat{A}_1$, no nonholomorphic correction term is needed. Thus, to prove the modularity of \eqref{eq:MpskModular} it suffices to prove the modularity of \eqref{eq:UpkFpsModular} with $\widehat{A}_3$ replaced by $\widehat{A}_1$ which follows from Theorem \ref{thm:AppellGEtaModular} (with $\ell=1$) as well.

\section{Examples and representations of $N_p(s,k)$ and $M_p(s,k)$}
\label{sec:Examples and the algorithm}

\subsection{Examples}

In this section, we show how Theorems~\ref{mainN} and~\ref{mainM} work on proving identities such as \eqref{ex1}-\eqref{ex3}. The part $D(s,p,k)$ was obtained by Hickerson and Mortenson \cite{HiMo-17}. Denote
$$
J_k:=(q^k;q^k)_\infty,
$$
$$
J_{k,a}:=(q^a,q^{k-a},q^k;q^k)_\infty,
$$
and
$$
g(x,q):=x^{-1}\left(-1+\sum_{n=0}^\infty \frac{q^{n^2}}{(x;q)_{n+1}(q/x;q)_n}\right)
$$
be a universal mock theta function. They state that for $M=5$ \cite[Eq(12)-Eq(14)]{HiMo-17}
\begin{align}
\label{D50}
D(0,5)=&-2q^5g(q^5,q^{25})+\frac{4}{5}\cdot\frac{J_5J_{25}^3}{J_{25,5}^3}\\
\nonumber
&+\frac{4q}{5}\cdot \frac{J_{25}^2}{J_{25,5}}-\frac{2q^2}{5}\cdot \frac{J_{25}^2}{J_{25,10}}+\frac{2q^3}{5}\cdot \frac{J_5J_{25}^3}{J_{25,10}^3},\\
\label{D51}
D(1,5)=&D(4,5)=q^5g(q^5,q^{25})-q^8g(q^{10},q^{25})-\frac{1}{5}\cdot\frac{J_5J_{25}^3}{J_{25,5}^3}\\
\nonumber
&-\frac{q}{5}\cdot \frac{J_{25}^2}{J_{25,5}}+\frac{3q^2}{5}\cdot \frac{J_{25}^2}{J_{25,10}}-\frac{3q^3}{5}\cdot \frac{J_5J_{25}^3}{J_{25,10}^3},\\
\label{D52}
D(2,5)=&D(3,5)=q^8g(q^{10},q^{25})-\frac{1}{5}\cdot\frac{J_5J_{25}^3}{J_{25,5}^3}\\
\nonumber
&-\frac{q}{5}\cdot \frac{J_{25}^2}{J_{25,5}}-\frac{2q^2}{5}\cdot \frac{J_{25}^2}{J_{25,10}}+\frac{2q^3}{5}\cdot \frac{J_5J_{25}^3}{J_{25,10}^3},
\end{align}
and for $M=7$ \cite[Eq(34)-Eq(37)]{HiMo-17}
\begin{align}
\label{D70}
D(0,7)=&2+2q^7g(q^7,q^{49})\\
\nonumber
&-\frac{8}{7}\cdot\frac{J_{49,21}^2}{J_7}+\frac{6q}{7}\cdot \frac{J_{49}^2}{J_{49,7}}-\frac{2q^2}{7}\cdot \frac{J_{49,14}^2}{J_7}\\
\nonumber
&+\frac{4q^3}{7}\cdot \frac{J_{49}^2}{J_{49,14}}+\frac{2q^4}{7}\cdot \frac{J_{49}^2}{J_{49,21}}
-\frac{4q^6}{7}\cdot \frac{J_{49,7}^2}{J_7},\\
\label{D71}
D(1,7)=&D(6,7)=-1-q^7g(q^7,q^{49})+q^{16}g(q^{21},q^{49})\\
\nonumber
&+\frac{6}{7}\cdot\frac{J_{49,21}^2}{J_7}-\frac{q}{7}\cdot \frac{J_{49}^2}{J_{49,7}}+\frac{5q^2}{7}\cdot \frac{J_{49,14}^2}{J_7}\\
\nonumber
&-\frac{3q^3}{7}\cdot \frac{J_{49}^2}{J_{49,14}}+\frac{2q^4}{7}\cdot \frac{J_{49}^2}{J_{49,21}}
+\frac{3q^6}{7}\cdot \frac{J_{49,7}^2}{J_7},\\
\label{D72}
D(2,7)=&D(5,7)=q^{13}g(q^{14},q^{49})-q^{16}g(q^{21},q^{49})\\
\nonumber
&-\frac{1}{7}\cdot\frac{J_{49,21}^2}{J_7}-\frac{q}{7}\cdot \frac{J_{49}^2}{J_{49,7}}-\frac{2q^2}{7}\cdot \frac{J_{49,14}^2}{J_7}\\
\nonumber
&+\frac{4q^3}{7}\cdot \frac{J_{49}^2}{J_{49,14}}-\frac{5q^4}{7}\cdot \frac{J_{49}^2}{J_{49,21}}
+\frac{3q^6}{7}\cdot \frac{J_{49,7}^2}{J_7},\\
\label{D73}
D(3,7)=&D(4,7)=-q^{13}g(q^{14},q^{49})\\
\nonumber
&-\frac{1}{7}\cdot\frac{J_{49,21}^2}{J_7}-\frac{q}{7}\cdot \frac{J_{49}^2}{J_{49,7}}-\frac{2q^2}{7}\cdot \frac{J_{49,14}^2}{J_7}\\
\nonumber
&-\frac{3q^3}{7}\cdot \frac{J_{49}^2}{J_{49,14}}+\frac{2q^4}{7}\cdot \frac{J_{49}^2}{J_{49,21}}
-\frac{4q^6}{7}\cdot \frac{J_{49,7}^2}{J_7}.
\end{align}
Analogously, Mortenson \cite[Theorem 10.1]{Mo-19} calculated the generating functions of $D_C(a,M,k)$ by
\begin{align}
\label{Dc50}
D_C(0,5)=&\frac{4}{5}\cdot\frac{J_5J_{25}^3}{J_{25,5}^3}-\frac{6q}{5}\cdot \frac{J_{25}^2}{J_{25,5}}\\
\nonumber
&-\frac{2q^2}{5}\cdot \frac{J_{25}^2}{J_{25,10}}+\frac{2q^3}{5}\cdot \frac{J_5J_{25}^3}{J_{25,10}^3},\\
\label{Dc51}
D_C(1,5)=&D_C(4,5)=-\frac{1}{5}\cdot\frac{J_5J_{25}^3}{J_{25,5}^3}+\frac{4q}{5}\cdot \frac{J_{25}^2}{J_{25,5}}\\
\nonumber
&-\frac{2q^2}{5}\cdot \frac{J_{25}^2}{J_{25,10}}-\frac{3q^3}{5}\cdot \frac{J_5J_{25}^3}{J_{25,10}^3},\\
\label{Dc52}
D_C(2,5)=&D_C(3,5)=-\frac{1}{5}\cdot\frac{J_5J_{25}^3}{J_{25,5}^3}-\frac{q}{5}\cdot \frac{J_{25}^2}{J_{25,5}}\\
\nonumber
&\frac{3q^2}{5}\cdot \frac{J_{25}^2}{J_{25,10}}+\frac{2q^3}{5}\cdot \frac{J_5J_{25}^3}{J_{25,10}^3},\\
\end{align}
and \cite[Theorem 10.2]{Mo-19}
\begin{align}
\label{Dc70}
D_C(0,7)=&\frac{6}{7}\cdot\frac{J_{49,21}^2}{J_7}-\frac{8q}{7}\cdot \frac{J_{49}^2}{J_{49,7}}-\frac{2q^2}{7}\cdot \frac{J_{49,14}^2}{J_7}\\
\nonumber
&+\frac{4q^3}{7}\cdot \frac{J_{49}^2}{J_{49,14}}+\frac{2q^4}{7}\cdot \frac{J_{49}^2}{J_{49,21}}
-\frac{4q^6}{7}\cdot \frac{J_{49,7}^2}{J_7},\\
\label{Dc71}
D_C(1,7)=&D_C(6,7)=-\frac{1}{7}\cdot\frac{J_{49,21}^2}{J_7}+\frac{6q}{7}\cdot \frac{J_{49}^2}{J_{49,7}}-\frac{2q^2}{7}\cdot \frac{J_{49,14}^2}{J_7}\\
\nonumber
&-\frac{3q^3}{7}\cdot \frac{J_{49}^2}{J_{49,14}}-\frac{5q^4}{7}\cdot \frac{J_{49}^2}{J_{49,21}}
+3\frac{q^6}{7}\cdot \frac{J_{49,7}^2}{J_7},\\
\label{Dc72}
D_C(2,7)=&D_C(5,7)=-\frac{1}{7}\cdot\frac{J_{49,21}^2}{J_7}-\frac{q}{7}\cdot \frac{J_{49}^2}{J_{49,7}}+\frac{5q^2}{7}\cdot \frac{J_{49,14}^2}{J_7}\\
\nonumber
&-\frac{3q^3}{7}\cdot \frac{J_{49}^2}{J_{49,14}}+\frac{2q^4}{7}\cdot \frac{J_{49}^2}{J_{49,21}}
-\frac{4q^6}{7}\cdot \frac{J_{49,7}^2}{J_7},\\
\label{Dc73}
D_C(3,7)=&D_C(4,7)=-\frac{1}{7}\cdot\frac{J_{49,21}^2}{J_7}-\frac{q}{7}\cdot \frac{J_{49}^2}{J_{49,7}}-\frac{2q^2}{7}\cdot \frac{J_{49,14}^2}{J_7}\\
\nonumber
&\frac{4q^3}{7}\cdot \frac{J_{49}^2}{J_{49,14}}+\frac{2q^4}{7}\cdot \frac{J_{49}^2}{J_{49,21}}
+\frac{3q^6}{7}\cdot \frac{J_{49,7}^2}{J_7}.\\
\end{align}

By the representations of $N_p(s,k)$ and $M_p(s,k)$ for $p=5$ and $p=7$ in the appendices, one can obtain \eqref{ex2} from Theorem~\ref{mainN}, Theorem~\ref{mainM}, \eqref{D50}-\eqref{D52} and \eqref{Dc50}-\eqref{Dc52} immediately. Similarly we have
\begin{align*}
&\sum_{n=0}^\infty \big(NT(1,7,7n+5)-NT(6,7,7n+5)+3NT(2,7,7n+5)-3NT(5,7,7n+5\big)q^n\\
\nonumber
=&-7\frac{J_7^5J_{7,2}^{10}}{J_{7,1}^5J_{7,3}^7}(1-t)^4,
\end{align*}
where $t=q\frac{J_{7,1}^3}{J_{7,2}^2J_{7,3}}$. We use Frye and Garvan's MAPLE package \cite{gamp} to verify
$$
1-t=\frac{J_{7,1}J_{7,3}^2}{J_{7,2}^3}
$$
computationally and then arrive at \eqref{ex1}. We do not list the case $M_{11}(s,k)$ but we mention that the relations, which have been proved in \cite[Appendix]{ccy}, can also be demonstrated by our method. It follows directly that the equation
$$
\sum_{m=1}^{(p-1)/2}m\left[M_{\omega}(m,p,pn-s_p)-M_{\omega}(p-m,p,pn-s_p)\right]=0,
$$
holds for $p=5,7$, and $11$.

\subsection{Behavior of $N_p(s,k)$ and $M_p(s,k)$ at cusps}
\label{subsec:Behaviour at cusps}

In this subsection, we derive the representations of $N_p(s,k)$ and $M_p(s,k)$ presented in Appendices \ref{apx:A} and \ref{apx:B}. Similar representations of $M_{11}(s,k)$ were previously obtained in \cite{ccy}, where the authors employ complicated $q$-series techniques in their proof. Here we prove identities involving $N_p(s,k)$ or $M_p(s,k)$ using the valence formula. We need to know up to which power of $q$ we must check that the Fourier expansions of \eqref{eq:Nmodular} and of some modular function are the same.

Recall the cusps of $\Gamma_1(p)\backslash\uhp$ can be represented by the set
\begin{align*}
\mathscr{R}_p:=&\left\{\frac{f(a_0,c)}{c}\colon 1\leq c< \frac{p}{2},\quad0\leq a_0<(c,p),\quad(a_0,c,p)=1\right\}\\
\cup&\left\{\frac{f(a_0,c)}{c}\colon c=\frac{p}{2} \text{ or }c=p,\quad0\leq a_0\leq\frac{(c,p)}{2},\quad(a_0,c,p)=1\right\}
\end{align*}
where $c,\,a_0$ take values in $\Z$ and $f(a_0,c)$ is any integer such that $(f(a_0,c),c)=1$ and $f(a_0,c)\equiv a_0\pmod{p}$; c.f. \cite[Corollary 6.3.19]{CoSt-17}. Let $a_1/c_1$ and $a_2/c_2$ be two points in the projective line over $\Q$ where $a_1,c_1,a_2,c_2$ are integers with $(a_1,c_1)=(a_2,c_2)=1$. Then they represent the same cusp of $\Gamma_1(p)\backslash\uhp$, that is, there exists $\gamma\in\Gamma_1(p)$ such that $\gamma\frac{a_1}{c_1}=\frac{a_2}{c_2}$, if and only if there is an $\varepsilon\in\{\pm1\}$ such that $c_1\equiv\varepsilon c_2\pmod{p}$ and $a_1\equiv\varepsilon a_2\pmod{(p,c_1)}$. Moreover, if $p\neq4$, then the width of the cusp representative $\frac{a}{c}$ equals $\frac{p}{(c,p)}$ where $a$, $c$ are coprime integers. In particular, if $p$ is an odd prime, then
\begin{equation*}
\mathscr{R}_p=\left\{\frac{p}{c}\colon 1\leq c\leq \frac{p-1}{2}\right\}\cup\left\{\frac{a}{p}\colon 1\leq a\leq \frac{p-1}{2}\right\}.
\end{equation*}
The width of any cusp in the former set of the right-hand side above equals $p$ and that in the latter set equals $1$. Note that the cusp $i\infty$ is represented by $\frac{1}{p}$.

For a meromorphic function $f$ on $\uhp$ that satisfies the modular transformation equations of $\Gamma_1(p)$ ($p\neq4$) of an integral or half-integral weight (possibly with a multiplier system) we set $\mathrm{div}_{a/c}f=\frac{p}{(c,p)}\ord_{a/c}f$ (the values $-\infty$ and $+\infty$, which represent that $f$ has an essential singularity at $a/c$ and $f=0$ respectively, are allowed). For $\tau_0\in\uhp$, if $f(\tau)=c\cdot(\tau-\tau_0)^n(1+o(1))$ as $\tau\to\tau_0$, set $\mathrm{div}_{\tau_0}f=\frac{n}{e_{\tau_0}}$ where
$$
e_{\tau_0}:=\#\{\gamma\in\Gamma_1(p)/\{\pm I\}\colon \gamma\tau_0=\tau_0\}.
$$
It is known that when $p>3$ we have $e_{\tau_0}=1$ for any $\tau_0\in\uhp$ (c.f. \cite[Ex 2.3.7, p.57]{DiSh-05}). Clearly, $\mathrm{div}_{x}(fg)=\mathrm{div}_{x}(f)+\mathrm{div}_{x}(g)$ where $x\in\uhp\cup\mathscr{R}_p$ and $g$ satisfies the same conditions for $f$ but with possibly different weight and multiplier system, while $\mathrm{div}_{x}(f+g)\geq\min\{\mathrm{div}_{x}(f),\mathrm{div}_{x}(g)\}$ where the weight and multiplier system of $f$ and $g$ are required to be the same. (A sufficient condition for $\mathrm{div}_{x}(f+g)=\min\{\mathrm{div}_{x}(f),\mathrm{div}_{x}(g)\}$ is $\mathrm{div}_{x}(f)\neq\mathrm{div}_{x}(g)$.) A special case of the well known valence formula states that, if $f$ is nonzero, of weight $0$ and meromorphic at all cusps, then
\begin{equation}
\label{eq:valence}
\sum_{a/c\in\mathscr{R}_p}\mathrm{div}_{a/c}f+\sum_{\tau\in\Gamma_1(p)\backslash\uhp}\mathrm{div}_{\tau}f=0.
\end{equation}
For a proof of the valence formula that allows half-integral weights and any multiplier systems, see \cite[Theorem 2.1 and Eq. (13)]{ZZ23}.


More generally, if $f$ is only real analytic on $\uhp$ instead of meromorphic and has well defined holomorphic part $H_{1}$ and nonholomorphic part $H_{2}$ (c.f. Remark \ref{rema:holoPart}), then for $\tau\in\uhp$, define $\mathrm{div}_{\tau}f=\mathrm{div}_{\tau}H_{1}$. For the cusp representative $a/c\in\mathscr{R}_p$, let $\gamma\in\slZ$ satisfy $\gamma(i\infty)=a/c$. If $f\vert_r\gamma$ ($r$ is the weight) has well defined holomorphic part $H_{\gamma,1}$ and nonholomorphic part $H_{\gamma,2}$, then define $\mathrm{div}_{a/c}f=\frac{p}{(c,p)}\mathrm{ord}_{i\infty}H_{\gamma,1}$ ($p\neq 4$). Clearly, this is independent of the choice of $\gamma$.

\begin{prop}
\label{prop:divNmodular}
Let the notation be as in Theorem \ref{mainN} and let $f$ denote the function \eqref{eq:Nmodular}. Then
\begin{align}
\mathrm{div}_{\frac{p}{c}}f&> -s_p,\quad&\text{for } &1\leq c\leq \frac{p-1}{2},\label{eq:divfpc}\\
\mathrm{div}_{\frac{a}{p}}f&> -\frac{(k+s_p+3)p}{8},\quad&\text{for } &1\leq a\leq \frac{p-1}{2},\label{eq:divfap}\\
\mathrm{div}_{\tau}f&\geq0,\quad&\text{for } &\tau\in\uhp\label{eq:divftau}.
\end{align}
\end{prop}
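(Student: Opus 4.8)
The plan is to exploit the factorization of the function $f$ in \eqref{eq:Nmodular} as a product $f=E\cdot N_p(s,k)$, where
$E=\frac{(q,q^{p-1};q^p)_\infty^2}{q(q^p;q^p)_\infty^3}G_p^{k+s_p+1}$ is an explicit generalized Dedekind eta quotient, and then to use the additivity $\mathrm{div}_x f=\mathrm{div}_x E+\mathrm{div}_x N_p(s,k)$, valid for functions transforming on $\Gamma_1(p)$ even with nontrivial multiplier systems. Using \eqref{eq:gEta} one rewrites $E$ in the shape $q^{\alpha}\,\eta_{p,1}^2\,\eta(p\tau)^{-3}\bigl(\eta_{p,(p-1)/2}/\eta_{p,(p-3)/2}\bigr)^{k+s_p+1}$ for a suitable rational $\alpha$; these are the exponents that Theorem \ref{mainN} used to force a trivial multiplier. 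The third assertion \eqref{eq:divftau} is then immediate, since $E$ is holomorphic and nonvanishing on $\uhp$ (each $\eta_{p,\delta}$ and $\eta(p\tau)$ is), while $N_p(s,k)$ is holomorphic on $\uhp$ because its nonholomorphic corrections cancel under $U_{p,k}$, as recorded in the proof of Theorem \ref{mainN}; hence $\mathrm{div}_\tau f=\mathrm{div}_\tau E+\mathrm{div}_\tau N_p(s,k)=0+\ord_\tau N_p(s,k)\geq 0$.

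For the two cusp families I would compute $\mathrm{div}_{a/c}E$ exactly and bound $\mathrm{div}_{a/c}N_p(s,k)$ from below. The eta part is routine: Robins's formula \eqref{eq:ordGEta} gives $\ord_{a/c}\eta_{p,\delta}=\frac{(p,c)^2}{2p}\overline{P}_2\bigl(a\delta/(p,c)\bigr)$, and together with the standard order of $\eta(p\tau)$ this yields closed forms for $\mathrm{div}_{p/c}E$ (where $1\leq c\leq\frac{p-1}{2}$, $(c,p)=1$, width $p$) and for $\mathrm{div}_{a/p}E$ (where $1\leq a\leq\frac{p-1}{2}$, width $1$). For $N_p(s,k)$ I would use its representation from the proof of Theorem \ref{mainN} as a linear combination over $0<j<p$ of $q^{-k/p}U_{p,k}\bigl(\partial\widehat{A}_3(-j/p)/(q)_\infty\bigr)$, together with, in cases \textnormal{(1)} and \textnormal{(2)}, one term proportional to $\widehat{\mathcal{L}}_p(v)$. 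Passing from $U_{p,k}$ to $U'_{p,k}$ via \eqref{eq:UpkUpkprime} turns each Appell summand into an $\eta(\tau)^{-1}\eta(p\tau)$-multiple of $U'_{p,k}f_0$ with $f_0=\tfrac{\eta(p\tau)}{\eta(\tau)}\partial\widehat{A}_3(-j/p)$, whose order at a cusp is bounded below by Lemma \ref{lemma:ordUpkprime}; the inner quantity $\ord_{\frac{a+cm}{cp}}f_0$ is evaluated through Remark \ref{rema:ordUpkprime} and Corollary \ref{coro:orderPartialAlhat}. For the $\widehat{\mathcal{L}}_p(v)$ term, which by Theorem \ref{mainN} equals a constant times $\eta(p\tau)^{-1}g_{3,v/p}(p\tau)$ when $v\not\equiv 0$, the order is supplied directly by Corollary \ref{cor:glxOrder} (see \eqref{eq:defglx}), while the degenerate case $v=0$ is handled via the weight-$2$, fully modular function $F$ of Lemma \ref{lemma:L0Fmodular}. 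Summing the contributions and using $\mathrm{div}_x\bigl(\sum_j\cdots\bigr)\geq\min_j\mathrm{div}_x(\cdots)$ gives a lower bound for $\mathrm{div}_{a/c}f$.

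The main obstacle is the combinatorial bookkeeping that converts these lower bounds into the explicit thresholds $-s_p$ and $-\tfrac{(k+s_p+3)p}{8}$. Two nested case analyses interact: the minimization over $m\in\{0,\dots,p-1\}$ in Lemma \ref{lemma:ordUpkprime}, which selects the relevant representative $\frac{a+cm}{cp}$, and the ceiling-function dichotomy in Corollary \ref{coro:orderPartialAlhat} (with $\ell=3$ and $x=j/p$), which fixes the leading exponent of the holomorphic part of $\partial\widehat{A}_3(-j/p)$ at that representative. I expect the cusp $a/p$ (denominator divisible by $p$, width $1$) to be the delicate one, since there the eta exponents scale like $p^2$ against $\overline{P}_2$-values of size $\sim p$, putting $\mathrm{div}_{a/p}E$ and the threshold $-\tfrac{(k+s_p+3)p}{8}$ at comparable magnitude — which is exactly why \eqref{eq:divfap} is so much weaker than \eqref{eq:divfpc}. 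The final step is to verify, uniformly in the admissible ranges of $s,k,j$ and $c$ (resp. $a$), that the eta contribution plus the Appell (and $\widehat{\mathcal{L}}$) contribution strictly exceeds the stated bound. The strictness should trace back to the constant term $C$ in Theorem \ref{thm:partAlhatCusp} vanishing whenever $cx\notin\Z$ — which holds at $p/c$ since $(c,p)=1$ and $0<j<p$ — so that the leading exponent of the holomorphic part there is strictly positive.
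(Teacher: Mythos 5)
Your proposal is correct and follows essentially the same route as the paper's own proof: the paper, too, factors $f$ into the eta quotient $h=\eta_{p,1}^2\eta_{p,(p-1)/2}^{k+s_p+1}\eta_{p,(p-3)/2}^{-(k+s_p+1)}\eta(p\tau)^{-4}$ times a linear combination of the pieces $f_j=\frac{\eta(p\tau)}{\eta(\tau)}U'_{p,k}\bigl(\frac{\eta(p\tau)}{\eta(\tau)}\partial\widehat{A}_3(-j/p)\bigr)$ and $g_{3,v/p}(p\tau)$ (or $F(p\tau)$ when $v=0$), and then bounds orders cusp by cusp with exactly the tools you cite, namely \eqref{eq:ordGEta}, Lemma \ref{lemma:ordUpkprime} together with Remark \ref{rema:ordUpkprime} and Corollary \ref{coro:orderPartialAlhat}, Corollary \ref{cor:glxOrder}, Lemma \ref{lemma:L0Fmodular}, and Corollary \ref{coro:Hpart}/Proposition \ref{prop:productHolo} for sums and products of holomorphic parts. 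Your bookkeeping predictions also match the paper's computation: at $\frac{a}{p}$ the threshold $-\frac{(k+s_p+3)p}{8}$ comes entirely from the eta factor (the Appell pieces have nonnegative order there), while at $\frac{p}{c}$ the strict inequality comes from the $m=0$ term in Lemma \ref{lemma:ordUpkprime}, where $cj/p\notin\Z$ forces $C=0$ in Theorem \ref{thm:partAlhatCusp} and hence a strictly positive leading exponent.
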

\begin{proof}
Set
\begin{align*}
f_j(\tau)&=\frac{\eta(p\tau)}{\eta(\tau)}U'_{p,k}\left(\frac{\eta(p\tau)}{\eta(\tau)}\partial\widehat{A}_3\left(-\frac{j}{p}\right)\right),\quad j=1,2,\dots,p-1,\\
g_v(\tau)&=g_{3,v/p}(p\tau),\quad v=1,2,\dots,p-1,\\
g_0(\tau)&=F(p\tau),\\
h(\tau)&=\eta_{p,1}(\tau)^2\eta_{p,(p-1)/2}(\tau)^{k+s_p+1}\eta_{p,(p-3)/2}(\tau)^{-(k+s_p+1)}\eta(p\tau)^{-4}.
\end{align*}
See \eqref{eq:defglx} for the definition of $g_{3,v/p}(p\tau)$ and Lemma \ref{lemma:L0Fmodular} for $F(\tau)$. We have seen in the proof of Theorem \ref{mainN} that there is a $v\in\{0,1,\dots,p-1\}$ and a sequence $c, c_1,c_2,\dots,c_{p-1}\in\C$ ($c_j\neq0$ for $1\leq j\leq p-1$) such that $\sum_{j=1}^{p-1}c_jf_j+cg_v$ is holomorphic on $\uhp$ and
\begin{equation}
\label{eq:fDecompfjgvh}
f=\left(\sum_{j=1}^{p-1}c_jf_j+cg_v\right)\cdot h
\end{equation}
where we have used \eqref{eq:UpkUpkprime} to express $U_{p,k}$ in terms of $U'_{p,k}$. Thus, we need to estimate the orders of $f_j$, $g_v$ and $h$ at cusps. It is known that $\ord_{a/c}\eta(p\tau)=\frac{(p,c)^2}{24p}$. This fact together with \eqref{eq:ordGEta} gives
\begin{equation}
\label{eq:ordpch}
\ord_{p/c}h=0,\quad 1\leq c\leq\frac{p-1}{2}.
\end{equation}
On the other hand, for $1\leq a\leq \frac{p-1}{2}$,
\begin{multline*}
\ord_{\frac{a}{p}}h=p\left(\left\{\frac{a}{p}\right\}^2-\left\{\frac{a}{p}\right\}\right)+\frac{p}{2}(k+s_p+1)\\
\cdot\left(\left\{\frac{a(p-1)}{2p}\right\}^2-\left\{\frac{a(p-1)}{2p}\right\}-\left\{\frac{a(p-3)}{2p}\right\}^2+\left\{\frac{a(p-3)}{2p}\right\}\right)
\end{multline*}
where $\{x\}:=x-[x]$. It follows that
\begin{equation}
\label{eq:ordaph}
\ord_{\frac{a}{p}}h>-(k+s_p+3)p/8.
\end{equation}
Let us now consider $g_v$. By Corollary \ref{cor:glxOrder}, we have
\begin{equation}
\label{eq:ordpcgv}
\ord_{\frac{p}{c}}g_v=0,\quad \ord_{\frac{a}{p}}g_v\geq0
\end{equation}
where $v>0$, $1\leq a,c\leq\frac{p-1}{2}$. For $g_0$, expanding the definitions (i.e. Lemma \ref{lemma:L0Fmodular}, \eqref{a3m} and \eqref{eq:AppellLerch}) we find
$$
F(\tau)=-\frac{7}{12}+\sum_{n\geq1}c_nq^{n/8}+\text{ ``nonholomorphic part''}
$$
from which $\ord_{i\infty} F=0$. It follows that \eqref{eq:ordpcgv} still holds for $v=0$. For $f_j$, Lemma \ref{lemma:ordUpkprime} implies
\begin{equation*}
\ord_{\frac{p}{c}}U'_{p,k}\left(\frac{\eta(p\tau)}{\eta(\tau)}\partial\widehat{A}_3\left(-\frac{j}{p}\right)\right)\geq\min_{0\leq m<p}\frac{(m,p)^2}{p}\cdot\ord_{\frac{p+cm}{cp}}\frac{\eta(p\tau)}{\eta(\tau)}\partial\widehat{A}_3\left(-\frac{j}{p}\right).
\end{equation*}
If $m=0$, then
$$
\frac{(m,p)^2}{p}\cdot\ord_{\frac{p+cm}{cp}}\frac{\eta(p\tau)}{\eta(\tau)}\partial\widehat{A}_3\left(-\frac{j}{p}\right)=-\frac{p-1}{24}+p\cdot\ord_{\frac{1}{c}}\partial\widehat{A}_3\left(-\frac{j}{p}\right)>-\frac{p-1}{24}
$$
according to Remark \ref{rema:ordUpkprime} and Corollary \ref{coro:orderPartialAlhat}. Otherwise if $m>0$, then
$$
\frac{(m,p)^2}{p}\cdot\ord_{\frac{p+cm}{cp}}\frac{\eta(p\tau)}{\eta(\tau)}\partial\widehat{A}_3\left(-\frac{j}{p}\right)=\frac{1}{p}\cdot\ord_{\frac{cm}{p}}\frac{\eta(p\tau)}{\eta(\tau)}\partial\widehat{A}_3\left(-\frac{j}{p}\right)=\frac{p-1}{24p}
$$
since $p\cdot\frac{j}{p}\in\Z$ and hence $\ord_{\frac{cm}{p}}\partial\widehat{A}_3\left(-\frac{j}{p}\right)=0$. Therefore
$$
\ord_{\frac{p}{c}}U'_{p,k}\left(\frac{\eta(p\tau)}{\eta(\tau)}\partial\widehat{A}_3\left(-\frac{j}{p}\right)\right)>-\frac{p-1}{24},
$$
and hence by Proposition \ref{prop:productHolo}
\begin{equation}
\label{eq:ordpcfj}
\ord_{\frac{p}{c}}f_j>\frac{1-p}{24p}-\frac{p-1}{24}=-\frac{s_p}{p},\quad1\leq j\leq p-1,\quad1\leq c\leq\frac{p-1}{2}.
\end{equation}
Next we calculate $\ord_{\frac{a}{p}}f_j$ for $1\leq a\leq\frac{p-1}{2}$. Again using Lemma \ref{lemma:ordUpkprime} we find that
\begin{align*}
\ord_{\frac{a}{p}}U'_{p,k}\left(\frac{\eta(p\tau)}{\eta(\tau)}\partial\widehat{A}_3\left(-\frac{j}{p}\right)\right)&\geq\min_{0\leq m<p}\frac{1}{p}\cdot\ord_{\frac{a+pm}{p^2}}\frac{\eta(p\tau)}{\eta(\tau)}\partial\widehat{A}_3\left(-\frac{j}{p}\right)\\
&=\frac{1}{p}\ord_{\frac{a}{p}}\frac{\eta(p\tau)}{\eta(\tau)}\partial\widehat{A}_3\left(-\frac{j}{p}\right)=\frac{p-1}{24p}.
\end{align*}
Thus
\begin{equation}
\label{eq:ordapfj}
\ord_{\frac{a}{p}}f_j\geq\frac{p-1}{24}+\frac{p-1}{24p}=\frac{s_p}{p},\quad1\leq j\leq p-1,\quad1\leq a\leq\frac{p-1}{2}.
\end{equation}
It follows from \eqref{eq:fDecompfjgvh}, \eqref{eq:ordpch}, \eqref{eq:ordpcgv}, \eqref{eq:ordpcfj} and Corollary \ref{coro:Hpart} that $\ord_{\frac{p}{c}}f>-\frac{s_p}{p}$. Since $\mathrm{div}_{\frac{p}{c}}f=p\cdot\ord_{\frac{p}{c}}f$ the first desired formula \eqref{eq:divfpc} follows. Similarly, the second desired formula \eqref{eq:divfap} follows from \eqref{eq:fDecompfjgvh}, \eqref{eq:ordaph}, \eqref{eq:ordpcgv}, \eqref{eq:ordapfj}, Corollary \ref{coro:Hpart} and the fact $\mathrm{div}_{\frac{a}{p}}f=\ord_{\frac{a}{p}}f$. Finally, the third desired formula \eqref{eq:divftau} holds since $f$ is holomorphic on $\uhp$.
\end{proof}

The above proposition allows us to check an identity of the form $f=\overline{f}$, where $f$ is as above and $\overline{f}$ is any modular function (with the trivial multiplier system) of the group $\Gamma_1(p)$ that is holomorphic on $\uhp$, with the aid of Frye and Garvan's MAPLE package \cite{gamp}. First we shall estimate the orders of $\overline{f}$ at all cusps except $1/p$: Assume that we have the lower bounds
\begin{equation}
\label{eq:divoverlinef}
\mathrm{div}_{\frac{p}{c}}\overline{f}\geq -e_{\frac{p}{c}},\quad\mathrm{div}_{\frac{a}{p}}\overline{f}\geq -e_{\frac{a}{p}},\quad1\leq a,\,c\leq\frac{p-1}{2},\,a\neq1
\end{equation}
where $e_{\frac{p}{c}},\,e_{\frac{a}{p}}\in\Q$.
\begin{cor}
\label{cor:divNmodular}
Suppose
\begin{equation*}
f=\sum_{n_0\leq n\in\Z}a_nq^n,\quad \overline{f}=\sum_{n_0\leq n\in\Z}b_nq^n.
\end{equation*}
If $a_n=b_n$ for all $n_0\leq n \leq n_1$ where
\begin{equation*}
n_1=\sum_{c=1}^{(p-1)/2}\max\{s_p,e_{\frac{p}{c}}\}+\sum_{a=2}^{(p-1)/2}\max\left\{\frac{(k+s_p+3)p}{8},e_{\frac{a}{p}}\right\},
\end{equation*}
then $f=\overline{f}$.
\end{cor}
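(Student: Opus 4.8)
The plan is to argue by contradiction using the valence formula \eqref{eq:valence}. First I would set $g=f-\overline{f}$. Since $f$ is the modular function \eqref{eq:Nmodular} and $\overline{f}$ is by hypothesis a modular function on $\Gamma_1(p)$ with trivial multiplier system, both holomorphic on $\uhp$ and meromorphic at the cusps, their difference $g$ is again a weight-$0$ modular function on $\Gamma_1(p)$ with trivial multiplier system, holomorphic on $\uhp$, whose $q$-expansion at $i\infty$ is supported on integer exponents $\geq n_0$. Assuming $g\neq0$, the valence formula applies, and after isolating the cusp $1/p=i\infty$ (which has width $1$, so that $\mathrm{div}_{1/p}g=\ord_{i\infty}g$) it reads
\begin{equation*}
\mathrm{div}_{1/p}g+\sum_{c=1}^{(p-1)/2}\mathrm{div}_{\frac{p}{c}}g+\sum_{a=2}^{(p-1)/2}\mathrm{div}_{\frac{a}{p}}g+\sum_{\tau\in\Gamma_1(p)\backslash\uhp}\mathrm{div}_\tau g=0 .
\end{equation*}

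Next I would bound from below every term other than $\mathrm{div}_{1/p}g$. Using $\mathrm{div}_x(f-\overline{f})\geq\min\{\mathrm{div}_x f,\mathrm{div}_x\overline{f}\}$ and combining the lower bounds \eqref{eq:divfpc}, \eqref{eq:divfap} of Proposition \ref{prop:divNmodular} for $f$ with the assumed bounds \eqref{eq:divoverlinef} for $\overline{f}$ gives
\begin{equation*}
\mathrm{div}_{\frac{p}{c}}g\geq-\max\{s_p,e_{\frac{p}{c}}\},\qquad \mathrm{div}_{\frac{a}{p}}g\geq-\max\{\tfrac{(k+s_p+3)p}{8},e_{\frac{a}{p}}\}
\end{equation*}
for $1\leq c\leq(p-1)/2$ and $2\leq a\leq(p-1)/2$, while \eqref{eq:divftau} (applied to both $f$ and $\overline{f}$, which are holomorphic on $\uhp$) yields $\mathrm{div}_\tau g\geq0$ for all $\tau\in\uhp$. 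Substituting these into the valence formula and solving for the isolated term produces exactly
\begin{equation*}
\ord_{i\infty}g=\mathrm{div}_{1/p}g\leq\sum_{c=1}^{(p-1)/2}\max\{s_p,e_{\frac{p}{c}}\}+\sum_{a=2}^{(p-1)/2}\max\{\tfrac{(k+s_p+3)p}{8},e_{\frac{a}{p}}\}=n_1 .
\end{equation*}

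Finally I would derive the contradiction. The hypothesis $a_n=b_n$ for all integers $n_0\leq n\leq n_1$ says precisely that the Fourier coefficients of $g$ at $i\infty$ vanish for these $n$; since the expansion of $g$ is supported on the integers, a nonzero $g$ would force $\ord_{i\infty}g\geq\lfloor n_1\rfloor+1>n_1$, contradicting the bound just obtained. Hence $g=0$, i.e. $f=\overline{f}$. Once Proposition \ref{prop:divNmodular} is in hand the argument is essentially bookkeeping, and the only points demanding care are that $g$ genuinely inherits the trivial multiplier system (so that the weight-$0$ valence formula \eqref{eq:valence} applies verbatim) and that taking the minimum of the strict bound for $f$ against the non-strict bounds for $\overline{f}$ still yields a valid cusp-by-cusp lower bound. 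The conceptual heart of the statement—that all the admissible pole order is swept onto the single cusp $i\infty$, whose expansion we can actually compute—is what pins down $n_1$ as the correct number of coefficients to verify.
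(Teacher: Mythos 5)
Your proposal is correct and follows essentially the same argument as the paper: assume $f\neq\overline{f}$, combine Proposition \ref{prop:divNmodular} with the bounds \eqref{eq:divoverlinef} to control $\mathrm{div}_x(f-\overline{f})$ at every cusp other than $1/p$ and at interior points, and apply the valence formula \eqref{eq:valence} against the lower bound $\mathrm{div}_{1/p}(f-\overline{f})\geq[n_1]+1>n_1$ forced by the coefficient agreement. The only difference is presentational: you isolate $\mathrm{div}_{1/p}$ and read the valence formula as an upper bound on it, while the paper sums all terms and exhibits the contradiction $0>0$ directly.
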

\begin{proof}
Assume by contradiction that $f\neq\overline{f}$. According to the assumption \eqref{eq:divoverlinef} and Proposition \ref{prop:divNmodular} we have
\begin{align*}
\mathrm{div}_{\frac{p}{c}}(f-\overline{f})&\geq\min\{-s_p,-e_{\frac{p}{c}}\},\quad&\text{for } &1\leq c\leq \frac{p-1}{2},\\
\mathrm{div}_{\frac{a}{p}}(f-\overline{f})&\geq\min\left\{-\frac{(k+s_p+3)p}{8},-e_{\frac{a}{p}}\right\},\quad&\text{for } &2\leq a\leq \frac{p-1}{2},\\
\mathrm{div}_{\tau}(f-\overline{f})&\geq0,\quad&\text{for } &\tau\in\uhp.
\end{align*}
Since $a_n=b_n$ for $n\leq n_1$, $\mathrm{div}_{\frac{1}{p}}(f-\overline{f})\geq [n_1]+1$. Applying the valence formula \eqref{eq:valence} to $f-\overline{f}$ we obtain
\begin{align*}
0&=\sum_{c=1}^{(p-1)/2}\mathrm{div}_{\frac{p}{c}}(f-\overline{f})+\sum_{a=2}^{(p-1)/2}\mathrm{div}_{\frac{a}{p}}(f-\overline{f})+\mathrm{div}_{\frac{1}{p}}(f-\overline{f})+\sum_{\tau\in\Gamma_1(p)\backslash\uhp}\mathrm{div}_{\tau}(f-\overline{f})\\
&\geq-\sum_{c=1}^{(p-1)/2}\max\{s_p,e_{\frac{p}{c}}\}-\sum_{a=2}^{(p-1)/2}\max\left\{\frac{(k+s_p+3)p}{8},e_{\frac{a}{p}}\right\}+([n_1]+1)+0\\
&>0,
\end{align*}
which is a contradiction. Therefore $f=\overline{f}$.
\end{proof}

\begin{remark}
Proposition \ref{prop:divNmodular} and Corollary \ref{cor:divNmodular} hold as well for $f$ being \eqref{eq:MpskModular} instead of \eqref{eq:Nmodular}. For the proof, we proceed just as above, but with all the occurrences of $\widehat{A}_3$ replaced by $\widehat{A}_1$ and $g_v(\tau)$ redefined by $g_{1,v/p}(p\tau)$. Since our estimates of orders of $f_j$ and $g_v$ are independent of the level $\ell$, the proofs above also work for $f$ being \eqref{eq:MpskModular}. Moreover, it is possible to obtain more accurate lower bounds of $\mathrm{div}_{\frac{p}{c}}f$ and $\mathrm{div}_{\frac{a}{p}}f$ where $f$ is \eqref{eq:MpskModular} or \eqref{eq:Nmodular} using the exact formula in Corollary \ref{coro:orderPartialAlhat}, but the ones we have given suffice.
\end{remark}

In Appendices \ref{apx:A} and \ref{apx:B} we represent \eqref{eq:Nmodular} and \eqref{eq:MpskModular} for $p=5$, $7$ as polynomials of $t$ and $t^{-1}$ where $t$, depending on $p$, is a generalized Dedekind eta quotient. To prove each of these identities, we let $f$ be the left-hand side, $\overline{f}$ be the right-hand side and apply Corollary \ref{cor:divNmodular} or its $M_\omega$-version. Thus we just need to check the Fourier coefficients of $f$ and $\overline{f}$ up to the $q^{n_1}$-term. Below we give an estimate of $n_1$.

For $N_5(s,k)$ in Appendix \ref{apx:A}, we have
$$
s_p=\frac{p^2-1}{24}=1,\qquad\frac{(k+s_p+3)p}{8}\leq\frac{5\cdot(4+1+3)}{8}=5.
$$
Note that $\overline{f}$ is a linear combination of $t^{-2}$, $t^{-1}$ and $1$, where
$$
t=q\frac{J_{5,1}^5}{J_{5,2}^5}=\frac{\eta_{5,1}^5}{\eta_{5,2}^5}.
$$
By \eqref{eq:ordGEta} we have for $a=1,2$ and $c=1,2$,
\begin{align*}
\mathrm{div}_{\frac{5}{c}}(\eta_{5,1})=5\cdot\ord_{\frac{5}{c}}(\eta_{5,1})=\frac{1}{12},\quad&\mathrm{div}_{\frac{a}{5}}(\eta_{5,1})=\ord_{\frac{a}{5}}(\eta_{5,1})=\frac{a^2}{10}-\frac{a}{2}+\frac{5}{12},\\
\mathrm{div}_{\frac{5}{c}}(\eta_{5,2})=5\cdot\ord_{\frac{5}{c}}(\eta_{5,2})=\frac{1}{12},\quad&\mathrm{div}_{\frac{a}{5}}(\eta_{5,2})=\ord_{\frac{a}{5}}(\eta_{5,2})=\frac{2a^2}{5}-a+\frac{5}{12}.
\end{align*}
Hence
$$
\mathrm{div}_{\frac{5}{1}}(t)=0,\quad\mathrm{div}_{\frac{5}{2}}(t)=0,\quad\mathrm{div}_{\frac{1}{5}}(t)=1,\quad\mathrm{div}_{\frac{2}{5}}(t)=-1.
$$
It follows that in \eqref{eq:divoverlinef} we can choose $e_{\frac{5}{1}}=e_{\frac{5}{2}}=e_{\frac{2}{5}}=0$ and hence
\begin{equation*}
n_1=\max\{s_5,e_{\frac{5}{1}}\}+\max\{s_5,e_{\frac{5}{2}}\}+\max\left\{\frac{5(k+s_5+3)}{8},e_{\frac{2}{5}}\right\}\leq7.
\end{equation*}
This means if the Fourier coefficients of $f$ and $\overline{f}$ coincide up to the $q^{7}$-term, then $f=\overline{f}$ for $N_5(s,k)$.

The proof for $M_5(s,k)$ is the same as for $N_5(s,k)$: we need to check the Fourier coefficients up to the $q^{7}$-term as well.

Finally, the proofs for identities in Appendix \ref{apx:B} that deal with $N_7(s,k)$ and $M_7(s,k)$ are similar so we omit the details. We have in this case
\begin{gather*}
\mathrm{div}_{\frac{7}{1}}(t)=0,\quad\mathrm{div}_{\frac{7}{2}}(t)=0,\quad\mathrm{div}_{\frac{7}{3}}(t)=0,\\
\mathrm{div}_{\frac{1}{7}}(t)=1,\quad\mathrm{div}_{\frac{2}{7}}(t)=0,\quad\mathrm{div}_{\frac{3}{7}}(t)=-1,\\
e_{\frac{7}{1}}=e_{\frac{7}{2}}=e_{\frac{7}{3}}=e_{\frac{2}{7}}=0,\quad e_{\frac{3}{7}}=4,\\
s_p=2,\quad\frac{(k+s_p+3)p}{8}\leq\frac{77}{8},\quad n_1\leq\frac{101}{4}.
\end{gather*}
Thus to prove each identity in Appendix \ref{apx:B}, one need just to check the Fourier coefficients of both sides up to the $q^{25}$-term.

\begin{appendix}
\section{Representations of $N_5(s,k)$ and $M_5(s,k)$}
\label{apx:A}
\begin{flalign*}
&G:=\frac{J_{5,2}}{J_{5,1}} \qquad P:=\frac{J_{5,1}^2}{qJ_5^5} \qquad t:=q\frac{J_{5,1}^5}{J_{5,2}^5}.&
\end{flalign*}
~
\begin{align*}
&P\cdot G^2\cdot N_5(1,0)=-\frac{3}{10}t^{-1}-\frac{1}{10}, \qquad &&P\cdot G^2\cdot N_5(2,0)=-\frac{1}{10}t^{-1}-\frac{17}{10},\\
&P\cdot G^3\cdot N_5(1,1)=-\frac{3}{10}t^{-1}-\frac{11}{10}, \qquad &&P\cdot G^3\cdot N_5(2,1)=-\frac{1}{10}t^{-1}+\frac{13}{10},\\
&P\cdot G^4\cdot N_5(1,2)=-\frac{11}{10}t^{-1}+\frac{3}{10}, \qquad &&P\cdot G^4\cdot N_5(2,2)=-\frac{1}{5}t^{-1}-\frac{2}{5},\\
&P\cdot G^5\cdot N_5(1,3)=\frac{1}{10}t^{-1}-\frac{3}{10}, \qquad &&P\cdot G^5\cdot N_5(2,3)=-\frac{9}{5}t^{-1}+\frac{2}{5},\\
&P\cdot G^6\cdot N_5(1,4)=-t^{-1}, \qquad &&P\cdot G^6\cdot N_5(2,4)=\frac{5}{12}t^{-2}+\frac{1}{2}t^{-1}+\frac{5}{12},\\
~\\
&P\cdot G^2\cdot M_5(1,0)=-\frac{3}{10}t^{-1}-\frac{1}{10}, \qquad &&P\cdot G^2\cdot M_5(2,0)=-\frac{1}{10}t^{-1}+\frac{33}{10},\\
&P\cdot G^3\cdot M_5(1,1)=-\frac{4}{5}t^{-1}+\frac{2}{5}, \qquad &&P\cdot G^3\cdot M_5(2,1)=-\frac{1}{10}t^{-1}+\frac{13}{10},\\
&P\cdot G^4\cdot M_5(1,2)=\frac{2}{5}t^{-1}+\frac{4}{5}, \qquad &&P\cdot G^4\cdot M_5(2,2)=-\frac{17}{10}t^{-1}-\frac{9}{10},\\
&P\cdot G^5\cdot M_5(1,3)=\frac{1}{10}t^{-1}-\frac{3}{10}, \qquad &&P\cdot G^5\cdot M_5(2,3)=\frac{16}{5}t^{-1}+\frac{2}{5},\\
&P\cdot G^6\cdot M_5(1,4)=4t^{-1}, \qquad &&P\cdot G^6\cdot M_5(2,4)=-2t^{-1}.
\end{align*}

\section{Representations of $N_7(s,k)$ and $M_7(s,k)$}
\label{apx:B}
\begin{flalign*}
&G:=\frac{J_{7,3}}{J_{7,2}} \qquad P:=\frac{J_{7,1}^2}{qJ_7^5} \qquad t:=q\frac{J_{7,1}^3}{J_{7,2}^2J_{7,3}}.&
\end{flalign*}
~
\begin{flalign*}
&P\cdot G^3\cdot N_7(1,0)=-\frac{6}{7}t^{-1}+\frac{10}{7}-t+\frac{3}{7}t^2&\\
&P\cdot G^4\cdot N_7(1,1)=-\frac{5}{14}t^{-1}-\frac{11}{7}+2t-\frac{1}{14}t^2,&\\
&P\cdot G^5\cdot N_7(1,2)=-\frac{17}{14}t^{-1}+\frac{6}{7}+3t-\frac{37}{14}t^2,&\\
&P\cdot G^6\cdot N_7(1,3)=-\frac{1}{14}t^{-1}+\frac{19}{14}-\frac{16}{7}t+\frac{11}{14}t^2+\frac{3}{14}t^3,&\\
&P\cdot G^7\cdot N_7(1,4)=-\frac{9}{7}t^{-1}+\frac{38}{7}-\frac{64}{7}t+\frac{50}{7}t^2-\frac{15}{7}t^3,&\\
&P\cdot G^8\cdot N_7(1,5)=-t^{-1}+1+3t-5t^2+2t^3,&\\
&P\cdot G^9\cdot N_7(1,6)=-\frac{27}{14}t^{-1}+\frac{67}{7}-\frac{243}{14}t+\frac{195}{14}t^2-\frac{31}{7}t^3+\frac{3}{14}t^4,&
\end{flalign*}
~
\begin{flalign*}
&P\cdot G^3\cdot N_7(2,0)=-\frac{3}{14}t^{-1}-\frac{22}{7}+5t-\frac{23}{14}t^2,&\\
&P\cdot G^4\cdot N_7(2,1)=-\frac{3}{14}t^{-1}+\frac{13}{7}-3t+\frac{19}{14}t^2,&\\
&P\cdot G^5\cdot N_7(2,2)=-\frac{3}{7}t^{-1}+\frac{5}{7}-t+\frac{5}{7}t^2,&\\
&P\cdot G^6\cdot N_7(2,3)=-\frac{15}{7}t^{-1}+\frac{40}{7}-\frac{39}{7}t+\frac{18}{7}t^2-\frac{4}{7}t^3,&\\
&P\cdot G^7\cdot N_7(2,4)=-\frac{1}{14}t^{-1}-\frac{23}{14}+\frac{40}{7}t-\frac{87}{14}t^2+\frac{31}{14}t^3,&\\
&P\cdot G^8\cdot N_7(2,5)=-2t^{-1}+9-15t+11t^2-3t^3,&\\
&P\cdot G^9\cdot N_7(2,6)=\frac{37}{14}t^{-1}-\frac{90}{7}+\frac{333}{14}t-\frac{289}{14}t^2+\frac{57}{7}t^3-\frac{15}{14}t^4,&
\end{flalign*}
~
\begin{flalign*}
&P\cdot G^3\cdot N_7(3,0)=-\frac{1}{14}t^{-1}+\frac{9}{7}-3t+\frac{25}{14}t^2,&\\
&P\cdot G^4\cdot N_7(3,1)=-\frac{1}{14}t^{-1}-\frac{12}{7}+6t-\frac{59}{14}t^2,&\\
&P\cdot G^5\cdot N_7(3,2)=-\frac{1}{7}t^{-1}+\frac{18}{7}-5t+\frac{18}{7}t^2,&\\
&P\cdot G^6\cdot N_7(3,3)=-\frac{3}{14}t^{-1}-\frac{41}{14}+\frac{50}{7}t-\frac{65}{14}t^2+\frac{9}{14}t^3,&\\
&P\cdot G^7\cdot N_7(3,4)=-\frac{20}{7}t^{-1}+\frac{79}{7}-\frac{115}{7}t+\frac{73}{7}t^2-\frac{17}{7}t^3,&\\
&P\cdot G^8\cdot N_7(3,5)=\frac{7}{12}t^{-2}-\frac{2}{3}t^{-1}-\frac{5}{3}+\frac{19}{6}t-t^2-t^3+\frac{7}{12}t^4,&\\
&P\cdot G^9\cdot N_7(3,6)=-\frac{23}{7}t^{-1}+\frac{96}{7}-\frac{158}{7}t+\frac{128}{7}t^2-\frac{51}{7}t^3+\frac{8}{14}t^4,&
\end{flalign*}
~
\begin{flalign*}
&P\cdot G^3\cdot M_7(1,0)=-\frac{5}{14}t^{-1}-\frac{4}{7}-t+\frac{27}{14}t^2,&\\
&P\cdot G^4\cdot M_7(1,1)=-\frac{6}{7}t^{-1}+\frac{3}{7}+2t-\frac{11}{7}t^2,&\\
&P\cdot G^5\cdot M_7(1,2)=\frac{2}{7}t^{-1}+\frac{13}{7}-4t+\frac{13}{7}t^2,&\\
&P\cdot G^6\cdot M_7(1,3)=-\frac{1}{14}t^{-1}+\frac{19}{14}-\frac{16}{7}t+\frac{11}{14}t^2+\frac{3}{14}t^3,&\\
&P\cdot G^7\cdot M_7(1,4)=\frac{3}{14}t^{-1}-\frac{29}{14}+\frac{27}{7}t-\frac{33}{14}t^2+\frac{5}{14}t^3,&\\
&P\cdot G^8\cdot M_7(1,5)=-t^{-1}+8-18t+16t^2-5t^3,&\\
&P\cdot G^9\cdot M_7(1,6)=\frac{71}{14}t^{-1}-\frac{129}{7}+\frac{345}{14}t-\frac{197}{14}t^2+\frac{18}{7}t^3+\frac{3}{14}t^4,&
\end{flalign*}
~
\begin{flalign*}
&P\cdot G^3\cdot M_7(2,0)=-\frac{3}{14}t^{-1}+\frac{27}{7}-2t-\frac{23}{14}t^2,&\\
&P\cdot G^4\cdot M_7(2,1)=-\frac{3}{14}t^{-1}+\frac{13}{7}-3t+\frac{19}{14}t^2,&\\
&P\cdot G^5\cdot M_7(2,2)=-\frac{27}{14}t^{-1}+\frac{47}{7}-8t+\frac{45}{14}t^2,&\\
&P\cdot G^6\cdot M_7(2,3)=\frac{5}{14}t^{-1}+\frac{3}{14}-\frac{18}{7}t+\frac{43}{14}t^2-\frac{15}{14}t^3,&\\
&P\cdot G^7\cdot M_7(2,4)=-\frac{11}{8}t^{-1}+\frac{41}{7}-\frac{51}{7}t+\frac{23}{7}t^2-\frac{2}{7}t^3,&\\
&P\cdot G^8\cdot M_7(2,5)=5t^{-1}-19+27t-17t^2+4t^3,&\\
&P\cdot G^9\cdot M_7(2,6)=-\frac{13}{7}t^{-1}+\frac{50}{7}-\frac{68}{7}t+\frac{34}{7}t^2+\frac{1}{7}t^3-\frac{4}{7}t^4,&
\end{flalign*}
~
\begin{flalign*}
&P\cdot G^3\cdot M_7(3,0)=-\frac{1}{14}t^{-1}+\frac{9}{7}-3t+\frac{25}{14}t^2,&\\
&P\cdot G^4\cdot M_7(3,1)=-\frac{1}{14}t^{-1}+\frac{37}{7}-8t+\frac{39}{14}t^2,&\\
&P\cdot G^5\cdot M_7(3,2)=-\frac{1}{7}t^{-1}-\frac{31}{7}+9t-\frac{31}{7}t^2,&\\
&P\cdot G^6\cdot M_7(3,3)=-\frac{19}{7}t^{-1}+\frac{67}{7}-\frac{69}{7}t+\frac{13}{7}t^2+\frac{8}{7}t^3,&\\
&P\cdot G^7\cdot M_7(3,4)=\frac{29}{7}t^{-1}-\frac{68}{7}+\frac{32}{7}t+\frac{24}{7}t^2-\frac{17}{7}t^3,&\\
&P\cdot G^8\cdot M_7(3,5)=-3t^{-1}+10-12t+6t^2-t^3,&\\
&P\cdot G^9\cdot M_7(3,6)=\frac{17}{14}t^{-1}-\frac{44}{7}+\frac{153}{7}t-\frac{101}{14}t^2+\frac{5}{7}t^3-\frac{9}{14}t^4.&
\end{flalign*}
\end{appendix}

\bibliographystyle{plain}
\bibliography{ref}


%
%









\end{document}